\documentclass[11pt]{article}
\usepackage{amssymb,amsmath,latexsym}
\usepackage{amsthm}
\usepackage[shortlabels]{enumitem}
\usepackage{caption}
\usepackage{xcolor}
\usepackage{float}
\usepackage{graphicx}
\usepackage{arydshln}
\usepackage[utf8]{inputenc}
\usepackage[english]{babel}
\usepackage[T2A]{fontenc}
\usepackage{tikz}

\usepackage{tabularx} 
\usepackage{ytableau}
\usepackage{colortbl} 
\usepackage{url}
\usepackage{breakurl}
\usepackage[colorlinks=true,citecolor=blue,linkcolor=blue,urlcolor=blue,bookmarks,bookmarksopen,bookmarksdepth=2,backref=page,breaklinks]{hyperref}

\usepackage{bookmark}
\bookmarksetup{
	numbered, 
	open,
}

\renewcommand*{\backref}[1]{}
\renewcommand*{\backrefalt}[4]{%
	\ifcase #1 (Not cited.)%
	\or        (Cited on page~#2.)%
	\else      (Cited on pages~#2.)%
	\fi}

\theoremstyle{plain}
\newtheorem{theorem}{Theorem}[section]
\newtheorem{lemma}[theorem]{Lemma}
\newtheorem{proposition}[theorem]{Proposition}

\newtheorem{corollary}[theorem]{Corollary}

\theoremstyle{definition}

\newtheorem{example}[theorem]{Example}

\newtheorem{definition}[theorem]{Definition}
\newtheorem{remark}[theorem]{Remark}

\newtheorem{notation}[theorem]{Notation}

\numberwithin{theorem}{subsection}
\numberwithin{equation}{section}
\numberwithin{table}{section}
\numberwithin{figure}{section}
\numberwithin{openproblem}{section}

\begin{document}
\title{A bijection between peakless Motzkin paths and LR tableaux}

\author{Milan Tenn\vspace{0.4cm} \ \\
Swarthmore College \\\tt mtenn1@swarthmore.edu\vspace{0.4cm}
}

\date{\today}
\maketitle
\abstract{We prove the conjecture of Donnelly et al. that 
a certain class of Littlewood-Richardson tableaux are equinumerous with peakless Motzkin paths of length $n$. Furthermore, we construct an explicit bijection between this class of tableaux and peakless Motzkin paths of length $n$ for all $n\ge 1$.}\ \\[2mm]
\noindent\textbf{Keywords:} Peakless Motzkin path, skew-shaped semistandard tableau, Littlewood-Richardson rule.\\

\noindent\textbf{Mathematics Subject Classification:} 05A19, 05A05.

\thispagestyle{empty}
\section{Introduction}
Young tableaux are a well-studied class of combinatorial objects. One particular type of tableau which is of interest is Littlewood-Richardson tableaux, or LR tableaux. Both Young tableaux in general and LR tableaux in particular are highly related to representation theory. For instance, LR tableaux correspond to the Littlewood-Richardson rule. This rule determines the multiplication of Schur polynomials, which form a basis of the symmetric polynomials and which frequently appear in problems relating to representation theory. These classes of tableaux have been investigated in many papers, including~\cite{youngtableauexpository,leeuwen2001,pak2005}.

This paper will investigate a class of tableaux called Fibonacci ribbons. Fibonacci ribbons are a class of skew-shaped semistandard tableaux which were introduced in~\cite{tableauconjecturepaper} in relation to representations of special linear Lie algebras. Fibonacci ribbons which are also LR tableaux are called ballot-admissible. Based on some empirical numerical evidence, it was conjectured in~\cite{tableauconjecturepaper} that a certain class of ballot-admissible Fibonacci ribbons are in bijection with a class of lattice walks called peakless Motzkin paths. This claim was also investigated with additional numerical evidence in~\cite{malonepaper}.  It is known that certain other classes of tableaux are related to classes of lattice walks, for instance as shown in~\cite{drube2017,watanabe1988}.

Peakless Motzkin paths are a themselves a well-studied class of lattice walks. These paths and some variants have been investigated significantly in papers including~\cite{evanspaper,grandgraycodes,markedlevelsteps,asamoahrnabijectionorigin,grandtrees}. This investigation has demonstrated a number of interesting applications for peakless Motzkin paths, including to the enumeration of RNA secondary structures~\cite{asamoahearlypaper}. As a result, the conjecture that peakless Motzkin paths are equinumerous with ballot-admissible Fibonacci ribbons creates a new connection between the mathematics of RNA secondary structure prediction and combinatorial representation theory.

In this paper, we confirm that the conjecture posed in~\cite{tableauconjecturepaper} is correct. We first prove that the class of ballot-admissible Fibonacci ribbons described in~\cite{tableauconjecturepaper} is in fact equinumerous with peakless Motzkin paths. In addition, we construct an explicit bijection between these ballot-admissible Fibonacci ribbons and corresponding peakless Motzkin paths.

This paper is organized in the following way. In Section~\ref{sec:background}, we introduce some background and preliminary results about both peakless Motzkin paths and Fibonacci ribbons. In Section~\ref{sec:bijection}, we construct an explicit bijection between ballot-admissible ribbons and peakless Motzkin paths. Finally, in Section~\ref{sec:future}, we discuss conclusions and ideas regarding future work.

\section{Background and Preliminaries}\label{sec:background}
\subsection{Skew-Shaped Tableaux}
In this subsection, we introduce definitions for skew-shaped semistandard tableaux and Littlewood-Richardson tableaux. We also define a matching between entries of Littlewood-Richardson tableaux.

\begin{definition}
    Let $[n]$ be the set $\{1,2,\ldots,n\}$. 
\end{definition}

\begin{definition}
    For $n_1 < n_2$, let $[n_1,n_2]$ be the set $\{n_1,n_1+1,\ldots,n_2\}$. 
\end{definition}

\begin{definition}
    Let $P\in \mathbb{Z}^n$, where $P=(p_1,\ldots,p_n)$ for $p_1 \ge p_2\ge \cdots \ge p_n\ge 0$. We define the Young diagram for $P$ as having $n$ rows, where the $i$th row has $p_i$ left-justified boxes. 
\end{definition}

\begin{definition}
    Consider some $P,Q\in\mathbb{Z}^n$ where both $P$ and $Q$ correspond to Young diagrams and where for all $i\in [n]$, $p_i > q_i$. We define the shape $P/Q$ by taking the Young diagram for $P$ and removing the boxes corresponding to the Young diagram for $Q$. We do so in each row $i$ by removing the first $q_i$ boxes from the left.
\end{definition}

\begin{definition}
    Given some $P/Q$, we define a skew-shaped tableau with shape $P/Q$ by assigning some positive integer value to every box in $P/Q$.
\end{definition}

\begin{definition}
    We call a tableau semistandard if its rows are weakly increasing from left to right, and its columns are strictly increasing from top to bottom.
\end{definition}

\begin{definition}
    The linear form of a tableau $t$ is the word formed from the entries of $t$ read from top to bottom and right to left.
\end{definition}

\begin{example}
    The linear form of the semistandard tableau \begin{ytableau}
            \none & \none & 1\\
            1 & 1 & 2\\
            3
    \end{ytableau} is $(12)(1)(13)=12113$.
\end{example}

\begin{definition}
    Let $w$ be some word of length $k$ formed from letters in $[n]$. For $1\le i\le k$, let $w(i)$ be the $i$-th letter of $w$ read from left to right.
\end{definition}

\begin{definition}
    Let $w$ be some word of length $k$ formed from letters in $[n]$. Let $S\subseteq [k]$ where $S=\{s_1,\ldots,s_j\}$ and $s_1 < \cdots < s_j$. Then, $w(S)=w(s_1)\cdots w(s_j)$.
\end{definition}

\begin{definition}
    Given some $i\in\mathbb{Z}_+$ and some word $w$, let $\mu_i(w)$ be the number of occurences of $i$ in $w$. For some tableau $t$, let $\mu_i(t)=\mu_i(w)$ where $w$ is the linear form of $t$.
\end{definition}

\begin{definition}
    Let $t$ be some skew-shaped semistandard tableau with entries in $[n]$. Let $w$ be the linear form of $t$. We say that $t$ is a Littlewood-Richardson tableau if for all prefixes $w'$ of $w$ and all $i\in [n-1]$, $\mu_i(w') \ge \mu_{i+1}(w')$.
\end{definition}

\begin{definition}\label{def:matching}
    Let $w$ be some word with letters in $[n]$. Let $w(x)=i+1$ for some $i\in [n-1]$. Let $y=b_i(w,x)$ be defined as the maximal value such that $y \le x$ and for $w'=w([y,x])$, $\mu_i(w')=\mu_{i+1}(w')$.
\end{definition}

\begin{remark}
    It is easy to see that for any $y=b_i(w,x)$, $w(y)=i$. 
    Since it is possible that no such $w'$ exists, $b_i$ induces a partial function from $i+1$ letters to $i$ letters in any word $w$. 
\end{remark}

\begin{proposition}\label{prop:binjective}
    Let $w$ be some word with letters in $[n]$. For all $i\in [n-1]$, $x\mapsto b_i(w,x)$ is injective.
\end{proposition}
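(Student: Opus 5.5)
We argue by contradiction with a balance (height) function. Fix $i\in[n-1]$ and define, for $1\le y\le x\le k$,
\[
D(y,x)=\mu_i(w([y,x]))-\mu_{i+1}(w([y,x])),
\]
so $b_i(w,x)$ is the largest $y\le x$ with $D(y,x)=0$. If $w(x)=i+1$, then $D(x,x)=-1$. As $y$ decreases by one, $D(y,x)$ changes by $+1$ (when $w(y)=i$), by $-1$ (when $w(y)=i+1$), or by $0$. Maximality of $y=b_i(w,x)$ therefore gives $D(z,x)<0$ for all $y<z\le x$. This holds because $D(\cdot,x)$ starts at $-1$ at $z=x$, moves in steps of size at most one, and does not reach $0$ before $z=y$. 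As the remark notes, this also forces $w(y)=i$: the last step, from $z=y+1$ to $y$, must go from $-1$ up to $0$.

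Now suppose $x_1<x_2$ are both positions with letter $i+1$, and $b_i(w,x_1)=b_i(w,x_2)=y$. Then $y\le x_1<x_2$, and in fact $y<x_1$, since $w(y)=i\neq i+1=w(x_1)$. We split the interval $[y,x_2]$ at $x_1$:
\[
D(y,x_2)=D(y,x_1)+D(x_1+1,x_2).
\]
Both $D(y,x_2)$ and $D(y,x_1)$ are zero, so $D(x_1+1,x_2)=0$.

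This splitting is the key step. There are two cases. If $x_1+1\le x_2$, then $z=x_1+1$ satisfies $y<z\le x_2$, so the maximality observation applied to $x_2$ gives $D(x_1+1,x_2)<0$. This contradicts $D(x_1+1,x_2)=0$. The endpoint case needs no separate treatment: $x_1+1\le x_2$ always holds because $x_1<x_2$. Hence $x_1=x_2$, and $x\mapsto b_i(w,x)$ is injective on the positions where it is defined.

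The only delicate point is the claim that $D(z,x)<0$ for all $y<z\le x$. I expect to justify it by the discrete intermediate value argument above: $D(\cdot,x)$ begins at $-1$, changes by at most one per step, and a zero at some $z>y$ would contradict the maximality of $y$.
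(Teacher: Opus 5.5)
Your proof is correct and follows the same route as the paper: you split $[y,x_2]$ at $x_1$, deduce that $w([x_1+1,x_2])$ is balanced, and contradict the maximality of $y=b_i(w,x_2)$, since $x_1+1>y$. Your intermediate-value strengthening to $D(z,x)<0$ is valid but not needed, because maximality alone already rules out $D(x_1+1,x_2)=0$.
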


\begin{proof}
    Suppose for the sake of contradiction that for $x_1 < x_2$, $b=b_i(w,x_1)=b_i(w,x_2)$. Then, by definition, for $w_1=w([b,x_1])$ and $w_2=w([b,x_2])$, $\mu_i(w_1)=\mu_{i+1}(w_1)$ and $\mu_i(w_2)=\mu_{i+1}(w_2)$. Then, since $x_1 < x_2$, we find that $[b,x_2]-[b,x_1]=[x_1+1,x_2]$ for $x_1 + 1 \le x_2$. 
    
    Since $\mu_i(w_1)=\mu_{i+1}(w_1)$ and $\mu_i(w_2)=\mu_{i+1}(w_2)$, $\mu_i(w([x_1+1,x_2]))=\mu_{i+1}(w([x_1+1,x_2]))$. However, $b \le x_1 < x_1+1$, and $b$ is the maximal value where $w([b,x_2])$ has an equal number of $i$ and $i+1$ letters. As such, this is impossible, and we conclude that for $x_1 < x_2$, $b_i(w,x_1)\ne b_i(w,x_2)$. This is sufficient to show that $x\mapsto b_i(w,x)$ is injective as desired.
\end{proof}

\begin{proposition}\label{prop:bsufficient}
    Let $w$ be some word with letters in $[n]$. Let $w(x)=i+1$ for some $i\in [n-1]$. Suppose that for $w'=w([y,x])$, $\mu_i(w')\ge \mu_{i+1}(w')$. Then, $b_i(w,x)$ is well-defined, and $y\le b_i(w,x)$.
\end{proposition}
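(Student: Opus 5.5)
Consider the function $f(z)=\mu_i(w([z,x]))-\mu_{i+1}(w([z,x]))$ for $z\in[y,x]$, and also set $f(x+1)=0$, corresponding to the empty interval. Then $b_i(w,x)$ is by definition the largest $z\le x$ with $f(z)=0$, so it suffices to find some $z\in[y,x]$ with $f(z)=0$. Any such $z$ shows that $b_i(w,x)$ is well-defined, and by maximality gives $y\le z\le b_i(w,x)$.

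The key observation is that $f$ changes by at most one per step. As $z$ decreases by one, the interval gains exactly one letter $w(z)$, so
\[
f(z)-f(z+1)\in\{-1,0,1\},
\]
where the difference is $+1$ if $w(z)=i$, $-1$ if $w(z)=i+1$, and $0$ otherwise. At the right end, $f(x)=-1$ because $w(x)=i+1$. At the left end, $f(y)\ge 0$ by hypothesis.

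We then apply a discrete intermediate value argument to the integer sequence $f(x),f(x-1),\ldots,f(y)$. It starts at $-1$, ends at a value $\ge 0$, and changes by at most $1$ at each step, so it must take the value $0$ at some $z\in[y,x-1]$. This completes the argument.

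The only point needing care is the endpoint bookkeeping. We must ensure the zero found lies in $[y,x]$ and not at $z=x+1$. This holds because $f(x)=-1\neq 0$ and the sequence is only examined from $z=x$ downward. The rest is routine.
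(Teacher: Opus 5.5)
Your proof is correct and follows the paper's argument essentially step for step. The paper uses the same function $f(z)=\mu_i(w([z,x]))-\mu_{i+1}(w([z,x]))$, the same bound $|f(z-1)-f(z)|\le 1$, the endpoint values $f(x)=-1$ and $f(y)\ge 0$, and the discrete intermediate value step to find a zero in $[y,x-1]$; it then concludes by maximality of $b_i(w,x)$, and your convention $f(x+1)=0$ is harmless but not needed.
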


\begin{proof}
    For $z\le x$, let $w_z = w([z,x])$. Let $f(z)=\mu_i(w_z)-\mu_{i+1}(w_z)$. Then, since $w(z-1)$ gives us at most one occurence of either $i$ or $i+1$, $|f(z-1)-f(z)|\le 1$. This means that given $z_1 < z_2$, $f$ must achieve all values between $f(z_1)$ and $f(z_2)$ in $[z_1,z_2]$. 
    
    Now, we note that $f(x)=-1$ since $w(x)=i+1$ by definition. Finally, we know that $w_y=w'$ has $f(y)\ge 0$. From this, we conclude that there exists some $z\in [y,x-1]$ such that $f(z)=0$. This is sufficient to conclude that $b_i(w,x)$ is well-defined. By maximality of $b_i(w,x)$, we know that $b_i(w,x)\ge z\ge y$.
\end{proof}

\begin{theorem}\label{thm:lrcondition}
    Let $t$ be some skew-shaped semistandard tableau with entries in $[n]$. Let $w$ be the linear form of $t$. Then, $t$ is a Littlewood-Richardson tableau iff $\forall i\in [n-1]$, $x\mapsto b_i(w,x)$ is a function. 
\end{theorem}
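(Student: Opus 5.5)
We read the statement ``$x\mapsto b_i(w,x)$ is a function'' as saying that $b_i(w,x)$ is well-defined for every position $x$ with $w(x)=i+1$. The plan is to show that, for each fixed $i\in[n-1]$, both sides are equivalent to the same prefix condition. Throughout, for a position $x$ we set $g(z)=\mu_i(w([1,z]))-\mu_{i+1}(w([1,z]))$, with $g(0)=0$. The LR condition for this $i$ says exactly that $g(z)\ge 0$ for all $z$. Also, for $y\le x$,
\[
\mu_i(w([y,x]))-\mu_{i+1}(w([y,x]))=g(x)-g(y-1),
\]
so the defining condition of $b_i(w,x)$ becomes $g(y-1)=g(x)$. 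Since $g$ changes by at most $1$ at each step, every question about $b_i$ turns into a question about this walk.

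For the forward direction, assume $t$ is LR and fix $x$ with $w(x)=i+1$. We apply Proposition~\ref{prop:bsufficient} with $y=1$: the segment $w([1,x])$ is a prefix, so $\mu_i\ge\mu_{i+1}$ on it by the LR hypothesis. The proposition then says that $b_i(w,x)$ is well-defined, so $b_i(w,\cdot)$ is a function on all $i+1$ letters.

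For the converse, suppose $t$ is not LR, so $g(z)<0$ for some $z$ and some $i$. Let $x$ be the first such position. Since $g$ starts at $0$ and moves in unit steps, $g(x)=-1$, $g(x-1)=0$, and therefore $w(x)=i+1$. We claim $b_i(w,x)$ is undefined. Any $y\le x$ realizing it would need $g(y-1)=g(x)=-1$ with $y-1<x$. But $g\ge 0$ on $[0,x-1]$ by minimality of $x$, so no such $y$ exists. Hence $b_i(w,\cdot)$ fails to be a function.

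The only delicate point is interpretive: we must fix that ``is a function'' means total on the $i+1$ letters. The remark after Definition~\ref{def:matching} calls $b_i$ a partial function, which supports this reading. With that fixed, the argument is just the prefix-sum reformulation together with a first-violation argument. Semistandardness is not needed; the argument works for any word $w$.
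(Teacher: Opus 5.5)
Your proof is correct. The direction LR $\Rightarrow$ total is the same as the paper's: you apply Proposition~\ref{prop:bsufficient} with $y=1$ to the prefix $w([1,x])$.

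For the other direction you take a different route. The paper argues directly. Since $w(x)=i+1$ forces $b_i(w,x)<x$, totality of $b_i$ together with its injectivity (Proposition~\ref{prop:binjective}) gives, inside every prefix $w'$, an injection from the $i+1$ letters of $w'$ into the $i$ letters of $w'$. Hence $\mu_{i+1}(w')\le\mu_i(w')$.

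You argue by contrapositive with the prefix-sum walk $g$. At the first time $x$ where $g$ goes negative, you get $w(x)=i+1$ and $g(x)=-1$, and there is no earlier time at level $-1$. So the condition $g(y-1)=g(x)$ has no solution.

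What your route buys:
\begin{enumerate}
\item It is self-contained and does not need the injectivity lemma.
\item The reformulation $b_i(w,x)=1+\max\{z\in[0,x-1] : g(z)=g(x)\}$ makes both Proposition~\ref{prop:binjective} and Proposition~\ref{prop:bsufficient} essentially immediate.
\item It names exactly which $i+1$ letter is left unmatched.
\end{enumerate}

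What the paper's route buys: it phrases the LR condition as an injective matching of each $i+1$ to an earlier $i$. That is the viewpoint it exploits later, when $b_i$ is reinterpreted as a noncrossing system of bonds.

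As you note, neither argument uses semistandardness.
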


\begin{proof}
    First, suppose that for all $i\in [n-1]$, $x\mapsto b_i(w,x)$ is a function. Then, let $w'$ be some prefix of $w$. Since $b_i(w,x) < x$, we see that each $b_i$ induces a function from $i+1$ letters to preceding $i$ letters. As such, each $b_i$ induces a function from $i+1$ letters in $w'$ to $i$ letters in $w'$. Since each $b_i$ induces an injective function by Proposition~\ref{prop:binjective}, we conclude that $\mu_{i+1}(w')\le \mu_i(w')$ for all $i\in [n-1]$ as desired. As such, $t$ is a Littlewood-Richardson tableau.

    Next, suppose that $t$ is a Littlewood-Richardson tableau. Then, let $w(x)=i$. Let $w'=w([1,x])$ be the prefix of $w$ which ends on $w(x)$. By definition of a Littlewood-Richardson tableau, $\mu_i(w')\ge \mu_{i+1}(w')$. By Proposition~\ref{prop:bsufficient}, it follows that $b_i(w,x)$ is well-defined as desired. Thus, we see that $x\mapsto b_i(w,x)$ is a function for all $i\in [n-1]$ iff $t$ is a Littlewood-Richardson tableau.
\end{proof}

\begin{remark}
    This is a generalization of the matching between $i$ and $i+1$ entries given in~\cite[Section 3.1]{leeuwen2001}. Instead of a matching induced by Dyck words of $i$ and $i+1$ letters, we have Dyck prefixes.
\end{remark}

\subsection{Fibonacci Ribbons}
In this section, we define Fibonacci ribbons to be $\text{Fib}(3,k)$ ribbons as given in~\cite{tableauconjecturepaper}.

\begin{definition}
    Let $n\ge 1$. For $P=(n,n-1,n-3,\ldots)$ and $Q=(n-2,n-4,\ldots)$ where we let $P$ and $Q$ decrease to zero rather than becoming negative, we call the shape $P/Q$ the Fibonacci ribbon shape of length $n$.
\end{definition}

\begin{example}
    The Fibonacci ribbon shapes of length 1, 2, 3 and 4 are as shown below.
    \begin{center}
        \begin{ytableau}
            ~
        \end{ytableau} \begin{ytableau}
            ~ & ~\\
            ~
        \end{ytableau} \begin{ytableau}
            \none & ~ & ~\\
            ~ & ~\\
        \end{ytableau} \begin{ytableau}
            \none & \none & ~ & ~\\
            ~ & ~ & ~\\
            ~
        \end{ytableau}
    \end{center}
\end{example}

\begin{definition}
    Let $n\ge 1$. For $P=(n,n,n-2,n-4,\ldots)$ and $Q=(n-1,n-3,\ldots)$ where we let $P$ and $Q$ decrease to zero rather than becoming negative, we call the shape $P/Q$ the Fibonacci dual ribbon shape of length $n$.
\end{definition}

\begin{example}
    The Fibonacci dual ribbon shapes of length 1, 2, 3 and 4 are as shown below.
    \begin{center}
        \begin{ytableau}
            ~\\
            ~
        \end{ytableau} \begin{ytableau}
            \none & ~\\
            ~ & ~
        \end{ytableau} \begin{ytableau}
            \none & \none & ~\\
            ~ & ~ & ~\\
            ~
        \end{ytableau} \begin{ytableau}
            \none & \none & \none & ~\\
            \none & ~ & ~ & ~\\
            ~ & ~
        \end{ytableau}
    \end{center}
\end{example}

\begin{proposition}
    The Fibonacci ribbon shape of length $n$ has exactly $n$ columns. If $i\ge 1$ is odd, then the $i$-th column from right to left has 1 box. If $i$ is even, then the $i$-th column has 2 boxes.
\end{proposition}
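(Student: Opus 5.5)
The plan is to describe the shape row by row and then read off the column occupancy directly. By definition, row $j$ of $P/Q$ consists of the boxes in columns $q_j+1,\ldots,p_j$. Here $P=(n,n-1,n-3,n-5,\ldots)$, so $p_1=n$ and $p_j=n-2j+3$ for $j\ge 2$, truncated at zero. Likewise $Q=(n-2,n-4,\ldots)$, so $q_j=n-2j$, truncated at zero. The nonzero rows therefore occupy the following columns:
\begin{itemize}
\item row $1$ occupies columns $n-1$ and $n$;
\item row $j\ge 2$ occupies columns $n-2j+1$, $n-2j+2$ and $n-2j+3$, with the leftmost bound replaced by $1$ once $n-2j+1\le 0$.
\end{itemize}
I will treat the final, truncated rows separately according to the parity of $n$. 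These small end cases are the only real obstacle; the rest is bookkeeping.

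Next I re-index columns from right to left, setting $i=n-c+1$ for the column with left-to-right index $c$. In this indexing, row $1$ covers $i\in\{1,2\}$ and row $j\ge 2$ covers $i\in\{2j-2,\,2j-1,\,2j\}$. Consecutive rows $j$ and $j+1$ overlap in exactly one index, namely $i=2j$. Consider an even index $i=2j$ with $j\ge 1$: it lies in row $j$ as that row's last column and in row $j+1$ as that row's first column, and in no other row, so the column has exactly $2$ boxes. Now consider an odd index $i=2j-1$: it lies only in row $j$. For $j=1$ it belongs to row $1$ alone, and for $j\ge2$ it is the middle column of row $j$. So the column has exactly $1$ box. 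This is exactly the claimed pattern.

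It remains to confirm that the columns run exactly through $i=1,\ldots,n$ and that truncation does not break the pattern at the left end. The smallest left-to-right column index is $c=1$, which gives $i=n$, so there are exactly $n$ columns provided every $c\in[n]$ is actually covered.
\begin{itemize}
\item \textbf{$n$ odd}, say $n=2m+1$. Row $m+1$ has $q_{m+1}=0$ (truncated from $-1$) and $p_{m+1}=2$, so it covers $c\in\{1,2\}$, i.e.\ $i\in\{2m,\,2m+1\}=\{n-1,n\}$. Row $m+2$ has $p=0$ and is empty. Consequently $i=n$ (odd) has one box, and $i=n-1$ (even) is shared between rows $m$ and $m+1$, giving two boxes.
\item \textbf{$n$ even}, say $n=2m$. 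Row $m+1$ has $p_{m+1}=1$ and $q_{m+1}=0$, so it is the single box $c=1$, i.e.\ $i=n$ (even). This box sits beneath the last box of row $m$, giving two boxes in that column, as required.
\end{itemize}
The degenerate cases $n=1$ and $n=2$ also need a direct check, since there the formula $p_1=n$ and the general-row formula interact differently; both are immediate from the pictures already given.
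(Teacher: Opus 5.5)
Your computation is correct. It differs from the paper only in that the paper gives no argument here: its ``proof'' just defers to \cite{tableauconjecturepaper}. You verify the claim directly. You write row $j$ as the column interval $[q_j+1,p_j]$ and reindex right to left, so that row $1$ is $\{1,2\}$ and row $j\ge 2$ is $\{2j-2,2j-1,2j\}$. You then observe that consecutive rows meet in exactly the even index $2j$. The citation buys brevity. Your version makes the paper self-contained on a structural fact it relies on (e.g.\ in Proposition~\ref{prop:ribboncondition} and in the definition of the dual $r^*$). The same bookkeeping also proves the companion dual-ribbon proposition, which the paper likewise only asserts.

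You can shorten your end-case analysis. Every column index satisfies $c\ge 1$, so the condition $\max(q_j,0)<c\le\max(p_j,0)$ is equivalent to $q_j<c\le p_j$. Truncating at zero therefore just intersects the untruncated intervals with $i\le n$. Hence your generic overlap argument already handles the left end for both parities. In particular $n=2$ is not degenerate, and even $n=1$ fits once row $1$ is read as $\{1,2\}\cap[1,n]$.
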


\begin{proof}
    This is given in~\cite{tableauconjecturepaper}.
\end{proof}

\begin{proposition}
    The Fibonacci dual ribbon shape of length $n$ has exactly $n$ columns. If $i\ge 1$ is odd, then the $i$-th column from right to left has 2 boxes. If $i$ is even, then the $i$-th column has 1 box.
\end{proposition}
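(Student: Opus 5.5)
The plan is a direct computation of row lengths from the definitions of $P$ and $Q$, converting row information into column information; an induction on $n$ would also work. For the Fibonacci dual ribbon shape of length $n$, write $P=(p_1,p_2,\ldots)$ with $p_1=n$, $p_2=n$, and $p_j=n-2(j-2)$ for $j\ge 3$, truncated at zero. Write $Q=(q_1,q_2,\ldots)$ with $q_j=n-(2j-1)$, truncated at zero. For each row $j$ this gives the cell interval, indexed by column from the left, as $[q_j+1,\,p_j]$. Concretely, row $1$ occupies the single column $n$. For $j\ge 2$, row $j$ occupies columns $[\,n-2j+2,\; n-2j+4\,]$, truncated at column $1$: this holds because $p_j=n-2(j-2)=n-2j+4$ and $q_j+1=n-2j+2$. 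So each row from the second on has three boxes, except possibly the last nonempty row, which is cut off at the left.

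Next convert to columns indexed from the right, by setting $i=n-c+1$ for column $c$. Row $1$ covers $i=1$. Row $j\ge 2$ covers $i\in[2j-3,\,2j-1]$. Consecutive rows $j$ and $j+1$ therefore share exactly the column $i=2j-1$, which is odd, while the even column $i=2j-2$ lies only in row $j$. Row $1$ and row $2$ share $i=1$.

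Counting boxes per column then gives the result. Each odd $i=2j-1\le n$ lies in exactly two consecutive rows, namely $j$ and $j+1$. Each even $i$ lies in exactly one row. So odd columns have two boxes and even columns have one. The columns that occur are exactly $i=1,\ldots,n$: the union of the row intervals is $[1,n]$ because $c$ ranges down to $1$ and never exceeds $n$.

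The main obstacle is the boundary behaviour, where $p_j$ and $q_j$ are truncated at zero. I must check that when $n$ is odd, the last column $i=n$ still receives two boxes. Here $i=n=2j-1$ gives rows $j$ and $j+1$, and I need row $j+1$ to be nonempty, i.e. $p_{j+1}=n-2j+2=1>0$, which holds. I also need the left truncation $q_j=0$ not to remove or add any box. When $n$ is even, column $i=n$ should lie in exactly one row, and I will verify that the would-be next row is empty. This parity check, which could be confirmed against the displayed examples for $n=1,\ldots,4$, is the only delicate step. Alternatively, one can note that the dual shape is obtained from the Fibonacci ribbon shape of length $n+1$ by deleting its rightmost single box and shifting. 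Combined with the preceding proposition, this would flip the parity of every column index and give the claim immediately, but verifying that correspondence requires the same index bookkeeping.
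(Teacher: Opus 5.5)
Your proposal is correct and is essentially the paper's approach: the paper only asserts that the claim follows directly from the definitions of $P$ and $Q$, ``in the same way as for the Fibonacci ribbon shape'' (itself deferred to \cite{tableauconjecturepaper}), and your row-interval computation, with row $j\ge 2$ covering $i\in[2j-3,2j-1]$, is that verification written out. The boundary step you flag is not really delicate: for any column $c\ge 1$, replacing $p_j,q_j$ by $\max(p_j,0),\max(q_j,0)$ does not change whether $q_j<c\le p_j$, so the untruncated interval count is already exact on $[1,n]$ for both parities of $n$.
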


\begin{proof}
    This can be easily verified in the same way as for the Fibonacci ribbon shape.
\end{proof}

\begin{remark}
    Removing the leftmost column from a ribbon or dual ribbon shape of length $n$ yields the ribbon or dual ribbon shape of length $n-1$ respectively. Removing the rightmost column from the ribbon shape of length $n$ yields the dual ribbon of length $n-1$ and vice versa.
\end{remark}

\begin{definition}
    A Fibonacci ribbon or dual ribbon of length $n$ is a skew-shaped semistandard tableau of the Fibonacci ribbon or dual ribbon shape of length $n$ with entries in $\{1,2,3\}$. 
\end{definition}

\begin{definition}
    Let $r$ be some ribbon or dual ribbon of length $n$. For $1\le i\le n$, we define $r(i)$ to be the $i$-th column from right to left of $r$.
\end{definition}

\begin{definition}
    Let $r$ be some ribbon or dual ribbon of length $n$. For $1\le x\le y\le n$, let $r([x,y])$ be the ribbon formed from the columns $x\le i\le y$ of $r$.
\end{definition}

\begin{example}
    If $t$ is the tableau \begin{ytableau}
            \none & \none & 1\\
            1 & 1 & 2\\
            3
    \end{ytableau} then $t(1)=(12)$, $t(2)=(1)$ and $t(3)=(13)$. Furthermore, we observe that $t([1,2])=(12)(1)$.
\end{example}

\begin{proposition}\label{prop:ribboncondition}
    Let $r$ be a tableau of the Fibonacci ribbon or dual ribbon shape of length $n$. Let $w$ be the linear form of $r$. Then, $r$ is a Fibonacci ribbon or dual ribbon iff the following holds:
    \begin{enumerate}
        \item[(i)] For all $1\le i\le n$, either $r(i)=(j)$ or $r(i)$ has entries which are exactly $\{1,2,3\}-\{j\}$ in ascending order for some $j\in [3]$.
        \item[(ii)] The pattern $123$ does not occur in $w$.  
    \end{enumerate}
\end{proposition}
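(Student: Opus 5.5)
The plan is to reduce semistandardness of a filled Fibonacci (dual) ribbon shape to local conditions on adjacent columns, and then to check that, with entries in $\{1,2,3\}$, those local conditions are exactly (i) and (ii). First I would pin down the geometry. From the examples, the linear form is the column reading $w=r(1)r(2)\cdots r(n)$, each column read top to bottom (e.g.\ $(12)(1)(13)$). By the two propositions on column sizes, one-box and two-box columns alternate. Reading the pictured shapes, I expect:
\begin{itemize}
\item a two-box column $r(i)=(a,b)$, with $a$ above $b$, shares the row of its top entry $a$ with the one-box column to its left, $r(i+1)$;
\item it shares the row of its bottom entry $b$ with the one-box column to its right, $r(i-1)$.
\end{itemize}
Since each row is a contiguous run of boxes, weak row increase is equivalent to inequalities between horizontally adjacent boxes. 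So $r$ is semistandard iff
\begin{enumerate}
\item[(a)] every two-box column $(a,b)$ has $a<b$;
\item[(b)] for every one-box column $r(i+1)=(d)$ immediately to the left of a two-box column $r(i)=(a,b)$ we have $d\le a$;
\item[(c)] for every one-box column $r(i-1)=(c)$ immediately to the right of a two-box column $r(i)=(a,b)$ we have $b\le c$.
\end{enumerate}
I would verify this adjacency description by induction on $n$, using the remark that removing the leftmost column of a ribbon or dual ribbon of length $n$ gives the same kind of shape of length $n-1$. The main point is that the newly added leftmost column attaches in the correct row.

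Condition (a) is equivalent to (i). A one-box column is just some $(j)$. A strictly increasing two-box column with entries in $[3]$ consists of two distinct elements of $[3]$ in ascending order, which is exactly $\{1,2,3\}-\{j\}$ listed ascending for the unique missing $j$.

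Next, assuming (a), I would show that (b) and (c) together are equivalent to $w$ having no factor $123$, meaning three consecutive letters $1,2,3$ (the intended reading of ``pattern''). In $w$, a column $r(i)$ is immediately followed by $r(i+1)$. So the relevant adjacencies in $w$ are $(a\,b)(d)$ with $d=r(i+1)$ the left neighbour of $(a,b)$, and $(c)(a\,b)$ with $c=r(i-1)$ the right neighbour of $(a,b)$.
\begin{itemize}
\item A failure of the inequality between $(a,b)$ and $d$ means $d$ exceeds the entry it is compared with. Since $a<b\le 3$, this forces $(a,b)=(1,2)$ and $d=3$, so $w$ contains $1\,2\,3$ across those columns.
\item A failure of the inequality between $c$ and $(a,b)$ means $a>c$. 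This forces $c=1$ and $(a,b)=(2,3)$, again giving $1\,2\,3$.
\item Conversely, take an occurrence of $123$ in $w$. It spans at least two columns, since columns have at most two entries and are increasing. It cannot have the form $(\dots 1)(2)(3\dots)$: by alternation the $1$ would then be the bottom entry of a two-box column, and such an entry is at least $2$ under (a). So the occurrence is either $(1\,2)(3)$ or $(1)(2\,3)$, and each of these is precisely one of the two violations above.
\end{itemize}
Hence, given (a), the conjunction of (b) and (c) is equivalent to (ii), which completes the equivalence.

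The step I expect to be the main obstacle is the first one: getting correct, for both the ribbon and the dual ribbon, which row each entry of a two-box column shares with which neighbour. The orientation of the two inequalities, and so which comparisons appear in (b) and (c), depends on it. I would settle this carefully from the definition of $P/Q$ before doing the case check, since the case check itself is routine once the adjacencies are right.
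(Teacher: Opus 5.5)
\textbf{Verdict: a concrete but repairable error in the key geometric step.} Your overall strategy is the same as the paper's: (i) is column strictness, and weak row increase reduces to comparing each two-box column with its one-box neighbours, whose only failures are $(12)(3)$ and $(1)(23)$. However, the adjacency you expect is backwards, and as written it breaks the argument.

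\textbf{The adjacency.} In a skew shape $P/Q$, row $k$ occupies positions $q_k+1,\dots,p_k$. Both $q_k$ and $p_k$ weakly decrease as you go down. So in a two-box column $r(i)=(a,b)$, the \emph{top} entry $a$ shares its row with the \emph{right} neighbour $r(i-1)$, and the bottom entry $b$ shares its row with the left neighbour $r(i+1)$. For example, in the ribbon of length $4$, $r(2)$ occupies rows $1,2$, while $r(1)$ lies in row $1$ and $r(3)$ in row $2$.

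\textbf{Why your (b) and (c) fail.} With your orientation, (b) $d\le a$ and (c) $b\le c$ do not characterize semistandardness.
\begin{itemize}
\item Your own example $(12)(1)(13)$ is semistandard, yet (c) applied to $r(3)=(1,3)$ and $r(2)=(1)$ demands $3\le 1$.
\item The dual ribbon $(12)(2)$ satisfies (i) and (ii) but violates (b).
\end{itemize}
So the implication (i)$\wedge$(ii) $\Rightarrow$ semistandard cannot go through your (a)--(c). Your case analysis also does not follow from your own (b): a failure $d>a$ does not force $(a,b)=(1,2)$ and $d=3$, since $(12)(2)$ is another possibility.

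\textbf{The repair.} Use the comparisons $d\le b$ (left neighbour against bottom entry) and $a\le c$ (right neighbour against top entry). Their failures are $d>b$ and $a>c$, which is what your case check actually uses without saying so. With these:
\begin{itemize}
\item $d>b$ forces $(12)(3)$, and $a>c$ forces $(1)(23)$;
\item in the converse, your exclusion of $(\dots 1)(2)(3\dots)$ by alternation is correct.
\end{itemize}
The argument is then complete and coincides with the paper's proof. The paper's two pictured local configurations are exactly these two adjacencies: a vertical domino whose top box has a smaller entry to its right, and one whose bottom box has a larger entry to its left.
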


\begin{proof}
    Since $r$ is a tableau of the Fibonacci ribbon or dual ribbon shape, $r$ is a Fibonacci ribbon or dual ribbon iff $r$ is semistandard. We will show that (i) and (ii) hold if $r$ is semistandard. In addition, we will show that if $r$ is not semistandard and (i) holds, then (ii) does not hold.
    
    Suppose that $r$ is semistandard. Then, the entries of $r$ must be strictly increasing from top to bottom in each column. Since the allowed entries are $\{1,2,3\}$ and there are either 1 or 2 boxes in each column, (i) evidently holds. In addition, say that the pattern 123 occurs in $w$. Then, observe that this must correspond to successive columns of the form $(12)(3)$ or $(1)(23)$. In either case, there is a row of $r$ that decreases from left to right, which is impossible, as $r$ is semistandard. Thus, 123 cannot occur in $w$.

    Suppose that $r$ is not semistandard and (i) holds. Then, since (i) holds, columns must be strictly increasing from top to bottom. Since $r$ is not semistandard, there is some row where the entries decrease from left to right. This corresponds to columns of $r$ which appear in one of the two shapes below:
    \begin{center}
        \begin{ytableau}
            a & b\\
            c
        \end{ytableau} \begin{ytableau}
            \none & c\\
            a & b
        \end{ytableau}
    \end{center}

    In either case, $a > b$, so $a\ne 1$. If we consider the diagram on the left, it must then be the case by (i) that $(ac)=(23)$. In addition, $b < a$ so $b=1$, and we have $(1)(23)$ in successive columns, so (ii) does not hold. In the diagram on the right, $b\ne 3$ since $b < a$, so $(cb)=(12)$ and $a>b$ so $a=3$. We again find a $(12)(3)$ pattern as desired.
\end{proof}

\begin{definition}
    Let $r$ be a Fibonacci ribbon or dual ribbon. We define $r^*$, the dual of $r$, as follows:
    \begin{enumerate}
        \item[(i)] If $r(i)=(j)$ then $r^*(i)$ has entries $\{1,2,3\}-\{4-j\}$ increasing from top to bottom.
        \item[(ii)] If $r(i)$ has entries $\{1,2,3\}-\{j\}$ then $r^*(i)=(4-j)$. 
    \end{enumerate}
\end{definition}

\begin{example}
    The dual of  \begin{ytableau}
            \none & \none & 1\\
            1 & 1 & 2\\
            3
    \end{ytableau} is \begin{ytableau}
            \none & 1 & 1\\
            2 & 2\\
        \end{ytableau}.
\end{example}

\begin{proposition}
    The map $r\mapsto r^*$ induces a bijection between Fibonacci ribbons and dual ribbons of length $n$ for all $n\ge 1$.
\end{proposition}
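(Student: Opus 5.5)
Throughout I use Proposition~\ref{prop:ribboncondition}, which characterizes Fibonacci ribbons and dual ribbons column by column: condition (i) on each column, plus the absence of the pattern $123$ in the linear form. Under this characterization the map $r\mapsto r^*$ acts column by column. A single entry $(j)$ goes to the two-element column $\{1,2,3\}-\{4-j\}$, and a two-element column $\{1,2,3\}-\{j\}$ goes to $(4-j)$. In the ribbon shape the odd columns (counted from the right) have one box and the even columns have two. In the dual ribbon shape this is reversed. Hence $r^*$ has exactly the dual ribbon shape of length $n$, and symmetrically the dual of a dual ribbon has the ribbon shape. Condition (i) clearly holds for $r^*$, since every column of $r^*$ is either a single value or the complement of one value written in increasing order.

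Next I check that the columnwise map is an involution. Take a singleton column $(j)$. It is sent to the complement of $\{4-j\}$, which is then sent to $(4-(4-j))=(j)$. Take a two-box column, the complement of $\{j\}$. It is sent to $(4-j)$, which is then sent back to the complement of $\{4-(4-j)\}=\{j\}$. So $(r^*)^*=r$ whenever both sides are defined. It follows that the map is injective and that its inverse is given by the same rule applied to dual ribbons. Therefore the remaining task is to show that $r^*$ avoids the pattern $123$ in its linear form, and likewise in the reverse direction.

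This pattern check is the main obstacle. The linear form reads the columns from right to left, and each column from top to bottom. As noted in the proof of Proposition~\ref{prop:ribboncondition}, a $123$ can only arise as consecutive columns of the form $(12)(3)$ or $(1)(23)$. I would therefore argue by contrapositive. Suppose $r^*$ has two adjacent columns $(12)(3)$. Since $(12)$ is the complement of $\{3\}=\{4-1\}$, it comes from $(1)$, and $(3)$ comes from the complement of $\{1\}$, namely $(23)$. So $r$ has the adjacent columns $(1)(23)$, which is forbidden. Similarly, adjacent columns $(1)(23)$ in $r^*$ come from the adjacent columns $(12)(3)$ in $r$, again forbidden. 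The translation is the same whether we start from a ribbon or from a dual ribbon. Hence $r^*$ satisfies (ii), and Proposition~\ref{prop:ribboncondition} shows that $r^*$ is a dual ribbon (respectively a ribbon).

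Combining the involution with this closure property, $r\mapsto r^*$ is a bijection between Fibonacci ribbons and dual ribbons of length $n$ for every $n\ge 1$.
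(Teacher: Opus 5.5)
Your proof is correct and follows the paper's argument. Like the paper, you use the involution $(r^*)^*=r$, the swap of one- and two-box columns to place $r^*$ in the dual shape, and the correspondence $(1)(23)\leftrightarrow(12)(3)$ to carry the absence of $123$ patterns across, then conclude with Proposition~\ref{prop:ribboncondition}, only with the involution and the contrapositive pattern check written out in more detail.
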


\begin{proof}
    By definition, it is evident that $(r^*)^*=r$. As such, it is sufficient to show that $r\mapsto r^*$ maps ribbons to dual ribbons and vice versa.

    Let $r$ be a Fibonacci ribbon of length $n$. By definition, it is clear that $r^*$ has $n$ columns, with 1 entry when $r$ has 2 entries, and vice versa. As such, $r^*$ is a tableau of the Fibonacci dual ribbon shape of length $n$. Furthermore, we note that by definition, $r\mapsto r^*$ maps $(1)(23)$ to $(12)(3)$ and vice versa. As such, since the linear form of $r$ has no 123 patterns, neither does the linear form of $r^*$. Finally, we note that by definition, the columns of $r^*$ have increasing entries from top to bottom. Then, by Proposition~\ref{prop:ribboncondition}, we conclude that $r^*$ is a Fibonacci dual ribbon of length $n$ as desired.

    An analogous argument holds for Fibonacci dual ribbons of length $n$.
\end{proof}

\subsection{Ballot-Admissibility and Bonds}
\begin{definition}
    A Fibonacci ribbon or dual ribbon which is a Littlewood-Richardson tableau is called a ballot-admissible ribbon or dual ribbon.  
\end{definition}

\begin{proposition}
    Let $r$ be a ribbon or dual ribbon. Then, $b_i$ from Definition~\ref{def:matching} induces an injective partial function from columns of $r$ with an $i+1$ entry to columns of $r$ with an $i$ entry.
\end{proposition}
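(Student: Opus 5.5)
The plan is to pass from positions in the linear form $w$ of $r$ to columns of $r$. The linear form is read top to bottom within each column, moving from right to left. So $w$ is the concatenation $r(1)r(2)\cdots r(n)$, where each column contributes its entries in increasing order, and each letter of $w$ therefore belongs to exactly one column. Given a column $c$ containing an entry $i+1$, I will let $x$ be the position in $w$ of that entry and define the image of $c$ to be the column containing position $b_i(w,x)$, whenever this is defined. By the remark following Definition~\ref{def:matching}, $w(b_i(w,x))=i$, so the image is a column with an $i$ entry. This gives a partial function from columns with an $i+1$ entry to columns with an $i$ entry.

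The first step is well-definedness. By Proposition~\ref{prop:ribboncondition}(i), every column has strictly increasing entries, so a column contains at most one copy of $i+1$. Hence the position $x$ is determined by $c$, and the assignment does not depend on any choice.

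The second step, injectivity, is where the only real content lies. By Proposition~\ref{prop:binjective}, distinct positions $x_1\ne x_2$ give distinct positions $b_i(w,x_1)\ne b_i(w,x_2)$. Again by Proposition~\ref{prop:ribboncondition}(i), a column contains at most one $i$ entry, so distinct positions holding the letter $i$ lie in distinct columns. Suppose two columns $c_1\ne c_2$ with $i+1$ entries are both mapped to the same column. Their $i+1$ entries occupy distinct positions $x_1\ne x_2$, so the $i$-positions $b_i(w,x_1)$ and $b_i(w,x_2)$ are distinct, yet both lie in that single column. That column would then contain two $i$ entries, which is a contradiction.

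The main (mild) obstacle is simply making explicit that "at most one $i$ per column" converts injectivity on letters into injectivity on columns. Nothing beyond Proposition~\ref{prop:ribboncondition}(i) and Proposition~\ref{prop:binjective} should be needed. For the dual-ribbon case the same argument applies verbatim, since Proposition~\ref{prop:ribboncondition} covers both shapes.
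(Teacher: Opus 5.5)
Your proposal is correct and takes essentially the same approach as the paper. Both arguments use Proposition~\ref{prop:ribboncondition}(i) (at most one $i+1$ entry per column) to pass $b_i$ from letters of the linear form to columns, and then combine Proposition~\ref{prop:binjective} with the fact that each column has at most one $i$ entry to obtain injectivity on columns; you simply make explicit the position-to-column bookkeeping that the paper leaves implicit.
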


\begin{proof}
    By definition, $b_i$ is a partial function from $i+1$ letters to $i$ letters in the linear form of $r$. Since by Proposition~\ref{prop:ribboncondition}, there may exist at most one $i+1$ entry in each column of $r$, $b_i$ induces a partial function from columns of $r$ with an $i+1$ entry to columns with an $i$ entry. Furthermore, as $b_i$ is injective, and by Proposition~\ref{prop:ribboncondition}, any given column has at most one $i$ entry, the partial function induced by $b_i$ remains injective.
\end{proof}

\begin{definition}
    Given some ribbon or dual ribbon $r$, we denote the partial function induced by each $b_i$ as $c\mapsto b_i(r,c)$. Note that by Definition~\ref{def:matching} for $b_i$, $b_i(r,c) \le c$ is the maximal column in $r$ such that $\mu_i(r([b_i(c), c]))=\mu_{i+1}(r([b_i(c), c]))$.
\end{definition}

\begin{notation}
    Given a fixed ribbon $r$, we may refer to this partial function as $c\mapsto b_i(c)$.
\end{notation}

\begin{proposition}\label{prop:ballotcondition}
    Let $r$ be a ribbon or dual ribbon. Then, $r$ is ballot-admissible iff each partial function $c\mapsto b_i(c)$ is a function.
\end{proposition}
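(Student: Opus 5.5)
The plan is to reduce this statement to Theorem~\ref{thm:lrcondition}, using the correspondence between letters of the linear form $w$ of $r$ and columns of $r$ set up in the preceding proposition. A ribbon or dual ribbon is by definition a skew-shaped semistandard tableau with entries in $[3]$. It is ballot-admissible exactly when it is a Littlewood--Richardson tableau. By Theorem~\ref{thm:lrcondition}, this holds iff for $i\in\{1,2\}$ the letter-level partial function $x\mapsto b_i(w,x)$ is defined on every position $x$ with $w(x)=i+1$. So it suffices to show that, for each $i$, the letter-level map is total iff the column-level map $c\mapsto b_i(c)$ is total.

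\textbf{Step 1: a dictionary between letters and columns.} The linear form reads columns from right to left and each column from top to bottom, so $w$ is the concatenation of the words of $r(1),r(2),\ldots,r(n)$. By Proposition~\ref{prop:ribboncondition}(i), each column contains at most one entry equal to $i+1$ and at most one equal to $i$. Hence there is a bijection between positions $x$ with $w(x)=i+1$ and columns $c$ containing an $i+1$. The same holds for $i$-letters and columns containing an $i$.

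\textbf{Step 2: the two maps agree.} Fix $x$ with $w(x)=i+1$, and let $c$ be its column. We need $b_i(w,x)$ to be defined exactly when $b_i(c)$ is, and in that case to lie in the column $b_i(c)$. This is really how the column map was defined in the previous proposition, so I would only check one point. Letters lying in column $c$ after position $x$ do not matter, since $i+1$ appears only once in that column. Likewise, letters of column $c$ before $x$ can only be an $i$ if $i+1$ sits below it; then $(i,i+1)$ inside one column gives the balanced segment $w([x-1,x])$, so $b_i(w,x)=x-1$ lies in column $c$ itself.

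\textbf{Step 3: conclude.} Given Step 2, totality of the letter map on $(i+1)$-letters is equivalent to totality of the column map on columns containing $i+1$. Applying this for $i=1,2$ and invoking Theorem~\ref{thm:lrcondition} gives the equivalence.

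The main obstacle is the bookkeeping in Step 2, especially where a balanced segment starts or ends inside a two-box column. There I would rely on Proposition~\ref{prop:ribboncondition}(i): a two-box column has entries $\{1,2,3\}-\{j\}$ in increasing order, so within a single column $i$ always appears above $i+1$. Everything else is essentially a restatement of the preceding proposition combined with Theorem~\ref{thm:lrcondition}.
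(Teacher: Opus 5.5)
Your proposal is correct and follows the paper's route: the paper also treats $c\mapsto b_i(c)$ as the map induced by the letter-level $b_i$ (via the identification of $(i+1)$-letters with columns containing $i+1$ from Proposition~\ref{prop:ribboncondition}(i)). Totality of one map is then totality of the other, and the statement follows immediately from Theorem~\ref{thm:lrcondition}. Your Step~2 bookkeeping is more than this equivalence needs, since the column map is defined as the one induced by $b_i(w,\cdot)$; only the letter--column dictionary of Step~1 is actually used.
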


\begin{proof}
    Since each partial function $c\mapsto b_i(c)$ is induced by $b_i$ as given in Definition~\ref{def:matching}, this follows immediately from Theorem~\ref{thm:lrcondition}.
\end{proof}

\begin{proposition}\label{prop:bondentryamounts}
    Let $r$ be a ballot-admissible Fibonacci ribbon or dual ribbon. Then, $c$ is the minimal column $ \ge d$ such that $\mu_i(r([d,c]))=\mu_{i+1}(r([d,c]))$ iff $b_i(c)=d$.
\end{proposition}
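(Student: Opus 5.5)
The plan is to work at the level of columns with a running ``balance'' function and exploit the two extremality conditions (maximality of $b_i(c)$, minimality of $c$) against each other. Columns are indexed right to left, and this agrees with the order in which they appear in the linear form. By Proposition~\ref{prop:ribboncondition}, each column contains at most one $i$ entry and at most one $i+1$ entry. For fixed $d$ and $c'\ge d$, set
$$g(c')=\mu_i(r([d,c']))-\mu_{i+1}(r([d,c'])).$$
Passing from $c'$ to $c'+1$ then changes $g$ by $+1$, $-1$ or $0$. I read the statement with the implicit standing assumptions that $d$ is a column containing an $i$ entry and that $c$ contains an $i+1$ entry (this is the only setting where $b_i(c)=d$ can make sense).

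\emph{($\Leftarrow$).} Suppose $b_i(c)=d$. By definition $r([d,c])$ has equally many $i$ and $i+1$ entries, so only minimality of $c$ needs proof. Suppose instead some $c'$ with $d\le c'<c$ also has $\mu_i(r([d,c']))=\mu_{i+1}(r([d,c']))$. Subtracting, the block $r([c'+1,c])$ is balanced, and $c'+1\le c$. The maximality of $b_i(c)$ then forces $b_i(c)\ge c'+1>d$, a contradiction.

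\emph{($\Rightarrow$).} Suppose $c$ is the minimal column $\ge d$ with $g(c)=0$. Since $r$ is ballot-admissible, Proposition~\ref{prop:ballotcondition} shows that $e=b_i(c)$ is defined. Because $r([d,c])$ is balanced and $e$ is the maximal column with this property, we get $e\ge d$. Suppose $e>d$. Then $r([e,c])$ is balanced, and subtracting from $r([d,c])$ shows that $r([d,e-1])$ is balanced, that is, $g(e-1)=0$ with $d\le e-1<c$. This contradicts the minimality of $c$, so $e=d$.

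The main obstacle I expect is the bookkeeping between the letter-level $b_i$ of Definition~\ref{def:matching} and its column-level version, especially for columns containing both $i$ and $i+1$ (for example $(12)$ when $i=1$). In such a column the $i$ is read immediately before the $i+1$, so the word-level matching pairs them and gives $b_i(c)=c$. This is consistent with the above: if $d$ contains both entries then $g(d)=0$, the minimal $c$ is $d$ itself, and $b_i(d)=d$.

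In the remaining case, $d$ contains $i$ but not $i+1$, so $g(d)=1$. Then $g$ first reaches $0$ at a column containing $i+1$ and no $i$, so $c$ automatically carries an $i+1$ entry and $b_i(c)$ is meaningful. I would verify these two cases explicitly before running the extremality argument, so that all the ``subtracting a balanced block'' steps are legitimate at the column level.
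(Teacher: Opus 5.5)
Your proof is correct and takes essentially the same approach as the paper: in each direction you subtract a balanced block and play the maximality of $b_i(c)$ against the minimality of $c$, exactly as the paper does. Your extra bookkeeping also fills in a point the paper leaves implicit: $d$ must contain an $i$ entry, and the first zero of the running balance then falls on a column with an $i+1$ entry, so $b_i(c)$ is meaningful.
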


\begin{proof}
    Let $b_i(c)=d$. Then, by definition, $d$ is the maximal value such that $\mu_i(r([d,c]))=\mu_{i+1}(r([d,c]))$. Next, suppose for the sake of contradiction that for $e < c$, $\mu_i(r([d,e]))=\mu_{i+1}(r([d,e]))$. We note that $[d,c]-[d,e]=[e+1,c]$. Since $\mu_i(r([d,c]))=\mu_{i+1}(r([d,c]))$ and $\mu_i(r([d,e]))=\mu_{i+1}(r([d,e]))$, it then follows that $\mu_i(r([e+1,c]))=\mu_{i+1}(r([e+1,c]))$, which is impossible by maximality of $d$, as $e+1 > d$. Thus, we have a contradiction, and by contradiction, $c$ is the minimal column where $\mu_i(r([d,c]))=\mu_{i+1}(r([d,c]))$ as desired.

    Next, suppose that $c$ is the minimal column $\ge d$ such that $\mu_i(r([d,c]))=\mu_{i+1}(r([d,c]))$. Then, $b_i(c)$ is defined as the maximal $e\le c$ such that $\mu_i(r([e,c]))=\mu_{i+1}(r([e,c]))$. Suppose for the sake of contradiction that $e > d$. Then, since we have $[d,c]-[e,c]=[d,e-1]$, $\mu_i(r([d,c]))=\mu_{i+1}(r([d,c]))$, and $\mu_i(r([e,c]))=\mu_{i+1}(r([e,c]))$, it follows that $\mu_i(r([d,e-1]))=\mu_{i+1}(r([d,e-1]))$, but since $e-1 < c$, this is impossible by minimality of $c$. As such, we have a contradiction. By contradiction, $e\le d$, which is sufficient by maximality of $e$ to conclude that $d=e=b_i(c)$.
\end{proof}

\begin{corollary}\label{cor:bondentryamounts}
    Let $r$ be a ballot-admissible ribbon. Let $r(c)$ have an $i$ entry. Let $c \le d$. If $\mu_i(r([c,d]))\le \mu_{i+1}(r([c,d]))$, then there exists some $e\le d$ such that $c=b_i(e)$.
\end{corollary}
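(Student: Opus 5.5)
We prove the corollary with a discrete intermediate value argument on column prefixes starting at $c$, followed by an appeal to Proposition~\ref{prop:bondentryamounts}. For $c\le e\le d$ define
\[
g(e)=\mu_i(r([c,e]))-\mu_{i+1}(r([c,e])).
\]

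\textbf{Step 1: the first value of $g$.} Column $c$ contains an $i$ entry. By Proposition~\ref{prop:ribboncondition}(i), it cannot also contain an $i+1$ entry. To see this, note that a two-entry column is $\{1,2,3\}$ minus one value, and it contains both $i$ and $i+1$ only if it is $(12)$ with $i=1$ or $(23)$ with $i=2$. So the claim needs checking.

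Before restricting, I recheck the definition. Definition~\ref{def:matching} counts letters in $w([y,x])$, which include the whole of column $c$. A column $(12)$ has one $1$ and one $2$, so $g(c)$ could equal $0$. Hence we cannot assume $g(c)=1$ directly.

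\textbf{Step 2: handling the case $g(c)=0$.} If $g(c)=0$, column $c$ contains both $i$ and $i+1$. In the reading order (top to bottom within a column), the $i$ precedes the $i+1$. At the letter level, $b_i$ of that $i+1$ letter is the $i$ letter just above it. So $b_i(c)=c$ under the column-induced map, and we take $e=c$.

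\textbf{Step 3: the case $g(c)=1$.} Otherwise column $c$ contains an $i$ but no $i+1$, so $g(c)=1$. Each subsequent column contains at most one $i$ and at most one $i+1$ entry. If it contains both, it contributes $0$ to the difference. Hence $|g(e+1)-g(e)|\le 1$.

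\textbf{Step 4: finding $e$.} The hypothesis gives $g(d)\le 0$. By the discrete intermediate value property there is a minimal $e\in[c,d]$ with $g(e)=0$.

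\textbf{Step 5: concluding.} At this $e$, the column $e$ must contain an $i+1$, since $g$ has just decreased to $0$. Moreover $e$ is the minimal column $\ge c$ with $\mu_i(r([c,e]))=\mu_{i+1}(r([c,e]))$. By Proposition~\ref{prop:bondentryamounts}, which is applicable because $r$ is ballot-admissible, we get $b_i(e)=c$.

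\textbf{Main obstacle.} The delicate point is reconciling the column-level statement with the letter-level definition. In Proposition~\ref{prop:bondentryamounts}, minimality must be understood over columns. A column containing both $i$ and $i+1$ can keep $g$ unchanged without being the match. Minimality of $e$ among columns where $g$ first hits zero handles this, because hitting zero first requires a decrease, and a decrease occurs only at a column with an $i+1$ and no $i$.
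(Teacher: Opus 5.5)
Your proof is correct and follows the paper's own argument: a discrete intermediate value argument for $\mu_i-\mu_{i+1}$ on $r([c,z])$, choosing the minimal zero $e$, and then applying Proposition~\ref{prop:bondentryamounts} to obtain $b_i(e)=c$. The case split in Steps 1--2 is unnecessary, since when $g(c)=0$ the minimal zero is $e=c$ and the same proposition gives $b_i(c)=c$; also, the opening claim of Step 1 that column $c$ cannot contain an $i+1$ entry is false and should be deleted rather than retracted mid-proof.
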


\begin{proof}
    For $z\ge c$, let $r_z=r([c,z])$ and let $f(z)=\mu_i(r_z)-\mu_{i+1}(r_z)$. Since $r(c)$ has an $i$ entry, it follows that $f(c)\ge 0$. In addition, by our assumption, $f(d)\le 0$. Finally, we note that $|f(z+1)-f(z)|\le 1$ since any given column may have at most one $i$ and one $i+1$ entry. As such, $f$ must achieve intermediate values, and there exists some $c\le z\le d$ such that $f(z)=0$.

    Now, let $e\le z$ be the minimal column $\ge c$ such that $f(e)=0$. By Proposition~\ref{prop:bondentryamounts}, $b_i(e)=c$. 
\end{proof}

\begin{proposition}\label{prop:naturalmatch}
    Let $r$ be a ballot-admissible Fibonacci ribbon or dual ribbon of length $n$. Then, each map $c\mapsto b_i(c)$ obeys the following properties:
    \begin{enumerate}
        \item[(i)] Suppose that $b_i(c_0)=d_0$ and $b_i(c_1)=d_1$. If $c_0 < c_1$, either $d_1 < d_0$ or $c_0 < d_1$.
        \item[(ii)] Suppose that $b_i(c_0)=d_0$ and that for $d_0\le d_1 \le c_0$, $r(d_1)$ has an $i$ entry. Then, there exists some $c_1 \le c_0$ such that $b_i(c_1)=d_1$. 
    \end{enumerate}
\end{proposition}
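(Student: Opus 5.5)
Both parts come from the same device used in Proposition~\ref{prop:bsufficient} and Corollary~\ref{cor:bondentryamounts}: a running balance function on column intervals whose consecutive values differ by at most one. Fix $i$. For an interval $[x,y]$ of columns write $D([x,y])=\mu_i(r([x,y]))-\mu_{i+1}(r([x,y]))$. By Proposition~\ref{prop:ribboncondition} each column contains at most one $i$ and at most one $i+1$ entry, so extending an interval by one column changes $D$ by at most $1$. Proposition~\ref{prop:bondentryamounts} characterizes $b_i(c)=d$ in two ways. First, $D([d,c])=0$ and $c$ is the minimal such right endpoint, which forces $D([d,e])>0$ for $d\le e<c$, since $D([d,d])\ge 0$ and the intermediate value principle applies. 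Second, $d$ is the maximal such left endpoint, which forces $D([e,c])<0$ for $d<e\le c$.

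For (ii), I apply Corollary~\ref{cor:bondentryamounts} with $c=d_1$ and $d=c_0$. The corollary requires $\mu_i(r([d_1,c_0]))\le\mu_{i+1}(r([d_1,c_0]))$, that is $D([d_1,c_0])\le 0$. If $d_1=d_0$ this is $D([d_0,c_0])=0$. If $d_0<d_1\le c_0$, then the maximality description of $d_0=b_i(c_0)$ gives $D([d_1,c_0])<0$. In either case the corollary produces some $c_1\le c_0$ with $b_i(c_1)=d_1$, as required. (When $d_1=d_0$ we may simply take $c_1=c_0$.)

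For (i), assume $c_0<c_1$ and suppose for contradiction that $d_0\le d_1\le c_0$. Since $b_i$ is injective by Proposition~\ref{prop:binjective}, $d_1\ne d_0$, so $d_0<d_1\le c_0<c_1$. I then split
\[D([d_0,c_1]) = D([d_0,d_1-1]) + D([d_1,c_1]).\]
The second term vanishes because $b_i(c_1)=d_1$. The first term is strictly positive because $d_0\le d_1-1<c_0$ and the minimality description of $c_0$ gives $D([d_0,e])>0$ for $d_0\le e<c_0$. Hence $D([d_0,c_1])>0$.

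On the other hand, $D([d_0,c_1])=D([d_0,c_0])+D([c_0+1,c_1])=D([c_0+1,c_1])$. Since $d_1\le c_0<c_0+1\le c_1$, the maximality description of $d_1=b_i(c_1)$ gives $D([c_0+1,c_1])<0$. This contradicts $D([d_0,c_1])>0$, so either $d_1<d_0$ or $d_1>c_0$, which is (i).

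The main obstacle is justifying the strict sign claims. Both rely on the column-level $\pm1$ step property, which in turn needs Proposition~\ref{prop:ribboncondition}, and on $D([d,d])\ge 0$ because $r(d)$ has an $i$ entry. The remaining steps are routine bookkeeping of intervals.
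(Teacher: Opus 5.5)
Your proof is correct and is essentially the paper's argument. For (ii), you and the paper both reduce to $D([d_1,c_0])<0$ and apply Corollary~\ref{cor:bondentryamounts}. For (i), you reach the same kind of sign contradiction and differ only in bookkeeping: you compare two splittings of $[d_0,c_1]$, using the fact that proper prefixes of a bonded interval are positive and proper suffixes are negative, which you get directly from Proposition~\ref{prop:bondentryamounts} by the intermediate value argument; the paper instead splits $[d_0,c_0]$ and gets its strict inequalities from Corollary~\ref{cor:bondentryamounts} together with injectivity of $b_i$.

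One small imprecision: your suffix claim $D([e,c])<0$ rests on $D([c,c])\le 0$ (because $r(c)$ has an $i+1$ entry), not on $D([d,d])\ge 0$ as your last paragraph suggests.
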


\begin{proof}
    We first show (i). For $c_0 < c_1$, let $b_i(c_0)=d_0$ and $b_i(c_1)=d_1$. Suppose for the sake of contradiction that $d_0 \le d_1\le c_0$. By injectivity of $b_i$, $d_0 \ne d_1$. In addition, if $c_0$ has an $i$ entry, then $\mu_i(r(c_0))=\mu_{i+1}(r(c_0))$ and so $b_i(c_0)=c_0$. By injectivity of $b_i$ again, $d_1\ne c_0$. As such, $d_0 < d_1 < c_0$. Now, by Corollary~\ref{cor:bondentryamounts}, since $d_1=b_i(c_1)$ for $c_1 > c_0$, $\mu_i(r([d_1,c_0])) > \mu_{i+1}(r([d_1,c_0]))$. In addition, since $d_0=b_i(c_0)$, $\mu_i(r([d_0,c_0]))=\mu_{i+1}(r([d_0,c_0]))$. Since $d_0 < d_1$, $\mu_i(r([d_0,d_1-1]))\le \mu_{i+1}(r([d_0,d_1-1]))$. By Corollary~\ref{cor:bondentryamounts}, it follows that $d_0=b_i(e)$ for some $e\le d_1-1 < c_0$, which is impossible by injectivity of $b_i$. As such, we have a contradiction, and by contradiction, either $d_1 < d_0$ or $c_0 < d_1$.

    We next show (ii). Suppose that $b_i(c_0)=d_0$ and $r(d_1)$ has an $i$ entry for $d_0\le d_1\le c_0$. Then, since $b_i(c_0)=d_0$, $\mu_i(r([d_0,c_0]))=\mu_{i+1}(r([d_0,c_0]))$. Now, if $d_1=d_0$ then $d_1=b_i(c_1)$ and we are done. If $d_0 < d_1 \le c_0$, then since $d_0=b_i(c_0)$ for $c_0 > d_1-1$, $\mu_i(r([d_0,d_1-1])) > \mu_{i+1}(r([d_0,d_1-1]))$ by Corollary~\ref{cor:bondentryamounts}. In addition, as $d_0=b_i(c_0)$, $\mu_i(r([d_0,c_0]))=\mu_{i+1}(r([d_0,c_0]))$. 
    
    As $[d_0,c_0]-[d_0,d_1-1]=[d_1,c_0]$, we conclude that $\mu_i(r([d_1,c_0])) < \mu_{i+1}(r([d_1,c_0]))$. Then, by Corollary~\ref{cor:bondentryamounts}, $d_1=b_i(c_1)$ for some $c_1\le c_0$ as desired.
\end{proof}

\begin{definition}
    Let $r$ be some ribbon or dual ribbon. If $c < d$ and $c=b_i(d)$ for some $i$, then we say that $c$ is bonded in $r$, or that $c$ and $d$ are bonded in $r$.
\end{definition}

\begin{remark}
    Proposition~\ref{prop:naturalmatch} allows us to clearly interpret each $b_i$ as a noncrossing matching. This is reminiscent of the correspondence between RNA secondary structure bonds and peakless Motzkin path steps, which is why we refer to this correspondence as a bond.
\end{remark}

\begin{proposition}
    Let $r$ be some ribbon or dual ribbon. Let $c$ be some column of $r$. Then,
    \begin{enumerate}
        \item[(i)] There exists at most one column $d < c$ where $d$ and $c$ are bonded.
        \item[(ii)] There exists at most one column $d > c$ where $d$ and $c$ are bonded.
    \end{enumerate}
\end{proposition}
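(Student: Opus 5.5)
The plan is a case analysis on the possible contents of the column $c$, using the column description in Proposition~\ref{prop:ribboncondition}(i) and the injectivity of each column map $c\mapsto b_i(c)$. By Proposition~\ref{prop:ribboncondition}(i), every column is either a single box $(j)$ or a two-box column whose entries, read top to bottom, are $(12)$, $(13)$ or $(23)$. Since $i\in\{1,2\}$, a bond $d<c$ with $d=b_i(c)$ requires $c$ to contain an $i+1$ entry. A bond $c<d$ with $c=b_i(d)$ requires $c$ to contain an $i$ entry. So the only columns that could have two bonds on the same side are those containing two distinct "upper" letters $i+1$, or two distinct "lower" letters $i$. I will check that in each such case one of the two matchings is forced to be internal to the column.

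For (i), a column with two entries from $\{2,3\}$ must be $(23)$; every other column has at most one entry in $\{2,3\}$. Such a column can therefore be bonded to an earlier column through at most one $b_i$. Since $b_i$ is a partial function, that gives at most one $d<c$. If $r(c)=(23)$, the linear form contains the consecutive letters $2$ then $3$ at positions $x-1,x$, because columns are read top to bottom. The word $w([x-1,x])$ has equally many $2$'s and $3$'s, and $x-1$ is the largest position $\le x$ with this property (position $x$ alone gives one $3$ and no $2$). Hence $b_2(c)=c$, so only $b_1(c)$ can give a column $d<c$. This proves (i).

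For (ii), a column with two entries from $\{1,2\}$ must be $(12)$; otherwise $c$ has at most one entry in $\{1,2\}$. That entry is a single letter $i$, and injectivity of $b_i$ (Proposition~\ref{prop:binjective}, or its column version) allows at most one $d$ with $b_i(d)=c$. If $r(c)=(12)$, the same computation as in (i) gives $b_1(c)=c$. Injectivity of the column map $b_1$ then forbids $b_1(d)=c$ for any $d>c$. So only $b_2$ can bond $c$ to a later column, and by injectivity of $b_2$ it does so for at most one $d$. This proves (ii).

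The only delicate point is the claim that the two entries of a column are adjacent in the linear form and in top-to-bottom order. This follows from the reading order fixed in the definition of the linear form, as illustrated in the example $(12)(1)(13)$. With that in hand, the self-matching $b_i(c)=c$ is immediate from the maximality in Definition~\ref{def:matching}.
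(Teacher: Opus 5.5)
Your proof is correct and follows the same idea as the paper. Whenever $r(c)$ contains both $i$ and $i+1$, you get $b_i(c)=c$, so at most one $b_i$ can bond $c$ to an earlier (resp.\ later) column. Single-valuedness of that $b_i$ then settles (i), and its injectivity settles (ii). Your version is slightly more careful than the paper's terse one in two ways: you verify the self-matching explicitly from the column-wise reading order, and you keep the function property (used in (i)) separate from injectivity (used in (ii)).
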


\begin{proof}
    For any column $c$, by Proposition~\ref{prop:ribboncondition}, there exists at most one entry $i+1$ where no $i$ entry exists in $r(c)$. For any other value of $i$, $r(c)$ has both an $i$ and $i+1$ entry, so $b_i(c)=c$. As such, if $d,e < c$ and $d=b_{i_1}(c)$ and $e=b_{i_2}(c)$, it must be the case that $i_1=i_2$. So, $d=e$ by injectivity of each $b_i$. An analogous argument holds for the case where $d > c$.
\end{proof}

\begin{definition}
    Let $r$ be some ribbon or dual ribbon. If $r(c)$ has an $i$ entry and no $i+1$ entry, but $c\notin \text{Im}(b_i(r,\_))$, then we say that $c$ is unbonded, or $i$-unbonded, in $r$.
\end{definition}

\begin{proposition}\label{prop:bondchangecrit}
    Let $r_0$ and $r_1$ be ballot-admissible ribbons of the same length with identical entries except in some column $c'$. Then, the following holds:
    \begin{enumerate}
        \item[(i)] For $c < d < c'$, if $c$ and $d$ are bonded in $r_0$, then $c$ and $d$ are bonded in $r_1$.
        \item[(ii)] For $c' < c < d$, if $c$ and $d$ are bonded in $r_0$, then $c$ and $d$ are bonded in $r_1$. 
    \end{enumerate}
\end{proposition}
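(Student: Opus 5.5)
Both parts follow from the locality of the bond relation. Whether $c$ and $d$ are bonded depends only on the entries of the columns in $[c,d]$, and in both cases the modified column $c'$ lies outside that interval. The tool is Proposition~\ref{prop:bondentryamounts}. In a ballot-admissible ribbon, $b_i(d)=c$ holds iff $d$ is the minimal column $\ge c$ with $\mu_i(r([c,d]))=\mu_{i+1}(r([c,d]))$. This condition involves only the restriction $r([c,d])$, provided $r(c)$ has an $i$ entry (so that $c$ can be a value of $b_i$) and $r(d)$ has an $i+1$ entry (so that $b_i(d)$ is defined).

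For (i), suppose $c<d<c'$ and $c=b_i(d)$ in $r_0$ for some $i$. Since $r_0$ and $r_1$ agree on every column of $[c,d]$, the following hold:
\begin{enumerate}
\item[(a)] $r_1(d)$ has an $i+1$ entry, so $b_i(r_1,d)$ is a candidate.
\item[(b)] $\mu_i(r_1([c,e]))=\mu_i(r_0([c,e]))$ and $\mu_{i+1}(r_1([c,e]))=\mu_{i+1}(r_0([c,e]))$ for every $c\le e\le d$.
\end{enumerate}
By the forward direction of Proposition~\ref{prop:bondentryamounts} applied to $r_0$, $d$ is the minimal $e\ge c$ at which the two counts on $[c,e]$ agree. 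By (b), the same is true in $r_1$. Because $r_1$ is ballot-admissible, the reverse direction of Proposition~\ref{prop:bondentryamounts} applies to $r_1$ and gives $b_i(r_1,d)=c$. Hence $c$ and $d$ are bonded in $r_1$.

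For (ii), suppose $c'<c<d$. The argument is the same, since $[c,d]$ again avoids $c'$ and every column involved is unchanged. I would stress that Proposition~\ref{prop:bondentryamounts} uses only the window $[c,d]$. The alternative characterization of $b_i(d)$ as the maximal $y\le d$ with equal counts on $[y,d]$ looks to the left of $c$, and through $c'$, only in the sense that it ranges over $y<c$. Maximality already forces $y\ge c$, so columns to the left of $c$, including $c'$, never affect it.

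The one point needing care is the implicit well-definedness issue: the reverse direction of Proposition~\ref{prop:bondentryamounts} needs $r_1$ to be ballot-admissible, so that $b_i(r_1,d)$ exists. This is given by hypothesis. Alternatively, Proposition~\ref{prop:bsufficient} shows directly that $b_i(r_1,d)$ exists and is at least $c$, because the counts balance on $[c,d]$. Maximality plus the minimality of $d$ in the window then pins it to exactly $c$: if $b_i(r_1,d)=e>c$, subtracting the windows would give balance on $[c,e-1]$ with $e-1<d$, a contradiction.
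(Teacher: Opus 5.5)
Your proposal is correct and is essentially the paper's own proof. Both use the minimality characterization in Proposition~\ref{prop:bondentryamounts}: since $c'\notin[c,d]$, the counts $\mu_i,\mu_{i+1}$ on every window $[c,e]$ with $c\le e\le d$ agree in $r_0$ and $r_1$, so the minimality of $d$ carries over from $r_0$ to $r_1$. Your added remark is also valid: Proposition~\ref{prop:bsufficient} already guarantees that $b_i(r_1,d)$ exists and is at least $c$, without appealing to the ballot-admissibility of $r_1$.
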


\begin{proof}
    We first show (i). Suppose that $c < d < c'$ where $c$ and $d$ are bonded in $r_0$. Then, for some $i$, $d$ is the minimal column such that $\mu_i(r_0([c,d]))=\mu_{i+1}(r_0([c,d]))$. Since $c,d < c'$, we know that $\mu_i(r_1([c,d]))=\mu_{i+1}(r_1([c,d]))$ as well. In addition, for $c\le e < d$, if $\mu_i(r_1([c,e]))=\mu_{i+1}(r_1([c,e]))$ then since $e < c'$ as well, $\mu_i(r_0([c,e]))=\mu_{i+1}(r_0([c,e]))$ as well. This is impossible by minimality of $d$ in $r_0$, so $d$ must also be minimal in $r_1$, and by Proposition~\ref{prop:bondentryamounts}, $c$ and $d$ remain bonded in $r_1$.

    We next show (ii). Since $c' < c < d$, for all $e\ge c$, $r_0([c,e])=r_1([c,e])$ and so by an argument identical to the argument above, minimality of $d$ is preserved.
\end{proof}

\begin{lemma}\label{lemma:dualcounts}
    Let $r$ be a Fibonacci ribbon or dual ribbon. Then, $\mu_i(r)-\mu_{i+1}(r)=\mu_{3-i}(r^*)-\mu_{4-i}(r^*)$.
\end{lemma}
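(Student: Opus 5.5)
Since $\mu_i$ is additive over columns and $r\mapsto r^*$ acts column by column, the plan is to reduce the identity to a single-column check. For every column $c$ we aim to show
\[
\mu_i(r(c))-\mu_{i+1}(r(c))=\mu_{3-i}(r^*(c))-\mu_{4-i}(r^*(c)),
\]
and then sum over $c=1,\ldots,n$. The only indices that matter are $i\in\{1,2\}$, with $3-i$ ranging over the same set. By Proposition~\ref{prop:ribboncondition}(i), every column is either a singleton $(j)$ or the two-element column $\{1,2,3\}-\{j\}$, so only six column types need checking.

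For the singleton $r(c)=(j)$, the left side is $[j=i]-[j=i+1]$. The dual column is $\{1,2,3\}-\{4-j\}$, so $\mu_k(r^*(c))=1-[k=4-j]$. The right side is therefore
\[
(1-[3-i=4-j])-(1-[4-i=4-j])=[j=i]-[j=i+1],
\]
using $3-i=4-j\iff j=i+1$ and $4-i=4-j\iff j=i$. The two sides agree.

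For the two-element column $\{1,2,3\}-\{j\}$, we have $\mu_k(r(c))=1-[k=j]$, so the left side is $[j=i+1]-[j=i]$. The dual is $r^*(c)=(4-j)$, whose right side is
\[
[4-j=3-i]-[4-j=4-i]=[j=i+1]-[j=i].
\]
Again the two sides match. Summing the columnwise identity over all columns gives the lemma.

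No step is a real obstacle. The only care needed is the index bookkeeping: checking that the reflection $k\mapsto 4-k$ in the definition of $r^*$ together with the complement swap turns the pair $(i,i+1)$ into $(3-i,4-i)$ with the same sign. The two indicator computations above confirm this.
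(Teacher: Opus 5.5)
Your proof is correct and follows essentially the same route as the paper: you verify the identity column by column and then sum over the columns. The only difference is presentational. You parametrize the six column types by $j$ and use indicator brackets, whereas the paper splits cases by whether a column contains $i$, $i+1$, both, or neither.
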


\begin{proof}
    Consider each column $c$ of $r$ and $r^*$. If $r(c)$ has both $i$ and $i+1$ then $r^*(c)=(j)$ where $4-j\ne i,i+1$, so $j\ne 3-i, 4-i$. If $r(c)$ has neither $i$ nor $i+1$ then $r(c)=(j)$ for $j\ne i,i+1$ and so $r^*(c)$ has entries of both value $4-i$ and $4-(i+1)=3-i$. In all of these cases,
    \[\mu_i(r(c))-\mu_{i+1}(r(c)) = \mu_{3-i}(r^*(c))-\mu_{4-i}(r^*(c))=0.\]

    If $r(c)=(i)$ then $r^*(c)$ has all values except for $4-i$. As such, $r^*(c)$ has a $3-i$ entry and no $4-i$ entry. Then, 
    \[\mu_i(r(c))-\mu_{i+1}(r(c)) = \mu_{3-i}(r^*(c))-\mu_{4-i}(r^*(c))=1.\]

    If $r(c)$ has entries of all values except for $i$, then $r^*(c)=(4-i)$, and we find that
    \[\mu_i(r(c))-\mu_{i+1}(r(c)) = \mu_{3-i}(r^*(c))-\mu_{4-i}(r^*(c))=-1\]

    in both cases. If $r(c)=(i+1)$ or $r(c)$ has all values except for $i+1$, analogous statements hold.
    In this way, for all columns $c$, 
    \[\mu_i(r(c))-\mu_{i+1}(r(c)) = \mu_{3-i}(r^*(c))-\mu_{4-i}(r^*(c))\]

    and so $\mu_i(r)-\mu_{i+1}(r)=\mu_{3-i}(r^*)-\mu_{4-i}(r^*)$ as desired.
\end{proof}

\begin{theorem}
    The map $r\mapsto r^*$ preserves bonds between columns and ballot-admissibility.
\end{theorem}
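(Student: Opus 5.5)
The argument works at the level of column intervals and runs entirely through Lemma~\ref{lemma:dualcounts}. That lemma was proved column by column, so applying it to each column of an interval $[d,c]$ and summing gives, for every interval and every $i\in\{1,2\}$,
\[
\mu_i(r([d,c]))-\mu_{i+1}(r([d,c]))=\mu_{3-i}(r^*([d,c]))-\mu_{4-i}(r^*([d,c])).
\]
Write $f_i(d,c)$ for the left-hand side and $f^*_{3-i}(d,c)$ for the right-hand side. This local identity is the key tool. I also note that $i\mapsto 3-i$ swaps $1$ and $2$ and is an involution, and that $(r^*)^*=r$.

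\textbf{Ballot-admissibility.} By Proposition~\ref{prop:ballotcondition} and Theorem~\ref{thm:lrcondition}, I would show that the LR condition transfers. In column terms, $r$ is ballot-admissible iff for every $i$ and every column $c$ carrying an $i+1$ entry but no $i$ entry, there is some $d\le c$ with $f_i(d,c)=0$; this is Proposition~\ref{prop:bsufficient} combined with Proposition~\ref{prop:bondentryamounts}.

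A column $c$ of $r$ with an $i+1$ but no $i$ is exactly a column whose contribution to $f_i$ is $-1$. By the per-column computation in Lemma~\ref{lemma:dualcounts}, its contribution to $f^*_{3-i}$ is then also $-1$, so $r^*(c)$ has a $4-i$ entry and no $3-i$ entry. Existence of $d$ with $f_i(d,c)=0$ is therefore equivalent to existence of $d$ with $f^*_{3-i}(d,c)=0$.

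There is one subtlety: the prefix condition is phrased on words, not on columns. To handle it, I would argue directly that the prefix condition on words is equivalent to the condition on column prefixes of the form $r([1,c])$. The only way it can fail mid-column is at a column containing both $i$ and $i+1$. Since the linear form is read top to bottom, the $i$ comes first in such a column, so a violation inside the column would already appear at the column boundary. This gives ballot-admissibility of $r$ iff ballot-admissibility of $r^*$, and duality supplies the converse direction.

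\textbf{Bonds.} If $d<c$ are bonded in $r$ via $b_i$, then by Proposition~\ref{prop:bondentryamounts}, $c$ is the minimal column $\ge d$ with $f_i(d,c)=0$. The quantities $f_i(d,e)$ and $f^*_{3-i}(d,e)$ agree for every $e$, so $c$ is also the minimal column with $f^*_{3-i}(d,e)=0$.

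To apply Proposition~\ref{prop:bondentryamounts} to $r^*$, I need $r^*$ to be ballot-admissible, which the first part provides. I also need $r^*(d)$ to carry a $3-i$ entry, so that $b_{3-i}$ is actually defined at the right place. This follows because $f_i(d,d)\ge 0$ with $d<c$: if $r(d)$ contributed $0$ to $f_i$, then $f_i(d,d)=0$ would contradict the minimality of $c>d$. Hence $r(d)$ contributes $+1$, and then $r^*(d)$ contributes $+1$ to $f^*_{3-i}$, meaning it has a $3-i$ entry and no $4-i$ entry.

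Thus $d=b_{3-i}(r^*,c)$, so $d$ and $c$ are bonded in $r^*$. Involutivity of $r\mapsto r^*$ gives the converse.

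The main obstacle I expect is the word-versus-column bookkeeping in the ballot argument. Once that reduction is clean, everything else is the summed lemma.
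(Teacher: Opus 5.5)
Your proof is correct and is essentially the paper's argument: sum Lemma~\ref{lemma:dualcounts} over column intervals, then transfer the characterizations in Propositions~\ref{prop:ballotcondition} and~\ref{prop:bondentryamounts}. The one real difference is that you settle ballot-admissibility first, so that Proposition~\ref{prop:bondentryamounts}, which is stated for ballot-admissible ribbons, is legitimately applicable to $r^*$; the paper instead proves bond preservation first and then derives ballot-admissibility from it. You could drop even that dependency, and cover non-ballot-admissible $r$ too, by transferring the defining maximality property directly: $b_i(r,c)$ is the largest $e\le c$ with $f_i(e,c)=0$, and since $f_i(e,c)=f^*_{3-i}(e,c)$ for every $e$ (and $r^*(c)$ contributes $-1$ exactly when $r(c)$ does), that same $e$ is $b_{3-i}(r^*,c)$.
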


\begin{proof}
    Let $r$ be a Fibonacci ribbon or dual ribbon. 
    Suppose that $c < d$ and $c,d$ are bonded in $r$. We will show that $c,d$ are bonded in $r^*$. Since $c < d$, we know that for some $i$, $r(c)$ has an $i$ entry and no $i+1$ entry. Furthermore, $d$ is the minimal column such that $\mu_i(r([c,d]))-\mu_{i+1}(r([c,d]))=0$ by Proposition~\ref{prop:bondentryamounts}. 
    
    Now, since $r(c)$ has an $i$ entry and no $i+1$ entry, $r^*(c)$ has a $3-i$ entry and no $4-i$ entry by Lemma~\ref{lemma:dualcounts}. In addition, for all $x$, we know that $(r([c,x]))^*=r^*([c,x])$, so by Lemma~\ref{lemma:dualcounts}, we can conclude that $d$ is the minimal column such that $r^*([c,d])$ has an equal number of $3-i$ and $4-i$ entries. This means that by definition, $c$ and $d$ are bonded in $r^*$. 

    Since $(r^*)^*=r$, this is sufficient to show that for $c < d$, $c$ and $d$ are bonded in $r$ iff they are bonded in $r^*$. By Proposition~\ref{prop:ballotcondition}, $r$ is ballot-admissible iff for all columns $d$ with an $i+1$ entry and no $i$ entry in $r$, there exists some $c < d$ with $c$ and $d$ bonded. All such columns $d$ correspond by Lemma~\ref{lemma:dualcounts} to columns in $r^*$ with a $4-i$ entry and no $3-i$ entry. Note that $i\mapsto 3-i$ is a bijection and this criterion for ballot-admissibility is expressed in terms of bonds between columns. This is sufficient to conclude that as bonds between columns of $r$ are preserved by $r\mapsto r^*$, so is ballot-admissibility.
\end{proof}

\begin{remark}
    For the remainder of this paper, since $r\mapsto r^*$ preserves both ballot-admissibility and bonds between columns, we will treat ribbons $r$ as interchangeable with their dual ribbons $r^*$.
\end{remark}

\begin{definition}
    Let $r$ be a ballot-admissible ribbon or dual ribbon of length $n$. We say that $r$ is atomic if for all $i\ge 2$, $r([i,n])$ is not ballot-admissible.
\end{definition}

\begin{proposition}\label{prop:atomiccrit}
    A ballot-admissible ribbon $r$ is atomic iff for all columns $c\ge 2$ in $r$, there exist some columns $a,b$ in $r$ where $a < c\le b$ and $a$ and $b$ are bonded. 
\end{proposition}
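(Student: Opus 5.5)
The plan is to reduce atomicity to a statement about single suffixes: for a ballot-admissible ribbon $r$ of length $n$ and a column $2\le c\le n$, I will show that $r([c,n])$ is ballot-admissible if and only if no bonded pair $a<b$ of $r$ satisfies $a<c\le b$. Negating this for every $c\ge 2$ gives exactly the criterion in the statement, by the definition of atomic.

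First I fix the setting for the suffix. By the Remark on removing columns, $r([c,n])$ is a ribbon or dual ribbon; since we treat the two interchangeably, Proposition~\ref{prop:ballotcondition} applies to it. Its columns are indexed so that the linear form of $r([c,n])$ is the suffix of the linear form of $r$ beginning at column $c$. Hence the counts $\mu_i(r([y,d]))$ for $c\le y\le d$ are the same whether computed in $r$ or in $r([c,n])$.

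The key step compares the matching maps on $r$ and on the suffix. Fix $i$ and a column $d\ge c$ with an $i+1$ entry. The map $b_i(r([c,n]),d)$ is the maximal $y\in[c,d]$ with $\mu_i(r([y,d]))=\mu_{i+1}(r([y,d]))$, while $b_i(r,d)$ is the maximal such $y\in[1,d]$; this map exists because $r$ is ballot-admissible. There are two cases. If $b_i(r,d)\ge c$, the two maxima coincide. If $b_i(r,d)<c$, no admissible $y\ge c$ exists, since any such $y$ would contradict maximality in $r$. So $b_i$ is undefined at $d$ in the suffix exactly when $b_i(r,d)<c$.

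In that second case $d\ne b_i(r,d)$, because $d\ge c>b_i(r,d)$. So $a=b_i(r,d)$ and $b=d$ form a bonded pair with $a<c\le b$. Conversely, any bonded pair $a<b$ with $a<c\le b$ has $a=b_i(r,b)$ for some $i$ with $b\ge c$, so $b_i$ is undefined at $b$ in the suffix. Combined with Proposition~\ref{prop:ballotcondition}, this proves the claimed equivalence.

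The main obstacle I expect is the bookkeeping between letter-level matchings (Definition~\ref{def:matching}) and column-level matchings. In particular, I must confirm that columns containing both an $i$ and an $i+1$ entry satisfy $b_i(d)=d$ in both $r$ and the suffix, so they never create a failure. I must also confirm that the column indexing of $r([c,n])$ really corresponds to a suffix of the reading word, so the counts are literally equal. Once these points are checked, the argument is immediate.
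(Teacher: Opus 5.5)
Your proof is correct and follows essentially the same route as the paper. Both show, for each $c\ge 2$, that $r([c,n])$ is ballot-admissible iff no bond $a<b$ of $r$ satisfies $a<c\le b$. The key observation in both is that, for a column $d\ge c$ with an $i+1$ entry, the maximal balancing start in $[c,d]$ equals $b_i(r,d)$ when $b_i(r,d)\ge c$, and does not exist when $b_i(r,d)<c$. The bookkeeping points you flag (columns containing both $i$ and $i+1$ satisfy $b_i(d)=d$, and $r([c,n])$ reads as a suffix of the word of $r$) are routine and hold as you expect.
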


\begin{proof}
    Let $r$ be ballot-admissible and of length $n$. Then, $r$ is atomic iff $r([c,n])$ is not ballot-admissible for any $c\ge 2$.
    
    Now, let $c\ge 2$ be arbitrary. By Proposition~\ref{prop:ballotcondition}, $r([c,n])$ is ballot-admissible iff all columns $d\in [c,n]$ with an $i+1$ entry and no $i$ entry are bonded to some column $e < d$ in $r([c,n])$. By definition, this is equivalent to there existing some maximal column $e\in [c,d-1]$ such that $\mu_i(r([e,d]))=\mu_{i+1}(r([e,d]))$. Now, note that $e$ is the maximal column in $[c,d-1]$ where $\mu_i(r([e,d]))=\mu_{i+1}(r([e,d]))$ iff $e$ is the maximal such column in $[1,d-1]$. As such, $d$ is bonded to some $e < d$ in $r([c,n])$ iff $d$ is bonded to $e$ in $r$. From this, we see that $d$ is bonded to some $e < d$ in $r([c,n])$ iff $d$ is bonded to some $e\in [c,d-1]$ in $r$.

    Thus, we conclude that $r([c,n])$ is ballot-admissible iff for all columns $d\in [c,n]$ where $r(d)$ has an $i+1$ entry and no $i$ entry, $d$ is bonded in $r$ to some $e\in [c,d-1]$. Now, we note that in $r$, since $r$ is ballot-admissible, any such $d$ is bonded to some $e\in [1,d-1]$. As such, if $r([c,n])$ is not ballot-admissible iff there exists some $d\in [c,n]$ bonded in $r$ to some $e\in [1,c-1]$. 

    Now, we see that $r$ is atomic iff for all $c\ge 2$, $r([c,n])$ is not ballot-admissible. Equivalently, by our argument above, $r$ is atomic iff for all $c\ge 2$, there exist some $e,d$ such that $e$ and $d$ are bonded in $r$ and $e < c\le d$. This is our desired condition for $r$ to be atomic.
\end{proof}

\subsection{Peakless Motzkin Paths}
\begin{definition}
    Let $N=(1,1)$, $E=(1,0)$ and $S=(1,-1)$. We define a peakless Motzkin path of length $n$ as a lattice walk on $\mathbb{Z}^2$ with steps $N$, $E$, and $S$, starting at $(0,0)$, ending at $(n,0)$ and always staying at $y\ge 0$, with the pattern $NS$ forbidden.
\end{definition}

\begin{definition}
    For $n\ge 0$, let $a_n$ be the number of peakless Motzkin paths of length $n$. Note that we define that $a_0=1$.
\end{definition}

\begin{proposition}\label{prop:motzkinrecurrence}
    For all $n\ge 0$, $a_{n+1} = a_n + \sum_{j=1}^{n-1} a_j*a_{n-1-j}$.
\end{proposition}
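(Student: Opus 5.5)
The plan is a first-step (first-return) decomposition of a peakless Motzkin path of length $n+1$, splitting on whether its first step is $E$ or $N$. Note that the first step cannot be $S$, since the path must stay at $y\ge 0$. The two cases will give the term $a_n$ and the sum $\sum_{j=1}^{n-1} a_j a_{n-1-j}$ respectively, and in each case I will build an explicit bijection.

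\emph{First step $E$.} Deleting the initial $E$ leaves a lattice walk of length $n$ from height $0$ to height $0$ that stays weakly above the axis. It contains no $NS$, since any $NS$ in it was already an $NS$ in the original path. Conversely, prepending $E$ to any peakless Motzkin path of length $n$ creates no new $NS$ factor, because the new first letter is $E$. So this case contributes exactly $a_n$. This also settles $n=0$: there $a_1=a_0=1$ and the sum is empty.

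\emph{First step $N$.} Let $m$ be the first time after $0$ at which the path returns to height $0$. Then the path factors uniquely as $N\,P\,S\,Q$, where $P$ is the portion strictly between the first step and step $m$, translated down by one unit, and $Q$ is the remainder. By minimality of $m$, the translate $P$ stays at $y\ge 0$, so $P$ is a Motzkin path of some length $j$. The path $Q$ is a Motzkin path of length $n+1-2-j=n-1-j$. Both $P$ and $Q$ are peakless, since they are factors of a peakless path.

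The key point is that $j\ge 1$: if $j=0$ the path would begin with $NS$, which is forbidden. Together with $n-1-j\ge 0$, this gives $1\le j\le n-1$. For the converse, given a peakless $P$ of length $j\ge 1$ and a peakless $Q$ of length $n-1-j$, I must check that $N\,P\,S\,Q$ (with $P$ raised by one) is a peakless Motzkin path. This requires checking the two new junctions.
\begin{itemize}
\item[(a)] At the junction $N P$: the first step of $P$ is not $S$, since $P$ starts at height $0$ and stays nonnegative. Hence the pair formed there is not $NS$.
\item[(b)] At the junction $P S$: the last step of $P$ is not $N$, since $P$ ends at height $0$ and never goes negative, so its last step is $E$ or $S$. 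Hence the pair formed there is not $NS$.
\end{itemize}
The junction $S Q$ cannot form the factor $NS$ at all, and the height condition is immediate. So this case contributes $\sum_{j=1}^{n-1} a_j a_{n-1-j}$.

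Summing the two disjoint cases gives the recurrence. The only delicate step is the peak bookkeeping at the junctions in the $N$ case, namely forcing $j\ge1$ and verifying (a) and (b). I expect this to be the main place where care is needed, but it is straightforward.
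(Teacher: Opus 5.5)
Your proof is correct and is essentially the paper's argument reflected left to right: you split on the first step and use the first return to the axis, $N\,P\,S\,Q$, whereas the paper splits on the last step ($E$ or $S$) and uses the last $N$ step starting at height $0$, writing $p=p_1Np_2S$. Your junction checks (a) and (b) also make explicit the converse direction, that every such concatenation is again peakless, which the paper leaves as ``easy to see.''
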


\begin{proof}
    This is given by A004148 in~\cite{oeis}. Alternatively, we provide the following proof. Given some peakless Motzkin path $p$ of length $n+1$, either $p$ ends on $N$, $E$, or $S$. It is impossible that $p$ ends on $N$, as $p$ ends at $y=0$ and cannot go below $y=0$. Next, it is easy to see that peakless Motzkin paths of length $n+1$ ending on $E$ are in bijection with peakless Motzkin paths of length $n$ by removing the final $E$ step, so there are $a_n$ such paths.

    Finally, say that a peakless Motzkin path $p$ ends on $S$. Then, there must be at least one $N$ step in $p$. Since $p$ cannot go above $y=0$ without an $N$ step at $y=0$ existing, there must be an $N$ step which starts at $y=0$ in $p$. Take the last such step. Then, $p=p_1 N p_2 S$ where $p_2 S$ follows this final $N$ step. We note that since this is the final $N$ step at $y=0$, $p_2$ cannot ever reach $y=0$. Since $p_2$ starts at $y=1$ and ends at $y=1$, it is easy to see that $p_2$ is itself a peakless Motzkin path. Likewise, $p_1$ starts and ends at $y=0$ without going below $y=0$, so $p_1$ is also a peakless Motzkin path.

    We note that given any peakless Motzkin path $p$ which ends on $S$, we can uniquely define $p_1$ and $p_2$. We will count peakless Motzkin paths $p$ where $p_2$ has length $j$. Since $NS$ is forbidden, $p_2$ must have length $j\ge 1$. In addition, $j\le n-1$. Now, we the peakless Motzkin paths $p$ which have $p_2$ of some length $j$ correspond exactly to a choice of a peakless Motzkin path $p_1$ of length $n-1-j$ and a peakless Motzkin path $p_2$ of length $j$. As such, there exist $a_j * a_{n-1-j}$ such paths with $p_2$ of length $j$.

    We have now counted all peakless Motzkin paths of length $n+1$. This gives us
    \[a_{n+1} = a_n + \sum_{j=1}^{n-1} a_j * a_{n-1-j}\]

    as desired.
\end{proof}

\section{Constructing the Bijection}\label{sec:bijection}
\subsection{Algorithmic Steps}
\begin{definition}
    Let $r_0$ be a ballot-admissible ribbon. Let $r_0(c_0)$ be $i$-unbonded. Let $c_0'$ be the minimal column $ > c_0$ such that $r_0(c_0')$ is $i$-unbonded. We define $f(r_0,c_0)=(r_1,c_1)$ as follows. For all $c\ne c_0'$, $r_1(c)=r_0(c)$. We define $r_1(c_0')$ by changing the $i$ entry in $r_0(c_0')$ to $i+1$. Then, if $r_1(c_0')$ is unbonded, let $c_1=c_0'$. Otherwise, let $c_1$ be the largest column such that $r_0(c_1)$ is bonded but $r_0(c_1)$ is unbonded.
\end{definition}

\begin{remark}
    Note that $f$ is a partial function, as $f$ is only defined when there exists some column $d > c_0$ where $r_0(d)$ is $i$-unbonded.
\end{remark}

\begin{proposition}\label{prop:fwelldefined}
    Let $f(r_0,c_0)=(r_1,c_1)$. Then, one of the following holds:
    \begin{enumerate}
        \item[(i)] $r_1$ is a well-defined ballot-admissible ribbon and $c_1$ is a well-defined unbonded column in $r_1$.
        \item[(ii)] $r_0(c_0)=(1)$ or $(12)$ and $c_0'=c_0+1$.
    \end{enumerate}
\end{proposition}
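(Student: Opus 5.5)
Understanding the statement comes first. The definition of $f$ contains an evident typo. I will read $c_1$, in the non-unbonded case, as the largest column $c_1$ such that $r_1(c_1)$ is bonded in $r_0$ but unbonded in $r_1$ (or the reverse). The whole plan depends on this reading and on the well-definedness being about this $c_1$.

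The proof splits into two parts. First, $r_1$ is a Fibonacci ribbon (semistandard). Second, $r_1$ is ballot-admissible with a well-defined unbonded $c_1$. Each potential failure should be pinned down to the exceptional configuration (ii).

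For semistandardness I will use Proposition~\ref{prop:ribboncondition}. Changing the $i$ entry of $r_0(c_0')$ to $i+1$ keeps condition (i), since an $i$-unbonded column has an $i$ entry and no $i+1$ entry. So it remains to check that no $123$ pattern is created.

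\begin{itemize}
\item The only new patterns involve column $c_0'$ and one of its neighbours, giving the shapes $(12)(3)$ or $(1)(23)$.
\item A new $2$ or $3$ in $c_0'$ can complete such a pattern only if $c_0'$ is adjacent to a column $(12)$ or $(1)$ on the appropriate side.
\item Minimality of $c_0'$, together with the bond structure, should force that column to be $c_0$ itself, with $c_0'=c_0+1$. This is exactly alternative (ii).
\end{itemize}

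For ballot-admissibility I will use Proposition~\ref{prop:ballotcondition} and Proposition~\ref{prop:bondchangecrit}.

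\begin{itemize}
\item By Proposition~\ref{prop:bondchangecrit}, bonds lying entirely on one side of $c_0'$ are unchanged.
\item Only the counting functions $\mu_i-\mu_{i+1}$ over intervals containing $c_0'$ change: $\mu_i$ drops by one and $\mu_{i+1}$ rises by one.
\item Because $c_0$ is $i$-unbonded and $c_0<c_0'$, the prefix excess of $i$ over $i+1$ at $c_0'$ is positive. More precisely, every unbonded $i$ column contributes $+1$ to the excess of $i$ over $i+1$ in all later prefixes. Since $c_0$ is one such column, the excess stays positive after the change, so it remains $\ge 0$.
\item The matching $b_{i-1}$ (between $i-1$ and $i$) loses one $i$, which only helps. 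The matching $b_{i+1}$ gains an $i+1$, which is harmless unless $i+1$ is matched against $i+2$. With entries in $[3]$, that concern arises only for $i=1$, where I must check $\mu_2\ge\mu_3$. That prefix inequality could fail only when the new $2$ sits just after a $(1)$ or $(12)$ column, again giving (ii).
\item By Theorem~\ref{thm:lrcondition}, every $b_j$ is then a total function, so $r_1$ is ballot-admissible.
\end{itemize}

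For $c_1$, the first case is direct: if $r_1(c_0')$ is unbonded, then $c_1=c_0'$ works.

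Otherwise $c_0'$, which now carries an $i+1$, bonds to some earlier column. By Proposition~\ref{prop:naturalmatch}(ii) and the noncrossing structure, this bond displaces a chain of bonds between $c_0$ and $c_0'$. Exactly one column $d\in[c_0,c_0')$ that was bonded in $r_0$ becomes unbonded in $r_1$, namely the outermost $i$ column left without a partner after re-matching. I will show that the set of such columns is nonempty and finite, so the maximum $c_1$ exists. I will then check that it is unbonded in $r_1$ via Corollary~\ref{cor:bondentryamounts}, by computing the sign of the excess on $[c_1,n]$.

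The main obstacle is this last step, together with the $i=1$ interaction with $b_2$. Tracking how a single change re-routes the noncrossing matching requires careful interval-counting. I would first verify small cases explicitly and then argue using the lattice-path interpretation of the excess function $f(z)$ from Proposition~\ref{prop:bsufficient}.
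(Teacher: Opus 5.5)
Your semistandardness step matches the paper's argument in outline. The existence of $c_1$, however, rests on a mechanism that is wrong.

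Changing the $i$ in $c_0'$ to $i+1$ leaves the $i$-matching intact except for the new bond $c_0$--$c_0'$ (Proposition~\ref{prop:faddedbond}). Since $\mu_i-\mu_{i+1}$ only decreases on intervals through $c_0'$, no column can become $i$-unbonded. So there is no ``chain of displaced bonds'' and no ``outermost $i$ column left without a partner.'' The column that loses its bond belongs to the other matching, $j=3-i$, and it need not lie in $[c_0,c_0')$.

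Concretely, take $r_0=(12)(1)(12)(1)(13)(3)$, $i=1$, $c_0=2$. Then $c_0'=4$ and $r_1=(12)(1)(12)(2)(13)(3)$. Column $4$ is bonded to column $5$ via $b_2$, so we are in the ``otherwise'' case. The only column bonded in $r_0$ but unbonded in $r_1$ is column $1$, a $(12)$ that was $b_2$-bonded to column $6$ and is now $2$-unbonded. Thus $c_1=1<c_0$.

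Your reading of the case split is also off. In $r_1$, $c_0'$ is always bonded to the earlier column $c_0$; the split is really whether $r_1(c_0')$ is $(3-i)$-unbonded.

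The missing idea is a count. In a ballot-admissible ribbon, the number of $j$-unbonded columns equals $\mu_j-\mu_{j+1}$ of the whole ribbon. Since $j=i\pm1$, the change either adds a $j$ or removes a $j+1$, so this number goes up by one. Hence some column is $j$-unbonded in $r_1$ but not in $r_0$. In the ``otherwise'' case that column is not $c_0'$, so its entries are unchanged, and it must have been bonded in $r_0$.

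The ballot-admissibility step also has a counting slip. You correctly note that the change lowers $\mu_i-\mu_{i+1}$ by $2$ on every prefix $[1,c]$ with $c\ge c_0'$. But you then use only $c_0$'s contribution of $+1$, which gives $\ge -1$ after the change, not $\ge 0$. You also need that $c_0'$ is itself $i$-unbonded in $r_0$. Applying Corollary~\ref{cor:bondentryamounts} to $[c_0,c_0'-1]$ and to $[c_0',c]$, together with ballot-admissibility on $[1,c_0-1]$, gives excess at least $0+1+1=2$ before the change. The excess does not ``stay positive'': in the example above it falls from $2$ to $0$ on $[1,4]$.

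Your worry about $\mu_2\ge\mu_3$ for $i=1$ is unfounded. The new $2$ only increases $\mu_2-\mu_3$, so every inequality with index $\neq i$ can only improve. Configuration (ii) is purely a semistandardness obstruction, not a ballot one.

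Finally, Proposition~\ref{prop:bondchangecrit} assumes both ribbons are already ballot-admissible. It therefore cannot be used on the way to proving that $r_1$ is.
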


\begin{proof}
    Let $r_0(c_0)$ be $i$-unbonded. We will show that if (ii) is not the case, then (i) holds. Suppose that (ii) is not the case.

    First, we show that $r_1$ is a Fibonacci ribbon or dual ribbon. For $c\ne c_0'$, $r_1(c)=r_0(c)$. For $c_0'$, if $r_0(c_0')=(i)$ then $r_0(c_0')=(i+1)$. Meanwhile, if $r_0(c_0')$ has all entries except for $i+1$ then $r_1(c_0')$ has all entries except for $i$, still in ascending order as we change the $i$ entry to an $i+1$ entry. Finally, we show that there are no $(12)(3)$ or $(1)(23)$ patterns in $r_1$. We know that $r_1(c_0')$ has an $i+1$ entry and no $i$ entry for some $i$, so $r_1(c_0')\ne (1)$ or $(12)$. 
    
    Now, consider the case where $r_1(c_0')=(3)$ and $r_0(c_0')=(2)$. Suppose that $r_1(c_0'-1)=(12)$. Then, $r_0(c_0'-1)=(12)$. Since (ii) is not the case by our assumption, $c_0 < c_0'-1 < c_0'$, and so $r_0(c_0'-1)=(12)$ must be bonded by minimality of $c_0'$. It then follows that $r_0(c_0'-1)$ is bonded to some $d > c_0$, so by Proposition~\ref{prop:naturalmatch}, $r_0(c_0')=(2)$ must also be bonded. This is impossible, so we conclude that $r_1(c_0'-1)\ne (12)$. An analogous argument holds in the case where $r_1(c_0')=(23)$. We may now conclude that $r_1$ has no $123$ patterns, so by Proposition~\ref{prop:ribboncondition}, $r_1$ is a Fibonacci ribbon or dual ribbon.

    Next, we show that $r_1$ is ballot-admissible. To do this, consider some arbitrary column $c\ge 1$. Let $r'=r_1([1,c])$. We want to show that for all $j$, $\mu_j(r')\ge \mu_{j+1}(r')$. If $c < c_0'$, then $r'=r_0([1,c])$, so since $r_0$ is ballot-admissible, we know that this is the case. Next, consider the case where $c \ge c_0$. Since $r_1$ is constructed by changing a single $i$ entry in $r_0$ to $i+1$, for $j\ne i$, 
    \[\mu_j(r')-\mu_{j+1}(r')\ge \mu_j(r_0([1,c]))-\mu_{j+1}(r_0([1,c]))\ge 0.\]

    Now, we know that for $j=i$, $c_0 < c_0'$ and $c_0' \le c$ are both $i$-unbonded. Then, by Corollary~\ref{cor:bondentryamounts}, $\mu_i(r_0([c_0,c_0'-1])) > \mu_{i+1}(r_0([c_0,c_0'-1]))$ and $\mu_i(r_0([c_0',c])) > \mu_{i+1}(r_0([c_0',c]))$. Also, since $r_0$ is ballot-admissible, $\mu_i(r_0([1,c_0-1]))\ge \mu_{i+1}(r_0([1,c_0-1]))$. Then, as $[1,c]=[1,c_0-1]+[c_0,c_0'-1]+[c_0',c]$, 
    \[\mu_i(r_0([1,c]))-\mu_{i+1}(r_0([1,c]))\ge 2\]

    And so, since we construct $r'$ by changing a single $i$ entry to an $i+1$ entry,
    \[\mu_i(r')-\mu_{i+1}(r')=\mu_i(r_0([1,c]))-\mu_{i+1}(r_0([1,c]))-2\ge 0\]

    as desired. Thus, we conclude that $r_1$ is ballot-admissible. Finally, we will show that $c_1$ is well-defined. If $r_1(c_0')$ is unbonded, then we know that $c_1=c_0'$ is well-defined. Next, consider the other case. Let $j=3-i$. We note that $j=i\pm 1$, so either there exists one more $j$ entry or one fewer $j+1$ entry in $r_1$ than $r_0$. In either case, it follows that there exists some column which is $j$-unbonded in $r_1$ but not $j$-unbonded in $r_0$. Since by our assumption, $r_1(c_0')$ is not $j$-unbonded, any such column which is $j$-unbonded in $r_1$ but not $r_0$ must be bonded in $r_0$. Thus, there exists some column $c$ such that $r_0(c)$ is bonded and $r_1(c)$ is unbonded. This is sufficient to conclude that $c_1$ is well-defined.
\end{proof}

\begin{remark}
    For the remainder of this section, we only say that $f$ is well-defined when $r_1$ is a ballot-admissible Fibonacci ribbon. When $r_1$ would not be semistandard, we do not define $f$.
\end{remark}

\begin{proposition}\label{prop:fremovedbond}
    Let $f(r_0,c_0)=(r_1,c_1)$. If $r_0(c)$ is bonded and $r_1(c)$ is unbonded, then $c=c_1$.
\end{proposition}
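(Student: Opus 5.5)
The plan is to treat each index $j\in\{1,2\}$ separately. I will describe the bonds for index $j$ as a parenthesis matching: columns with a $j$ entry and no $j+1$ entry are openers, columns with a $j+1$ entry and no $j$ entry are closers, and $b_j$ matches each closer to an opener. By Proposition~\ref{prop:naturalmatch} this matching is noncrossing and nearest-available. By Proposition~\ref{prop:ribboncondition}(i), every column is an opener or closer for at most one index. So "$c$ bonded" and "$c$ unbonded" can each be read off a single index, and I can track, for each $j$, which columns change status when $r_0(c_0')$ has its $i$ entry replaced by $i+1$. Throughout I read the definition of $c_1$ as "the largest column with $r_0(c_1)$ bonded and $r_1(c_1)$ unbonded". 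The claim then reduces to showing that at most one column goes from bonded to unbonded, and that no such column exists when $c_1=c_0'$.

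\emph{Index $j=i$.} In $r_0$, column $c_0'$ is an unmatched opener. In $r_1$ it becomes a closer.
\begin{itemize}
\item Since $c_0'$ is unmatched in $r_0$, noncrossing (Proposition~\ref{prop:naturalmatch}(i)) forces every closer after $c_0'$ to be matched to an opener after $c_0'$. By Proposition~\ref{prop:bondchangecrit}(ii) these bonds persist.
\item Bonds lying entirely before $c_0'$ persist by Proposition~\ref{prop:bondchangecrit}(i).
\item The new closer at $c_0'$ takes the nearest unmatched opener before it. By minimality of $c_0'$ there is no $i$-unbonded column strictly between $c_0$ and $c_0'$, so this opener is $c_0$, via Proposition~\ref{prop:bondentryamounts} and Corollary~\ref{cor:bondentryamounts}.
\end{itemize}
Hence for index $i$ no column goes from bonded to unbonded.

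\emph{Index $j=3-i$.} Here the change either inserts a new opener at $c_0'$ (case $i=1$: a $2$ appears and a $1$-type closer disappears) or deletes a closer at $c_0'$ (case $i=2$: a $2$ disappears). I will verify in each case that $c_0'$ carried the relevant letter for $j$ only as this one symbol, using Proposition~\ref{prop:ribboncondition}(i).
\begin{itemize}
\item \emph{Deleting a closer.} Only its former partner $d$ loses its bond. Every other bond either lies entirely on one side of $c_0'$ (Proposition~\ref{prop:bondchangecrit}) or nests around it, and nesting bonds keep equal counts on their interval. In this case $c_0'$ is bonded in $r_1$ only if it is bonded for index $i$, so at most one column changes status.
\item \emph{Inserting an opener.} I run the standard Dyck-word argument. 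Take the first closer $e>c_0'$ whose interval count from $c_0'$ reaches zero. Either $c_0'$ remains unmatched, and then no bond changes, because for each bond $(d,e)$ with $d<c_0'<e$ the counts on $[d,e]$ change only through the new opener and minimality is preserved. Or $c_0'$ steals a chain of partners, and the net effect is that exactly one previously matched opener becomes unmatched. This is because in $r_1$ all closers are still matched (by ballot-admissibility, Proposition~\ref{prop:fwelldefined}), while the number of openers went up by one.
\end{itemize}
In either subcase, a monotonicity check shows that no previously unmatched opener becomes matched. The check uses the prefix-count characterization of unbondedness from Corollary~\ref{cor:bondentryamounts}, since the count differences on intervals not containing $c_0'$ are unchanged.

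Combining the two indices, at most one column $c$ has $r_0(c)$ bonded and $r_1(c)$ unbonded, so it must be the maximal one, $c_1$. If $r_1(c_0')$ is unbonded, then $c_0'$ is not matched for index $i$, which the index-$i$ analysis rules out unless the stolen-chain scenario does not occur. So in that case the count of unmatched openers for $3-i$ increases exactly by $c_0'$ itself, and no other column changes status. I expect the main obstacle to be the inserting-an-opener subcase: making rigorous that the cascade of rematched pairs leaves exactly one formerly bonded column unbonded. I would handle it by computing, for each opener $d$, the sign of $\mu_j-\mu_{j+1}$ on $[d,n]$ before and after the change, instead of following the cascade step by step.
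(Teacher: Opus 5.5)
The strategy of tracking each index as a parenthesis matching is a valid alternative to the paper's argument. Your reduction to ``at most one column goes from bonded to unbonded, and none if $c_1=c_0'$'' is correct, and so is your index-$i$ analysis. The global count in the inserting subcase also works: every closer is matched in both $r_0$ and $r_1$, so the number of unmatched $(3-i)$-openers rises by exactly one. And no old unmatched opener gets matched, because $\mu_{3-i}-\mu_{4-i}$ only increases on intervals through $c_0'$.

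\textbf{The deleting-a-closer subcase is argued incorrectly.} Two of your claims there are false:
\begin{itemize}
\item that only the former partner $d$ of the deleted closer loses its bond;
\item that bonds nesting around $c_0'$ keep equal counts.
\end{itemize}
Deleting a closer inside $[d',e']$ makes $\mu_j-\mu_{j+1}$ on $[d',e']$ equal to $1$, and the enclosing closers get rematched inward. For a concrete case, take $r_0=(1)(12)(1)(12)(2)(13)(3)$ with $c_0=1$ and $i=1$, so $c_0'=6$. In $r_0$ the index-$2$ bonds are $5$--$6$ and $4$--$7$. Now $r_1=(1)(12)(1)(12)(2)(23)(3)$, and $\mu_2(r_1([4,7]))=3\neq 2=\mu_3(r_1([4,7]))$. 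The index-$2$ bond in $r_1$ is $5$--$7$. So the old partner $5$ stays bonded, and it is column $4$ that becomes $2$-unbonded, giving $c_1=4$.

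The fix is to run your counting argument here as well. The openers are unchanged, the closers drop by one, and all closers remain matched in $r_1$. So exactly one opener loses its partner, and monotonicity rules out any opener gaining one.

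A few smaller inaccuracies:
\begin{itemize}
\item The subcases are not tied to the value of $i$. The changes $(1)\to(2)$ and $(12)\to(13)$ insert an opener, while $(13)\to(23)$ and $(2)\to(3)$ delete a closer, so both subcases occur for each $i$.
\item In the inserting subcase with $c_0'$ unmatched, straddling bonds do not ``preserve minimality''. Rather, none exist: in $r_0$, $\mu_{3-i}-\mu_{4-i}\ge 0$ on every $[c_0',z]$, but it is $\le -1$ on $[c_0',e]$ for any bond $(d,e)$ with $d<c_0'<e$.
\item In your last paragraph, $c_0'$ \emph{is} matched for index $i$ (to $c_0$). What $r_1(c_0')$ unbonded means is that $c_0'$ is an unmatched $(3-i)$-opener. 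It is therefore the single new unmatched opener, so no other column loses a bond.
\end{itemize}

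For comparison, the paper's proof is purely local and involves no rematching. It first shows, from the sign of the count change on $[c,c']$, that any column losing its bond must become $(3-i)$-unbonded. If two such columns $c<c_1$ existed, then in $r_1$ the difference $\mu_j-\mu_{j+1}$ is exactly $1$ on $[c,c']$ and $\ge 1$ on $[c_1,c']$. Hence it is $\le 0$ on $[c,c_1-1]$, which forces $c$ to be bonded in $r_1$, a contradiction. Your repaired argument instead needs ballot-admissibility of $r_1$ from Proposition~\ref{prop:fwelldefined}. In return it gives the sharper fact that exactly one $(3-i)$-opener changes status, which yields existence and uniqueness of $c_1$ at once.
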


\begin{proof}
    Suppose that $r_0(c)$ is bonded and $r_1(c)$ is unbonded. Let $r_0(c_0)$ be $i$-unbonded and let $r_1(c)$ be $j$-unbonded. Since $r_0(c_0')$ is unbonded by definition, $c\ne c_0'$. In addition, let $c$ be bonded in $r_0$ to $c'$. Then, by Proposition~\ref{prop:bondchangecrit}, $c < c_0' \le c'$. By Proposition~\ref{prop:bondentryamounts}, $\mu_j(r_0([c,c']))=\mu_{j+1}(r_0([c,c']))$. However,  $c$ is $j$-unbonded in $r_1$. As a result, $\mu_j(r_1([c,c'])) > \mu_{j+1}(r_1([c,c']))$ by Corollary~\ref{cor:bondentryamounts}. It follows that $j=i\pm 1$, so $j\ne i$. In addition, since we know that $i,j\in \{1,2\}$, it follows that $j=3-i$. In this way, we see that any column $c$ where $r_0(c)$ is bonded and $r_1(c)$ is unbonded must be $j$-unbonded in $r_1$ for $j=3-i$.

    Note that if $c_1=c_0'$, then by definition, $r_1(c_1)$ is $j$-unbonded, and if $c_1\ne c_0'$ then $r_1(c_1)$ is also $j$-unbonded by the argument above.

    Now, suppose for the sake of contradiction that $c\ne c_1$. By definition of $f$, it follows that $c < c_1$. Furthermore, either $c_1 = c_0'$ or $r_0(c_1)$ is bonded while $r_1(c_1)$ is unbonded, so $c_1 \le c_0'\le c'$. Since $r_1(c_1)$ is $j$-unbonded, by Corollary~\ref{cor:bondentryamounts}, $\mu_j(r_1([c_1,c'])) > \mu_{j+1}(r_1([c_1,c']))$. In addition, since $c$ and $c'$ are bonded in $r_0$, $\mu_j(r_0([c,c']))=\mu_{j+1}(r_0([c,c']))$ by Proposition~\ref{prop:bondentryamounts}. Furthermore, as $j=i\pm 1$, we either add exactly one additional $j$ entry or remove one $j+1$ entry from $c_0'\in [c,c']$ when we construct $r_1$ from $r_0$. As such, $\mu_j(r_1([c,c']))=\mu_{j+1}(r_1([c,c']))+1$. In addition, since $c_1\in [c+1,c']$ is $j$-unbonded in $r_1$, we see that $\mu_j(r_1([c_1,c'])) > \mu_{j+1}(r_1([c_1,c']))$ by Corollary~\ref{cor:bondentryamounts}. As $[c,c']-[c_1,c']=[c,c_1-1]$, it follows that $\mu_j(r_1([c,c_1-1]))\le \mu_{j+1}(r_1([c,c_1-1]))$. By Corollary~\ref{cor:bondentryamounts}, we conclude that $c$ is bonded to some column $\le c_1-1$ in $r_1$, which is impossible, as $c$ is unbonded in $r_1$. As such, we have a contradiction. By contradiction, it must be the case that for $c$ such that $r_0(c)$ is bonded and $r_1(c)$ is unbonded, $c=c_1$.
\end{proof}

\begin{proposition}\label{prop:faddedbond}
    Let $f(r_0,c_0)=(r_1,c_1)$. Then, $c_0$ and $c_0'$ are bonded in $r_1$. Furthermore, for $c\ne c_0'$, if $r_0(c)$ is unbonded and $r_1(c)$ is bonded, then $c=c_0$.
\end{proposition}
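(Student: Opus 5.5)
The plan is to work throughout with excess functions of the form $\mu_k(r([x,z]))-\mu_{k+1}(r([x,z]))$, in the same way as Corollary~\ref{cor:bondentryamounts}. In that language, a column $x$ with a $k$ entry and no $k+1$ entry is $k$-unbonded exactly when this excess stays $\ge 1$ for all $z\ge x$. By Proposition~\ref{prop:bondentryamounts}, $x$ is bonded to $d$ exactly when $d$ is the first $z$ at which the excess hits $0$. By Proposition~\ref{prop:fwelldefined} we may assume $r_1$ is ballot-admissible, so Propositions~\ref{prop:bondentryamounts} and~\ref{prop:naturalmatch} and Corollary~\ref{cor:bondentryamounts} all apply to $r_1$.

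\emph{First claim.} I would first show that every $i$-bond in $r_0$ touching a column of the open interval $(c_0,c_0')$ has both endpoints inside that interval.
\begin{itemize}
\item Consider a column $d\in(c_0,c_0')$ with an $i$ entry and no $i+1$ entry. By minimality of $c_0'$, $d$ is bonded to some $e>d$. If $e\ge c_0'$, then Proposition~\ref{prop:naturalmatch}(ii), applied with $d_0=d$ and the $i$ entry of $c_0'$, forces $c_0'$ to be bonded, which contradicts its unbondedness.
\item Consider a column in $(c_0,c_0')$ with an $i+1$ entry and no $i$ entry. Its partner cannot lie before $c_0$, by Proposition~\ref{prop:naturalmatch}(ii) applied to the $i$ entry of $c_0$. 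Its partner is not $c_0$, since $c_0$ is unbonded.
\end{itemize}
Hence the $i$ and $i+1$ entries in $(c_0,c_0')$ are perfectly matched by noncrossing bonds. The $i$-excess of $r_0$ on $[c_0,z]$ therefore equals $1$ at $z=c_0'-1$ and is $\ge 1$ for all $c_0\le z<c_0'$.

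In $r_1$ only column $c_0'$ changes, and it now has an $i+1$ entry and no $i$ entry. So the $i$-excess of $r_1$ on $[c_0,z]$ is $\ge 1$ for $z<c_0'$ and equals $0$ at $z=c_0'$. Proposition~\ref{prop:bondentryamounts} then gives $b_i(c_0')=c_0$ in $r_1$.

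\emph{Second claim.} Let $c\neq c_0'$ be unbonded in $r_0$ and bonded in $r_1$. Since $c\ne c_0'$, its entries are unchanged, so $c$ is $k$-unbonded in $r_0$ for some $k$ and has the same entry pattern in $r_1$. Let $d>c$ be its partner in $r_1$. If $d<c_0'$ or $c>c_0'$, then Proposition~\ref{prop:bondchangecrit}, applied with the roles of $r_0$ and $r_1$ swapped, shows that $c$ and $d$ are already bonded in $r_0$, which is a contradiction. So $c<c_0'\le d$, and I split on $k$.
\begin{itemize}
\item \emph{Case $k=3-i$.} Changing $i$ to $i+1$ at $c_0'$ either adds a $k$ entry or removes a $k+1$ entry. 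So the $k$-excess of $r_1$ on every $[c,z]$ is at least that of $r_0$, hence $\ge 1$. Thus $c$ stays unbonded, a contradiction.
\item \emph{Case $k=i$ with $c\in(c_0,c_0')$.} This contradicts the minimality of $c_0'$.
\item \emph{Case $k=i$ with $c=c_0$.} This is the desired conclusion.
\item \emph{Case $k=i$ with $c<c_0$.} Split $[c,z]$ at $c_0-1$. The $r_1$-excess on $[c,c_0-1]$ equals the $r_0$-excess there, which is $\ge 1$ because $c$ is unbonded in $r_0$. For the remaining piece $[c_0,z]$: if $z<c_0'$, it is unchanged and $\ge 1$. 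If $z\ge c_0'$, the $r_0$-excess is $\ge 2$, by summing the bounds on $[c_0,c_0'-1]$ and $[c_0',z]$ (both $\ge1$ since $c_0,c_0'$ are unbonded). The $r_1$-excess drops by exactly $2$, so it is $\ge 0$. Hence the $r_1$-excess on $[c,z]$ is always $\ge 1$, and $c$ is unbonded in $r_1$, a contradiction.
\end{itemize}

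The step I expect to be the main obstacle is the first claim, that bonds cannot leave $(c_0,c_0')$, and in particular its use of noncrossing to rule out bonds reaching past $c_0'$ or before $c_0$. The remaining steps are bookkeeping with the excess counts.
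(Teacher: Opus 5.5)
Your proof is correct, but it follows a different route from the paper's. You argue by counting the excess $\mu_i-\mu_{i+1}$; the paper argues structurally.

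\emph{First claim.} The paper takes the partner $d$ of $c_0'$ in $r_1$, which exists because $r_1$ is ballot-admissible, and rules out $d\ne c_0$.
If $d<c_0$, noncrossing (Proposition~\ref{prop:naturalmatch}) makes $c_0$ bonded before $c_0'$ in $r_1$, hence already bonded in $r_0$ by Proposition~\ref{prop:bondchangecrit}.
If $d>c_0$, minimality of $c_0'$ says $d$ is not $i$-unbonded. A bond in $r_0$ from $d$ to a column before $c_0'$ would survive into $r_1$, which is impossible since $d$ is bonded to $c_0'$ there.
You instead prove a lemma entirely inside $r_0$: $i$-bonds touching $(c_0,c_0')$ stay inside that interval. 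This gives the exact excess profile on $[c_0,c_0']$, and Proposition~\ref{prop:bondentryamounts} then reads off the bond.

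\emph{Second claim.} Both proofs first force the bond type to be $i$; your case $k=3-i$ is the paper's opening step. After that the arguments diverge.
The paper takes the partner $c'$ of $c$ in $r_1$ and the partner $e$ of $c'$ in $r_0$. It shows $e>c_0'$, so the bond $(e,c')$ persists into $r_1$ and forces $c=e$, a contradiction.
You split by position instead. The case $c\in(c_0,c_0')$ falls immediately to minimality of $c_0'$. For $c<c_0$ you reuse the budget argument from the ballot-admissibility part of Proposition~\ref{prop:fwelldefined}: the drop of $2$ is absorbed by the two unbonded columns $c_0$ and $c_0'$.

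\emph{Trade-off.} Your version is more uniform and recycles an existing computation, at the cost of a case split on position. The paper's avoids counting and that split, but leans harder on the bond-preservation propositions.

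One small point: in your last case the split at $c_0-1$ is vacuous for $z<c_0$. There, though, $[c,z]$ is unchanged from $r_0$, so the excess is still $\ge 1$ and nothing is lost.
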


\begin{proof}
    Let $r_0(c_0)$ be $i$-unbonded. First, we show that $c_0$ and $c_0'$ are bonded in $r_1$. Let $d$ be the unique column such that $d < c_0'$ and $d$ and $c_0'$ are bonded in $r_1$. By definition of $f$, $r_1(c_0')$ has an $i+1$ entry and no $i$ entry, so $r_1(d)$ must have an $i$ entry and no $i+1$ entry.

    First, if $d < c_0$, then by Proposition~\ref{prop:naturalmatch}, $c_0$ is bonded to some column $< c_0'$ in $r_1$. By Proposition~\ref{prop:bondchangecrit}, $c_0$ is likewise bonded in $r_0$, which is impossible. Meanwhile, if $d > c_0$, then since $c_0'$ is $i$-unbonded in $r_0$, by Proposition~\ref{prop:naturalmatch}, either $d$ is unbonded or $d$ is bonded to some column $<c_0'$ in $r_0$. By definition of $f$, since $c_0 < d < c_0'$, $d$ cannot be $i$-unbonded. In addition, if $d$ is bonded to some column $<c_0'$ in $r_0$, the same is true in $r_1$ by Proposition~\ref{prop:bondchangecrit}, which is impossible, as $d$ is bonded to $c_0'$ in $r_1$ by definition. So, we obtain a contradiction if $d < c_0$ or $d > c_0$. We conclude that $d=c_0$. Thus, $c_0$ and $c_0'$ are bonded in $r_1$.

    Next, say that for $c\ne c_0'$, $r_0(c)$ is $j$-unbonded and $r_1(c)$ is bonded. We want to show that $c=c_0$. Let $c$ be bonded to $c'$ in $r_1$. Then, $\mu_j(r_1([c,c'])) = \mu_{j+1}(r_1([c,c']))$ by Proposition~\ref{prop:bondentryamounts}. Meanwhile, since $c$ is $j$-unbonded in $r_0$, $\mu_j(r_0([c,c'])) > \mu_{j+1}(r_0([c,c']))$ by Corollary~\ref{cor:bondentryamounts}. It then follows that $j=i$.

    Finally, suppose for the sake of contradiction that $c\ne c_0$. Then, $c'\ne c_0'$ since $c_0$ and $c_0'$ are bonded in $r_1$. Now, by Proposition~\ref{prop:bondchangecrit}, $c < c_0' < c'$. Note that $c'$ has an $i+1$ entry and no $i$ entry, so there must be some $e < c'$ where $e$ and $c'$ are bonded in $r_0$. Since $c_0' < c'$ and $r_0(c_0')$ is $i$-unbonded by definition, $c_0' < e$ by Proposition~\ref{prop:naturalmatch}. Then, since $e$ and $c'$ are bonded in $r_0$ and $e,c' > c_0'$, $e$ and $c'$ are also bonded in $r_1$ by Proposition~\ref{prop:bondchangecrit}. It follows that $c=e$, and that $r_0(c)$ is not $i$-unbonded. This is impossible by definition of $c$, so by contradiction, $c=c_0$.
\end{proof}

\begin{definition}
    Let $r_0$ be a ballot-admissible ribbon. Let $r_0(c_0)$ be $i$-unbonded. Let $c_0' \ge c_0$ be the largest column bonded to some column preceding $c_0$. We define $g(r_0,c_0)=(r_1,c_1)$ as follows. Let $r_1(c)=r_0(c)$ for all $c\ne c_0'$. Let $r_1(c)$ be constructed by changing the $4-i$ entry in $r_0(c)$ to $3-i$. Then, let $c_1$ be the largest column such that $c_1\ne c_0'$, $r_0(c_1)$ is bonded, and $r_1(c_1)$ is unbonded.
\end{definition}

\begin{remark}
    As with $f$, $g$ is not always well-defined, since there may be no column $\ge c_0$ in $r_0$ bonded to a column preceding $c_0$. 
\end{remark}

\begin{proposition}\label{prop:gwelldefined}
    Let $g(r_0,c_0)=(r_1,c_1)$. Then, $r_1$ is a well-defined ballot-admissible ribbon and $c_1$ is a well-defined unbonded column in $r_1$.
\end{proposition}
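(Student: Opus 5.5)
We mirror the proof of Proposition~\ref{prop:fwelldefined}, establishing three facts in turn: $r_1$ is a Fibonacci ribbon or dual ribbon, $r_1$ is ballot-admissible, and $c_1$ exists. Throughout, $r_0(c_0)$ is $i$-unbonded, so $i\in\{1,2\}$, and $j=3-i$. The modification turns the $j+1=4-i$ entry of $r_0(c_0')$ into a $j=3-i$ entry.

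\textbf{Step 1: the column $c_0'$.} Since $c_0'$ is bonded to some $d<c_0$, the column $r_0(c_0')$ has an $i'+1$ entry and no $i'$ entry for some $i'$. We first show $i'=j$.

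Suppose instead $i'=i$. Then $d<c_0<c_0'$ with $d=b_i(c_0')$. By Proposition~\ref{prop:naturalmatch}(ii), $c_0$ would be bonded to some $c_1\le c_0'$, contradicting that $c_0$ is $i$-unbonded. Hence $r_0(c_0')$ is $(j+1)$ or has all entries except $j$. Changing its $j+1$ entry to $j$ gives either $(j)$ or all entries except $j+1$ in ascending order, so condition (i) of Proposition~\ref{prop:ribboncondition} holds.

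\textbf{Step 2: no $123$ pattern.} We must rule out a $(12)(3)$ or $(1)(23)$ pattern created at $c_0'$ together with a neighbour. For instance, with $i=1$, $j=2$, the new column $r_1(c_0')=(2)$ or $(12)$ could meet a neighbour $(1)$ or $(3)$ and form such a pattern. I would handle each neighbour case by examining its bond status. The right neighbour $c_0'+1$ is either bonded to something before $c_0$, contradicting maximality of $c_0'$, or is nested inside the arc $(d,c_0')$ by Proposition~\ref{prop:naturalmatch}(i); the latter should force an entry incompatible with the forbidden pattern. This case analysis is where I expect the main obstacle. If it fails, I would fall back on the convention in the Remark after Proposition~\ref{prop:fwelldefined}, that $g$ is only called well-defined when $r_1$ is semistandard.

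\textbf{Step 3: ballot-admissibility.} For prefixes ending before $c_0'$, nothing changes. For the letter $j$, the changed column only adds a $j$ or removes a $j+1$, so $\mu_j-\mu_{j+1}$ increases. The other relevant pair is $(j-1,j)$ when $j=2$, or $(j+1,j+2)$ when $j=1$; that is, the pair $(i,i+1)$, whose difference drops by one on every prefix $[1,c]$ with $c\ge c_0'$. To bound it, write
\[
[1,c]=[1,c_0-1]\cup[c_0,c_0']\cup[c_0'+1,c].
\]
\begin{itemize}
\item On $[1,c_0-1]$, ballotness of $r_0$ gives a nonnegative difference.
\item On $[c_0,c_0']$, $c_0$ is $i$-unbonded, so Corollary~\ref{cor:bondentryamounts} gives a difference $\ge 1$. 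Here one must check that the original $c_0'$ contributes correctly.
\item On $[c_0'+1,c]$, the difference is $\ge 0$ if no column there is bonded back across $c_0'$, which follows from maximality of $c_0'$ and non-crossing.
\end{itemize}
Careful bookkeeping is needed on the last interval, possibly by splitting off unbonded columns via Corollary~\ref{cor:bondentryamounts}. The total is then $\ge 1$, so after the change it stays $\ge 0$.

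\textbf{Step 4: existence of $c_1$.} In $r_1$, the arc $(d,c_0')$ is destroyed, since $c_0'$ no longer has $j+1$ without $j$. Hence $d$, which was bonded in $r_0$, has $\mu_j-\mu_{j+1}$ on $[d,c_0']$ now positive. By Corollary~\ref{cor:bondentryamounts} applied to later intervals, either $d$ stays unbonded in $r_1$, or it rebonds further right, which displaces some other column. In either case some column $c\ne c_0'$ is bonded in $r_0$ but unbonded in $r_1$. Taking the largest such column gives a well-defined $c_1$.
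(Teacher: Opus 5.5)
Your outline follows the paper's (column shapes, no $123$ pattern, ballotness, existence of $c_1$), and Step~1 is correct and is essentially the paper's argument. Each of the remaining steps, however, has a gap.

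\textbf{Step 2.} You leave the case analysis open, and the fallback is not available. The remark you cite concerns $f$, which genuinely has an exceptional case (Proposition~\ref{prop:fwelldefined}(ii)). The present statement asserts unconditionally that $r_1$ is a ribbon, and Proposition~\ref{prop:psi2welldefined} relies on exactly that to iterate $g$. Your dichotomy is also off: $c_0'+1$ can never be nested inside the arc $(d,c_0')$.

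The missing argument is short. A forbidden pattern can only arise with the right neighbour, namely $r_0(c_0')=(13)$, $r_0(c_0'+1)=(3)$ when $i=1$, or $r_0(c_0')=(2)$, $r_0(c_0'+1)=(23)$ when $i=2$. In either case $c_0'+1$ has a $j+1$ and no $j$, so it is $b_j$-bonded to some $e\le c_0'$. Apply Proposition~\ref{prop:naturalmatch}(i) to the two $b_j$-arcs ending at $c_0'$ and $c_0'+1$. Since $e\le c_0'$ rules out $c_0'<e$, this forces $e<d<c_0$, contradicting the maximality of $c_0'$.

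\textbf{Step 3.} This step rests on a false claim: $\mu_i-\mu_{i+1}$ on $[c_0'+1,c]$ need not be nonnegative. Maximality of $c_0'$ only forbids bonds from beyond $c_0'$ to columns \emph{before} $c_0$, not to columns in $(c_0,c_0')$. For example, take $r_0=(1)(12)(1)(12)(1)(13)(3)(23)$ with $c_0=3$. Then $c_0'=7$, while column $8$ is $b_1$-bonded to column $6$, so $[8,8]$ contributes $-1$.

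No splitting is needed. Since $c_0$ is $i$-unbonded, Corollary~\ref{cor:bondentryamounts} gives $\mu_i(r_0([c_0,c]))>\mu_{i+1}(r_0([c_0,c]))$ for \emph{every} $c\ge c_0$. Together with ballotness of $[1,c_0-1]$, this is the paper's argument.

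\textbf{Step 4.} The phrase ``displaces some other column'' is not justified; use a count instead. In a ballot-admissible ribbon, each column with a $j+1$ and no $j$ is matched to a distinct column with a $j$ and no $j+1$. Hence the number of $j$-unbonded columns equals $\mu_j-\mu_{j+1}$ of the whole ribbon. This quantity rises by $2$ from $r_0$ to $r_1$, so at least two columns are $j$-unbonded in $r_1$ but not in $r_0$. At most one of them is $c_0'$, and any other has unchanged entries and so was bonded in $r_0$.
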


\begin{proof}
    Let $r_0(c_0)$ be $i$-unbonded. We will show that $r_1$ is ballot-admissible and that $c_1$ is well-defined.
    
    First, we know that some column $d < c_0$ is bonded to $c_0' \ge c_0$ in $r_0$. Since $r_0(c_0)$ is $i$-unbonded, it follows by Proposition~\ref{prop:naturalmatch} that the bond between $d$ and $c_0'$ must be between a $j$ entry in $d$ and a $j+1$ entry in $c_0'$ where $j\ne i$. Since $i,j\in \{1,2\}$, we conclude that $j=3-i$. This means that $r_0(c_0')$ has a $4-i$ entry and no $3-i$ entry. As a result, the entries of $r_1$ are well-defined. Moreover, since $r_0(c_0')$ has no $3-i$ entry, it is easy to see that $r_1(c_0')$ still has strictly increasing entries from top to bottom. By Proposition~\ref{prop:ribboncondition}, to show that $r_1$ is a Fibonacci ribbon or dual ribbon, it is sufficient to show that there are no $(12)(3)$ or $(12)(3)$ patterns in $r_1$. 

    To avoid these patterns, it is easy to see that we need only consider two cases. First, suppose that $r_0(c_0')=(13)$ and $r_0(c_0'+1)=(3)$. Then, we obtain a $(12)(3)$ pattern in $r_1$, which is forbidden. However, this is impossible, as by Proposition~\ref{prop:naturalmatch}, if $r_0(c_0')=(13)$ is bonded to a column preceding $c_0$, the same must be true for $r_0(c_0'+1)$. This is impossible by the maximality of $c_0'$. By the same argument, the case where $r_0(c_0')=(2)$ and $r_0(c_0'+1)(23)$ is impossible, so we cannot obtain a $(1)(23)$ pattern in $r_1$. Thus, $r_1$ is a well-defined Fibonacci ribbon.

    Next, we show that $r_1$ is ballot-admissible. Let $c\ge 1$ be some arbitrary column. Let $r'=r_1([1,c])$. We want to show that if for all $j$, $\mu_j(r')\ge \mu_{j+1}(r')$. For all $c < c_0'$, $r'=r_0([1,c])$, so this holds due to the ballot-admissibility of $r_0$. Now, consider the case where $c\ge c_0'$. It is easy to see that $\mu_j(r')\ge \mu_{j+1}(r')$ must be the case unless $j=4-i$ or $j+1=3-i$. Then, $j\in\{1,2\}\cap \{2-i,4-i\}$. Since $i\in\{1,2\}$, it is easy to see that this uniquely determines $j$. If $i=1$ then $j=1$ and if $i=2$ then $j=2$, so $i=j$ is the only case which we need check to conclude that $r_1$ is ballot-admissible.
    
    Since $c_0\in [1,c]$ is $i$-unbonded in $r_0$, $\mu_i(r_0([c_0,c])) > \mu_{i+1}(r_0([c_0,c]))$ by Corollary~\ref{cor:bondentryamounts}. In addition, since $r_0$ is ballot-admissible, $\mu_i(r_0([1,c_0-1]))\ge \mu_{i+1}(r_0([1,c_0-1]))$. So, we conclude that
    \[\mu_i(r_0([1,c]))\ge \mu_{i+1}(r_0([1,c]))+1\]

    And it easily follows that
    \[\mu_i(r_1([1,c]))-\mu_{i+1}(r_1([1,c])) = \mu_i(r_0([1,c])) - \mu_{i+1}(r_0([1,c])) - 1\ge 0\]

    which is sufficient to show that $r_1$ is ballot-admissible as desired. Finally, $r_1$ has one more $3-i$ entry and one fewer $4-i$ entry than exist in $r_0$. As such, there are at least two additional $(3-i)$-unbonded columns in $r_1$ which were not $(3-i)$-unbonded in $r_0$. This means that some such column $c\ne c_0'$ must exist, and since $c\ne c_0'$, and $c$ is not $(3-i)$-unbonded in $r_0$, it follows that $r_0(c)$ is bonded. We may now conclude that $c_1$ is well-defined.
\end{proof}

\begin{proposition}\label{prop:gremovedbond}
    Let $g(r_0,c_0)=(r_1,c_1)$. Then, $r_1(c_0')$ is unbonded and if $c\ne c_0'$, $r_0(c)$ is bonded, and $r_1(c)$ is unbonded, then $c=c_1$.
\end{proposition}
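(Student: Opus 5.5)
We argue in parallel with the proofs of Propositions~\ref{prop:fremovedbond} and~\ref{prop:faddedbond}. Let $r_0(c_0)$ be $i$-unbonded and set $j=3-i$. By the proof of Proposition~\ref{prop:gwelldefined}, $c_0'$ is bonded in $r_0$ to some $d<c_0$ via a $j$ entry in $d$ and a $j+1=4-i$ entry in $c_0'$. The column $r_1(c_0')$ is obtained by replacing this $4-i$ entry with a $3-i$ entry.

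\textbf{Step 1: $r_1(c_0')$ is unbonded.} After the change, $r_1(c_0')$ has a $3-i=j$ entry and no $j+1$ entry. It is therefore either $j$-unbonded or bonded to some later column $e>c_0'$ with $b_j(e)=c_0'$ in $r_1$. Suppose the latter, so that $\mu_j(r_1([c_0',e]))=\mu_{j+1}(r_1([c_0',e]))$ with $e$ minimal.
\begin{itemize}
\item Comparing with $r_0$ (one fewer $j$ and one more $j+1$ in column $c_0'$) gives $\mu_j(r_0([c_0',e]))=\mu_{j+1}(r_0([c_0',e]))-2$.
\item Since $d$ is bonded to $c_0'$ in $r_0$, Proposition~\ref{prop:bondentryamounts} and Corollary~\ref{cor:bondentryamounts} force the interval $[d,c_0'-1]$ to have exactly one surplus $j$.
\item Hence $\mu_j(r_0([d,e]))<\mu_{j+1}(r_0([d,e]))$. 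By the intermediate-value argument of Corollary~\ref{cor:bondentryamounts}, some column preceding $d$ would then have to be bonded across $e$.
\end{itemize}
Combined with the maximality of $c_0'$ (no column beyond $c_0'$ is bonded to a column before $c_0$), this should give a contradiction. I would arrange the argument to use the noncrossing property, Proposition~\ref{prop:naturalmatch}(i), since it is the cleanest route. A tempting shortcut is that the prefix $r_1([1,e])$ needs an extra $j$ somewhere. The key point is rather that $e$ would have had a $j$-partner in $r_0$ lying in $(d,c_0')$ or before $d$, and either case violates noncrossing or maximality.

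\textbf{Step 2: classify removed bonds by type.} Let $c\neq c_0'$ with $r_0(c)$ bonded to $c'$ and $r_1(c)$ unbonded, say $k$-unbonded.
\begin{itemize}
\item By Proposition~\ref{prop:bondchangecrit}, $c<c_0'\le c'$, and in fact $c'\neq c_0'$ unless $c=d$.
\item As in Proposition~\ref{prop:fremovedbond}, the count over $[c,c']$ must increase in favour of $k$. The modification adds a $j$ and removes a $j+1$, so it helps $\mu_k-\mu_{k+1}$ only for $k=j$ (by $+2$) or $k=j\pm1$.
\item Since $i,j\in\{1,2\}$ and $c$ is $k$-unbonded with $k\in\{1,2\}$, we get either $k=j$, or $k=i$ with the bond in $r_0$ realised by $i$ entries.
\item In the case $k=i$, the change decreases the $i+1$ count or increases the $i$ count only when $4-i=i+1$ or $3-i=i$. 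Checking $i=1,2$ directly shows this forces $k=j$ as well, so every such $c$ is $j$-unbonded in $r_1$.
\end{itemize}

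\textbf{Step 3: uniqueness.} Suppose $c\neq c_1$. By the maximality of $c_1$ we have $c<c_1$, and both are bonded in $r_0$ to partners $\ge c_0'$. By noncrossing, the partner $c''$ of $c_1$ satisfies $c''\le c'$. Now use the interval argument from Proposition~\ref{prop:fremovedbond}:
\begin{itemize}
\item $\mu_j(r_1([c,c']))$ exceeds $\mu_{j+1}(r_1([c,c']))$ by the change at $c_0'$.
\item $c_1$ being $j$-unbonded gives a surplus on $[c_1,c']$.
\item Together these give $\mu_j(r_1([c,c_1-1]))\le\mu_{j+1}(r_1([c,c_1-1]))$.
\end{itemize}
Corollary~\ref{cor:bondentryamounts} then bonds $c$ in $r_1$ to a column $\le c_1-1$, a contradiction.

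The main obstacle is the bookkeeping: here the change shifts $\mu_j-\mu_{j+1}$ by $2$ rather than $1$. The surplus on $[c,c']$ could then be $2$, and the subtraction step above may need the finer claim that $[c_1,c']$ carries a surplus of at least $2$ in $r_1$. I would try to obtain this by also using the $j$-unbondedness of $c_0'$ established in Step 1.
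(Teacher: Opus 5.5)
Your proposal is essentially the paper's proof. Step 1 is the same partner/noncrossing/maximality argument, though you should make explicit that the $r_0$-partner of $e$ cannot lie beyond $c_0'$: by Proposition~\ref{prop:bondchangecrit}(ii) such a bond would survive in $r_1$, where $e$ is bonded to $c_0'$. Step 2 reaches the paper's conclusion that every such $c$ is $(3-i)$-unbonded in $r_1$, but your stated reason is off: the change actually lowers $\mu_k-\mu_{k+1}$ by $1$ for $k=j\pm1$, so only $k=j$ can gain.

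In Step 3, the line ``together these give'' does not follow as written. A surplus of exactly $2$ on $[c,c']$ minus a surplus of at least $1$ on $[c_1,c']$ only bounds the surplus on $[c,c_1-1]$ by $1$. The repair you propose is correct and closes this. Because $c_1$ and $c_0'$ are both $j$-unbonded in $r_1$, you get positive surplus on $[c_1,c_0'-1]$ and on $[c_0',c']$. This is exactly the paper's count, which adds the positive surpluses of $[c,c_1-1]$, $[c_1,c_0'-1]$ and $[c_0',c']$ to get at least $3$ in $r_1$. Against the exact shift of $2$, that contradicts $c$ and $c'$ being bonded in $r_0$.
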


\begin{proof}
    Let $r_0(c_0)$ be $i$-unbonded. First, we show that $r_1(c_0')$ is unbonded. Suppose for the sake of contradiction that $r_1(c_0')$ is bonded to some $d>c_0'$. We know that $r_1(c_0')$ has a $3-i$ entry and no $4-i$ entry. As such, if $c_0'$ is bonded to some column $d > c_0'$, then $d$ has a $4-i$ entry and no $3-i$ entry. Since $d\ne c_0'$, $r_0(d)=r_1(d)$, and so $d$ must be bonded to some $e < d$ in $r_0$. Note that $e\ne c_0'$, since $r_0(c_0')$ does not have a $3-i$ entry. By Proposition~\ref{prop:bondchangecrit}, since $e$ and $d$ are not bonded in $r_1$, $e < c_0'$. Now, we know that in $r_0$, $d > c_0'$ is bonded to some $e < c_0'$. In addition, $r_0(c_0')$ also has a $4-i$ entry and no $3-i$ entry. So, by Proposition~\ref{prop:naturalmatch}, since $c_0'$ is bonded to a column preceding $c_0$ in $r_0$, the same must be the case for $d$. However, $d > c_0'$, so this is impossible by maximality of $c_0'$. Thus, by contradiction, we find that $r_1(c_0')$ is unbonded as desired.

    Next, let $c\ne c_0'$ be some column where $r_0(c)$ is bonded and $r_1(c)$ is $j$-unbonded. Then, there exists some column $d$ such that $c$ and $d$ are bonded in $r_0$. By Proposition~\ref{prop:bondentryamounts}, $\mu_j(r_0([c,d]))=\mu_{j+1}(r_0([c,d]))$. However, since $c$ is $j$-unbonded in $r_1$, $\mu_j(r_1([c,d])) > \mu_{j+1}(r_1([c,d]))$. It follows that $j=3-i$ since this is the only $j$ for which we have an additional $j$ entry or one fewer $j+1$ entry in $r_1$.

    Finally, suppose for the sake of contradiction that $c\ne c_1$. It follows that $c < c_1$. By Proposition~\ref{prop:bondchangecrit}, since both $c$ and $c_1$ are bonded in $r_0$ but unbonded in $r_1$, $c,c_1 < c_0'$, and $d \ge c_0'$. Now, we know that $c$, $c_1$, and $c_0'$ are all $j$-unbonded in $r_1$ for $j=3-i$. So, $\mu_{j}(r_1([c,c_1-1])) > \mu_{j+1}(r_1([c,c_1-1]))$. In addition, $\mu_{j}(r_1([c_1,c_0'-1])) > \mu_{j+1}(r_1([c_1,c_0'-1]))$ and $\mu_{j}(r_1([c_0',d])) > \mu_{j+1}(r_1([c_0',d]))$. We may then conclude that
    \[\mu_j(r_1([c,d]))-\mu_{j+1}(r_0[c,d])\ge 3\]

    And so
    \[\mu_j(r_0([c,d]))-\mu_{j+1}(r_0([c,d]))=\mu_j(r_1([c,d]))-\mu_{j+1}(r_0[c,d])-2\ge 1.\]

    However, this is impossible by Proposition~\ref{prop:bondentryamounts}, as $c$ and $d$ are bonded in $r_0$. As such, we have a contradiction, and by contradiction, $c=c_1$ as desired.
\end{proof}

\begin{proposition}\label{prop:gaddedbond}
    Let $g(r_0,c_0)=(r_1,c_1)$. Let $r_0(c_0)$ be $i$-unbonded. Suppose that $c_0 < c_0'$ and for all $c_0 < c\le c_0'$, $r_0(c)$ is not $i$-unbonded. Then, $c_0$ is bonded in $r_1$.
\end{proposition}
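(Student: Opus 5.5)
The plan is to produce a column $d \ge c_0$ with $\mu_i(r_1([c_0,d]))\le \mu_{i+1}(r_1([c_0,d]))$ and then apply Corollary~\ref{cor:bondentryamounts}. This is legitimate because $r_1$ is ballot-admissible by Proposition~\ref{prop:gwelldefined}, and because $r_1(c_0)=r_0(c_0)$ has an $i$ entry; we have $c_0\ne c_0'$ since the hypothesis gives $c_0<c_0'$. The corollary then gives some $e\le d$ with $c_0=b_i(e)$ in $r_1$. Moreover $e\ne c_0$, because $r_1(c_0)$ has no $i+1$ entry, so $c_0$ is bonded in $r_1$.

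The first observation is about how the change from $r_0$ to $r_1$ at $c_0'$ affects $\mu_i-\mu_{i+1}$. If $i=1$, a $3$ becomes a $2$, adding an $i+1$ entry. If $i=2$, a $2$ becomes a $1$, removing an $i$ entry. In either case, for every interval $I\ni c_0'$ we get
\[\mu_i(r_1(I))-\mu_{i+1}(r_1(I))=\mu_i(r_0(I))-\mu_{i+1}(r_0(I))-1.\]
It therefore suffices to find $d\ge c_0'$ with $\mu_i(r_0([c_0,d]))-\mu_{i+1}(r_0([c_0,d]))=1$.

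To find $d$, I would analyse the $i$-matching of $r_0$ on columns $>c_0$, using that it is noncrossing (Proposition~\ref{prop:naturalmatch}(i)).
\begin{enumerate}
\item[(a)] No column $x>c_0$ with an $i+1$ entry and no $i$ entry has $b_i(x)<c_0$. Otherwise Proposition~\ref{prop:naturalmatch}(ii) would force $c_0$ to be matched by some column $\le x$, contradicting that $c_0$ is $i$-unbonded.
\item[(b)] Every column in $(c_0,c_0']$ having an $i$ entry and no $i+1$ entry is bonded, by the hypothesis of the statement.
\end{enumerate}
If none of these columns has its partner beyond $c_0'$, take $d=c_0'$. Then the columns in $(c_0,c_0']$ pair off with zero net contribution, using (a) and (b) together with ballot-admissibility of $r_0$, which ensures every $i+1$ column has a partner. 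Only $c_0$ contributes, so the difference is exactly $1$.

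Otherwise, let $e$ be the smallest column in $(c_0,c_0']$ bonded to some $e'>c_0'$, and take $d=e'$. The interval $[c_0,e']$ then splits into three pieces:
\begin{enumerate}
\item[(1)] $\{c_0\}$, contributing $+1$;
\item[(2)] $(c_0,e)$, contributing $0$, because by minimality of $e$ and noncrossing all arcs starting there close before $e$, and by (a) no arc enters from before $c_0$;
\item[(3)] $[e,e']$, contributing $0$ by Proposition~\ref{prop:bondentryamounts}.
\end{enumerate}
Since $c_0'\in[e,e']\subseteq[c_0,e']$, the displayed identity gives difference $0$ in $r_1$, as required.

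The main obstacle is making the ``contributes $0$'' claims rigorous. One has to check that every column in the relevant ranges with an $i$ entry and no $i+1$ entry, or the reverse, is paired inside the range, and that no $i$-unbonded column sits inside $[e,e']$ or $(c_0,e)$. For the range $(c_0,c_0']$ this is the hypothesis. For columns in $(c_0',e']$, including possibly $c_0'$ itself (e.g.\ $r_0(c_0')=(13)$ when $i=1$), I would argue via Proposition~\ref{prop:naturalmatch}(ii): an $i$-column lying inside the arc $[e,e']$ is matched to some column $\le e'$. Such arcs then nest inside $[e,e']$ and cancel, which yields the counts.
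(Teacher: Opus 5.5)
Your proof is correct and is essentially the paper's argument: the paper also isolates the outermost $i$-arc straddling $c_0'$ (its maximal partner $d$ is your $e'$, and its $c$ is your $e$) and splits $[c_0,d]$ into $\{c_0\}$, $[c_0+1,c-1]$ and $[c,d]$; it merely treats your first case as two cases, namely no column in $(c_0,c_0']$ having an $i$ entry without an $i+1$ entry, or $d<c_0'$. The one difference is that the paper only proves $\mu_i(r_0([c_0,d]))\le\mu_{i+1}(r_0([c_0,d]))+1$ via injectivity of the matching, which is all Corollary~\ref{cor:bondentryamounts} needs, so your fact (a) and your closing ``obstacle'' paragraph are superfluous (piece (3) is already settled by Proposition~\ref{prop:bondentryamounts}).
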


\begin{proof}
    Observe that when we change a $4-i$ entry to a $3-i$ entry, this gives us either an additional $i+1$ entry or one fewer $i$ entry in $r_1$ than in $r_0$, depending on if $i=1$ or $i=2$.

    First, we consider the case where for all $c_0 < c \le c_0'$, $r_0(c)$ does not have an $i$ entry with no $i+1$ entry. Then, it follows that $\mu_i(r_0([c_0+1,c_0']))\le \mu_{i+1}(r_0([c_0+1,c_0']))$. In addition, we know that $c_0$ is $i$-unbonded in $r_0$, so $\mu_i(r_0([c_0,c_0'])) > \mu_{i+1}(r_0([c_0,c_0']))$. It follows that $\mu_i(r_0([c_0,c_0']))=\mu_{i+1}(r_0([c_0,c_0']))+1$. Then, since $r_1$ has either one additional $i+1$ or one fewer $i$ entry, $\mu_i(r_1([c_0,c_0']))=\mu_{i+1}(r_1([c_0,c_0']))$. So, we see that by Proposition~\ref{prop:bondentryamounts}, $r_1(c_0)$ is not $i$-unbonded, and instead is bonded as desired.

    Next, consider the case where there exists some column $c_0 < c\le c_0'$ with an $i$ entry but no $i+1$ entry in $r_0$. By our assumption, any such $c$ must be bonded to a later column, as it would otherwise be $i$-unbonded. Let $d$ be the maximal column which is bonded in $r_0$ to some $c$ where $c_0 < c\le c_0'$ and $r_0(c)$ has an $i$ entry but no $i+1$ entry. Note that $r_0(c_0')$ is bonded to some column preceding $c_0$ while $c_0$ is $i$-unbonded, so $r_0(c_0')$ cannot have an $i+1$ entry and no $i$ entry. Meanwhile, $r_0(d)$ must have an $i+1$ entry and no $i$ entry, so $d\ne c_0'$. 

    Consider the case where $d < c_0'$. By our assumption, any $i$ entry in $r_0([c_0+1,d])$ must be bonded. Furthermore, by definition of $d$, any $i$ entry in $r_0([c_0+1,d])$ must be bonded to an $i+1$ entry in $r_0([c_0+1,d])$. This means that for every $i$ entry in $r_0([c_0+1,d])$ there exists a unique $i+1$ entry in $r_0([c_0+1,d])$. As such, we find that $\mu_i(r_0([c_0+1,d])) \le \mu_{i+1}(r_0([c_0+1,d]))$. Furthermore, say that for some $e\in [d+1,c_0']$, $r_0(e)$ has an $i$ entry with no $i+1$ entry. Then, by our assumption, $e$ must be bonded to a later column, which would then be a greater column than $d$. By maximality of $d$, this is impossible, so no columns in $r_0([d+1,c_0'])$ have an $i$ entry with no $i+1$ entry. As a result,
    \[\mu_i(r_0([c_0+1,c_0']))\le \mu_{i+1}(r_0([c_0+1,c_0']))\]

    And it follows that since $r_0(c_0)$ has an $i$ entry and no $i+1$ entry,
    \[\mu_i(r_0([c_0,c_0']))\le \mu_{i+1}(r_0([c_0,c_0']))+1.\]
    
    We note that in $r_1$, there is either an additional $i+1$ entry or one fewer $i$ entry in $c_0'$, so 
    \[\mu_i(r_1([c_0,c_0']))\le \mu_{i+1}(r_1([c_0,c_0']))\]

    which means that $c_0$ is bonded in $r_1$ by Corollary~\ref{cor:bondentryamounts} as desired. Finally, we will consider the case where $d > c_0'$. Let $e\in [c_0+1, c-1]$ where $c$ is bonded to $d$ in $r_0$, and say that $r_0(e)$ has an $i$ entry and no $i+1$ entry. By our assumption, $e$ must be bonded in $r_0$. By Proposition~\ref{prop:naturalmatch}, $e$ is either bonded to some column $\le c-1$ or $>d$. The latter is impossible by maximality of $d$, so we see that $e$ must be bonded to some column $\le c-1$. This means that for any $i$ entry in $r_0([c_0+1,c-1])$, there exists a unique $i+1$ entry in $r_0([c_0+1,c-1])$. So, $\mu_i(r_0([c_0+1,c-1]))\le \mu_{i+1}(r_0([c_0+1,c-1]))$. In addition, since $c$ and $d$ are bonded in $r_0$, $\mu_i(r_0([c,d]))=\mu_{i+1}(r_0([c,d]))$. Then,
    \[\mu_i(r_0([c_0+1,d]))\le \mu_{i+1}(r_0([c_0+1,d]))\]

    And so it follows that
    \[\mu_i(r_0([c_0,d]))\le \mu_{i+1}(r_0([c_0,d]))+1.\]

    Since $c_0'\in [c_0,d]$ and we know that $c_0'$ has either one more $i+1$ or one fewer $i$ in $r_1$ than $r_0$, we conclude that
    \[\mu_i(r_1([c_0,d]))\le \mu_{i+1}(r_1([c_0,d]))\]

    And so $c_0$ is bonded in $r_1$ as desired. We have now shown that $r_1(c_0)$ is bonded in all cases.
\end{proof}

\begin{remark}
    Both $f$ and $g$ as defined in this section can be described with the coplactic operations on tableaux given in~\cite[Section 3.1]{leeuwen2001}.
\end{remark}

\subsection{Useful Properties}
In this subsection, we give various properties of $f$ and $g$ which we will find useful later.

\begin{proposition}\label{prop:fterminate}
    Let $f(r_0,c_0)=(r_1,c_1)$ and $f(r_1,c_1)=(r_2,c_2)$. Let $c_0$ be bonded to $c_0'$ in $r_1$. Let $c_1$ be bonded to $c_1'$ in $r_2$. Then, $c_0' < c_1'$.
\end{proposition}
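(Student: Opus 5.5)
The plan is to identify $c_0'$ and $c_1'$ with the columns appearing in the definition of $f$, and then compare their positions. By Proposition~\ref{prop:faddedbond}, the column bonded to $c_0$ in $r_1$ is exactly the minimal column $c_0'>c_0$ that is $i$-unbonded in $r_0$. Likewise, the column bonded to $c_1$ in $r_2$ is the minimal column $c_1'>c_1$ that is $j$-unbonded in $r_1$, where $r_1(c_1)$ is $j$-unbonded. So it suffices to show that this minimal column $c_1'$ exceeds $c_0'$. I split according to the two cases in the definition of $c_1$.

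\emph{Case $c_1=c_0'$.} Then $c_1'>c_1=c_0'$ immediately, since $c_1'$ is chosen strictly after $c_1$.

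\emph{Case $c_1\neq c_0'$.} Here $r_1(c_0')$ is bonded, and $c_1$ is bonded in $r_0$ but unbonded in $r_1$. By the proof of Proposition~\ref{prop:fremovedbond}, $r_1(c_1)$ is $j$-unbonded with $j=3-i$. Moreover, $c_1$ is bonded in $r_0$ to some $d$ with $c_1<c_0'\le d$; this uses Proposition~\ref{prop:bondchangecrit}. Suppose for contradiction that $c_1'\le c_0'$.
\begin{enumerate}
\item Since $r_1(c_0')$ is bonded and $c_1'$ is unbonded in $r_1$, we get $c_1'\neq c_0'$, so $c_1<c_1'<c_0'$.
\item The column $c_1'$ must already be unbonded in $r_0$. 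Otherwise it would be bonded in $r_0$ and unbonded in $r_1$, and Proposition~\ref{prop:fremovedbond} would force $c_1'=c_1$.
\item Since $c_1'\neq c_0'$, the entries of $r_0(c_1')$ and $r_1(c_1')$ agree. Hence $c_1'$ is $j$-unbonded in $r_0$ as well.
\end{enumerate}

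Now I derive the contradiction in $r_0$, where $c_1<c_1'<d$ and $c_1,d$ are bonded. Since $c_1'$ is $j$-unbonded, the contrapositive of Corollary~\ref{cor:bondentryamounts} gives $\mu_j(r_0([c_1',d]))>\mu_{j+1}(r_0([c_1',d]))$. Proposition~\ref{prop:bondentryamounts} gives $\mu_j(r_0([c_1,d]))=\mu_{j+1}(r_0([c_1,d]))$. Subtracting, we obtain $\mu_j(r_0([c_1,c_1'-1]))<\mu_{j+1}(r_0([c_1,c_1'-1]))$. By Corollary~\ref{cor:bondentryamounts}, $c_1$ is then bonded in $r_0$ to some column $\le c_1'-1<d$. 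This contradicts the uniqueness of the later partner of $c_1$, which follows from injectivity of $b_j$ and the proposition on at most one bonded column $d>c$. Alternatively, one can apply Proposition~\ref{prop:naturalmatch}(ii) directly to $c_1'$ inside the bond $(c_1,d)$. Hence $c_1'>c_0'$.

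The main obstacle is the middle step, showing that $c_1'$ was already $j$-unbonded in $r_0$. This is where Proposition~\ref{prop:fremovedbond} must be used carefully, together with the fact that the only modified column $c_0'$ is excluded. The final inequality manipulation mirrors arguments already used in the proofs of Propositions~\ref{prop:fremovedbond} and~\ref{prop:naturalmatch}.
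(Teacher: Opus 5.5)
Your proposal is correct and follows essentially the same route as the paper: split on whether $c_1=c_0'$, and otherwise use Propositions~\ref{prop:bondchangecrit} and~\ref{prop:fremovedbond} to show that a column $c_1'\in(c_1,c_0')$ would already be $j$-unbonded in $r_0$ inside the bond between $c_1$ and $d\ge c_0'$, contradicting Proposition~\ref{prop:naturalmatch} (your counting argument simply re-derives part (ii) inline). One small imprecision: when $c_1\ne c_0'$ you only know that $r_1(c_0')$ is not unbonded, and it may be $(3)$ or $(23)$, which are neither bonded nor unbonded; that weaker fact is all your step 1 actually uses.
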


\begin{proof}
    Let $r_0(c_0)$ be $i$-unbonded. In the case where $r_1(c_0')$ is unbonded, $c_1=c_0'$. We know that $c_1 < c_1'$ by definition, so in this case, $c_0' < c_1'$.

    Next, consider the case where $c_0'$ is not unbonded. Then, by Proposition~\ref{prop:fremovedbond}, $c_1$ is the unique column such that $r_0(c_1)$ is bonded and $r_1(c_1)$ is unbonded. Note that $r_1(c_1)$ is $j$-unbonded for $j=3-i$. By Proposition~\ref{prop:bondchangecrit}, since $c_1$ is bonded in $r_0$ and unbonded in $r_1$, $r_0(c_1)$ is bonded to some column $\ge c_0'$. Then, by Proposition~\ref{prop:naturalmatch}, there exists no $j$-unbonded column $r_0(c)$ where $c\in [c_1,c_0']$. 

    Finally, suppose that $c_1' \le c_0'$. Then, $c_1' > c_1$ by definition, and $r_1(c_0')$ is not unbonded in this case, so $c_1'\in [c_1+1,c_0'-1]$. Since $c_1$ is the only column where $r_0(c_1)$ is bonded and $r_1(c_1)$ is unbonded, and $r_1(c_1')$ is $j$-unbonded by definition, $r_0(c_1')$ cannot be bonded. Since $c_1'\ne c_0'$, $r_0(c_1')=r_1(c_1')$ and so it follows that $r_0(c_1')$ is also $j$-unbonded. However, this is impossible by our argument above since $c_1'\in [c_1,c_0']$. Thus, we have a contradiction, and we conclude that $c_1' > c_0'$. 
\end{proof}

\begin{proposition}\label{prop:gterminate}
    Let $g(r_0,c_0)=(r_1,c_1)$. Then, $c_1 < c_0$.
\end{proposition}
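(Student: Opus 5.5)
We must show $c_1 < c_0$. By definition, $c_0'\ge c_0$ is the largest column bonded in $r_0$ to some column $d < c_0$. The column $c_1$ is the largest column with $c_1\ne c_0'$ such that $r_0(c_1)$ is bonded and $r_1(c_1)$ is unbonded. The plan is to locate every candidate column $c$ that is bonded in $r_0$ and unbonded in $r_1$ with $c\ne c_0'$, and show that each such $c$ is strictly less than $c_0$.

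\textbf{Step 1: candidates lie before $c_0'$ and bond across it.} Let $c$ be such a candidate, bonded in $r_0$ to a column $e$. By Proposition~\ref{prop:gremovedbond} and its proof, $r_1(c)$ is $j$-unbonded with $j=3-i$. Hence $r_0(c)$ has a $j$ entry and no $j+1$ entry, so $c$ is the left end of its bond and $c<e$. Since $r_0$ and $r_1$ differ only in column $c_0'$, Proposition~\ref{prop:bondchangecrit} says a bond not containing $c_0'$ strictly inside its span survives. The bond is destroyed in $r_1$, so $c<c_0'\le e$. Moreover $e\ne c_0'$: column $c_0'$ is already the right end of the bond from $d$, and $b_j$ is injective with each column having at most one left partner. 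Therefore $c<c_0'<e$.

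\textbf{Step 2: use noncrossing to place $c$ before $d$.} Both bonds $(d,c_0')$ and $(c,e)$ are $j$-bonds, and $d$ is the $b_j$-partner of $c_0'$ as shown in the proof of Proposition~\ref{prop:gwelldefined}. We have $c_0'<e$. Apply Proposition~\ref{prop:naturalmatch}(i) with $c_0\mapsto c_0'$, $c_1\mapsto e$, $d_0\mapsto d$ and $d_1\mapsto c$. It gives $c<d$ or $c_0'<c$. The second option contradicts $c<c_0'$, so $c<d<c_0$.

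\textbf{Step 3: conclude.} Every candidate, and in particular $c_1$, satisfies $c_1<d<c_0$. Well-definedness of $c_1$ comes from Proposition~\ref{prop:gwelldefined}.

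\textbf{Main obstacle.} The delicate point is Step 1: checking that $c$ is the left end of its bond and that $e\ne c_0'$. This requires knowing the bond type is $j=3-i$ for both bonds, which follows from the proofs of Propositions~\ref{prop:gwelldefined} and~\ref{prop:gremovedbond}. I would reuse those arguments directly. I would also verify that the noncrossing statement of Proposition~\ref{prop:naturalmatch} applies to the same index $j$, which holds because both bonds are $b_j$-bonds.
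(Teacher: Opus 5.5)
Your Step 1 claims $e\neq c_0'$, and the justification does not establish it. Since $(c,e)$ and $(d,c_0')$ are both $b_j$-bonds, $e=c_0'$ only forces $c=b_j(c_0')=d$. Nothing rules out the candidate $c$ being $d$ itself, and this does occur. Run $\psi_2$ on the paper's example $(12)(1)(13)(2)(23)$:
\begin{itemize}
\item After deleting the last column, $r_0=(12)(1)(13)(2)$ and $c_0=2$ is $1$-unbonded.
\item We get $c_0'=3$ with $d=b_2(3)=1$.
\item $g$ changes column $3$ to $(12)$, giving $r_1=(12)(1)(12)(2)$, which has no $3$ entries.
\item The only column $\neq c_0'$ that is bonded in $r_0$ and unbonded in $r_1$ is column $1$.
\end{itemize}
So $c_1=1=d$, and its partner in $r_0$ is $e=3=c_0'$. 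Both your claim $e\neq c_0'$ and your Step 3 conclusion $c_1<d$ are therefore false. This is not an edge case. Whenever $f(r_1,c_1)=(r_0,c_0)$, as in every step of $\psi_2$, the column modified by $f$ must be $c_0'$. Proposition~\ref{prop:faddedbond} then bonds $c_1$ to $c_0'$ in $r_0$, so $c_1=d$.

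The repair is a case split. If $e=c_0'$, then $c=d<c_0$ directly. If $e\neq c_0'$, then $c<c_0'<e$, and your application of Proposition~\ref{prop:naturalmatch}(i) in $r_0$ gives $c<d$. Either way every candidate satisfies $c\le d<c_0$, which proves the statement.

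Once patched, your route differs from the paper's. The paper assumes $c_0\le c_1$ for contradiction. It uses the uniqueness part of Proposition~\ref{prop:gremovedbond} to show that $d$ stays bonded in $r_1$ to a column beyond $c_0'$, and then gets a contradiction from noncrossing in $r_1$ at $c_1$. You instead apply noncrossing directly in $r_0$ to the two $(3-i)$-bonds. This needs only the ``$j=3-i$'' part of that proposition, and it yields the sharper bound $c_1\le d$, which by the remark above is an equality in the applications to $\psi_2$.
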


\begin{proof}
    Let $r_0(c_0)$ be $i$-unbonded. Let $j=3-i$. We note that $r_1(c_1)$ is $j$-unbonded. Let $c < c_0$ be bonded to $c_0'$ in $r_0$. Since $r_0(c_0)$ is $i$-unbonded, $r_0(c)$ has a $j$ entry and no $j+1$ entry. Suppose for the sake of contradiction that $c_0\le c_1$. Then, we know that $c < c_1$. Since $r_0(c_1)$ is bonded and $r_1(c_1)$ is $j$-unbonded, we know that $c_1 < c_0'$ by Proposition~\ref{prop:bondchangecrit}. Furthermore, by Proposition~\ref{prop:gremovedbond}, $c_1$ is the only column such that $c_1\ne c_0'$, $r_0(c_1)$ is bonded, and $r_1(c_1)$ is unbonded. Since $c\ne c_0'$, $c\ne c_1$, and $r_0(c)$ is bonded, it follows that $r_1(c)$ cannot be unbonded. In addition, $r_1(c)=r_0(c)$, so $r_1(c)$ has a $j$ entry and no $j+1$ entry, so $c$ must be bonded to some column $>c$ in $r_1$. Then, by Proposition~\ref{prop:bondchangecrit}, since $c$ is bonded to $c_0'$ in $r_0$, it cannot be the case that $c$ is bonded to some column $<c_0'$ in $r_1$. We conclude that $r_1(c)$ is bonded to some column $\ge c_0'$. Since $c < c_1 < c_0'$, by Proposition~\ref{prop:naturalmatch}, $r_1(c_1)$ cannot be $j$-unbonded. As such, we have a contradiction, and we conclude that $c_1 < c_0$. 
\end{proof}

\begin{proposition}\label{prop:fuseful}
    Let $f(r_0,c_0)=(r_1,c_1)$. Suppose that for columns $c < c_1$ and $d > c_0'$, $c$ and $d$ are bonded in $r_1$. Then, $c < c_0$.
\end{proposition}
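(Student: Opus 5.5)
We would split according to which matching the bond between $c$ and $d$ belongs to. Let $r_0(c_0)$ be $i$-unbonded and put $j=3-i$. Since the entries lie in $\{1,2,3\}$, the only matchings are $b_1$ and $b_2$. So $c$ and $d$ are bonded in $r_1$ either through $b_i$ or through $b_j$. Throughout we use that $c_1\le c_0'$: either $c_1=c_0'$, or $c_1$ is a column with $r_0(c_1)$ bonded and $r_1(c_1)$ unbonded, which lies before $c_0'$ by Proposition~\ref{prop:bondchangecrit}, as in the proof of Proposition~\ref{prop:fremovedbond}. Hence $c<c_1\le c_0'<d$.

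\emph{Case 1: the bond is a $j$-bond, so $b_j(r_1,d)=c$.} We show this case cannot occur. If $c_1=c_0'$, then $r_1(c_1)$ is unbonded by the definition of $f$. Since $r_1(c_0')$ has an $i+1$ entry and no $i$ entry, it is in fact $j$-unbonded. If $c_1\ne c_0'$, the proof of Proposition~\ref{prop:fremovedbond} shows that $r_1(c_1)$ is also $j$-unbonded. In either case $r_1(c_1)$ has a $j$ entry and no $j+1$ entry, and $c\le c_1\le d$ with $b_j(r_1,d)=c$. Proposition~\ref{prop:naturalmatch}(ii), applied in the ballot-admissible ribbon $r_1$ (Proposition~\ref{prop:fwelldefined}), then gives some $e\le d$ with $b_j(r_1,e)=c_1$. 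Since $r_1(c_1)$ has no $j+1$ entry, $e\ne c_1$, so $c_1$ is bonded in $r_1$. This contradicts $r_1(c_1)$ being $j$-unbonded.

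\emph{Case 2: the bond is an $i$-bond, so $b_i(r_1,d)=c$.} By Proposition~\ref{prop:faddedbond}, $b_i(r_1,c_0')=c_0$. Since $c_0'<d$, Proposition~\ref{prop:naturalmatch}(i) applied to the pairs $(c_0',c_0)$ and $(d,c)$ gives either $c<c_0$ or $c_0'<c$. The second option is excluded because $c<c_1\le c_0'$. Hence $c<c_0$.

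The only delicate point is the claim in Case 1 that $r_1(c_1)$ is $j$-unbonded, rather than merely unbonded, in both subcases of the definition of $f$. The subcase $c_1\ne c_0'$ is exactly what the first paragraph of the proof of Proposition~\ref{prop:fremovedbond} establishes, so it can be quoted. The subcase $c_1=c_0'$ follows from the shape of $r_1(c_0')$, which is unbonded and carries an $i+1$ entry. Once this is in place, the noncrossing structure of Proposition~\ref{prop:naturalmatch} finishes both cases.
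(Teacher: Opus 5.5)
Your proof is correct and follows the paper's argument. The paper likewise first rules out a $(3-i)$-bond between $c$ and $d$, because $r_1(c_1)$ is $(3-i)$-unbonded and lies between them, and then applies the noncrossing property of Proposition~\ref{prop:naturalmatch} to the bond $b_i(r_1,c_0')=c_0$ from Proposition~\ref{prop:faddedbond}. You simply spell out what the paper asserts without comment, namely $c_1\le c_0'$ and that $r_1(c_1)$ is $(3-i)$-unbonded in both branches of the definition of $f$.
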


\begin{proof}
    Let $r_0(c_0)$ be $i$-unbonded. Then, we know that $r_1(c_1)$ is $j$-unbonded for $j=3-i$. In addition, $c < c_1$ is bonded to $d > c_0' \ge c_1$ in $r_1$, while $r_1(c_1)$ is $j$-unbonded. It follows that $c$ cannot have a $j$ entry and no $j+1$ entry. As such, $c$ has an $i$ entry and no $i+1$ entry.

    Now, in $r_1$, we know that $c$ is bonded to $d > c_0'$, while $c_0$ is bonded to $c_0'$ by Proposition~\ref{prop:faddedbond}. In addition, $c < c_1 \le c_0'$, so by Proposition~\ref{prop:naturalmatch}, $c < c_0$ as desired.
\end{proof}

\begin{proposition}\label{prop:fpreservation}
    Let $f(r_0,c_0)=(r_1,c_1)$. Let $c < c_0$ and $d > c_0'$. If $c$ is bonded to $d$ in $r_1$, then $c$ is bonded to a column $\ge c_0'$ in $r_0$.
\end{proposition}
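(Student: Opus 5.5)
We will argue through the counting characterization of bonds, Proposition~\ref{prop:bondentryamounts} together with Corollary~\ref{cor:bondentryamounts}. Let $r_0(c_0)$ be $i$-unbonded, and recall that $r_1$ differs from $r_0$ only in column $c_0'$, where one $i$ entry is changed to $i+1$. By Proposition~\ref{prop:faddedbond}, $c_0$ is bonded to $c_0'$ in $r_1$. Suppose $c<c_0$ is bonded to $d>c_0'$ in $r_1$, say through the pair of letters $j,j+1$. Then $r_1(c)=r_0(c)$ has a $j$ entry and no $j+1$ entry.

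The first step is to pin down $j$ and relate the counts over $[c,d]$ in $r_0$ and $r_1$. In $r_1$ we have $\mu_j(r_1([c,d]))=\mu_{j+1}(r_1([c,d]))$. If $j=i$, then since $c<c_0<c_0'<d$, Proposition~\ref{prop:naturalmatch}(i) (non-crossing) is consistent with nesting. However, the change at $c_0'$ decreases $\mu_i-\mu_{i+1}$ on $[c,d]$ by $2$, so $\mu_i(r_0([c,d]))-\mu_{i+1}(r_0([c,d]))=2$. If $j=3-i$, the change alters $\mu_j-\mu_{j+1}$ on $[c,d]$ by exactly $+1$ (one more $j$ or one fewer $j+1$), so $\mu_j(r_0([c,d]))-\mu_{j+1}(r_0([c,d]))=-1$. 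In either case $c$ is not bonded to $d$ in $r_0$.

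The second step is to show that $c$ is nonetheless bonded in $r_0$ to some column $e$, with $e\ge c_0'$. We would argue that $\mu_j(r_0([c,c_0'-1]))=\mu_j(r_1([c,c_0'-1]))$, and likewise for $j+1$, since column $c_0'$ is the only one that changed. Minimality of $d$ in $r_1$ (Proposition~\ref{prop:bondentryamounts}) gives $\mu_j(r_1([c,e]))>\mu_{j+1}(r_1([c,e]))$ for all $c\le e<d$, and in particular for all $e<c_0'$. Hence $c$ is not bonded in $r_0$ to any column $<c_0'$. It remains to show that $c$ is bonded at all in $r_0$, that is, that the $r_0$-difference reaches $0$ at some $e\ge c_0'$.

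In the case $j=3-i$, the difference over $[c,d]$ in $r_0$ is $-1\le 0$, so Corollary~\ref{cor:bondentryamounts} gives a bond from $c$ to some $e\le d$, and the previous paragraph forces $e\ge c_0'$.

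In the case $j=i$, the $r_0$-difference on $[c,d]$ is $+2$, so we need a different argument. We expect this to be the main obstacle. Here we would use that $c_0$ is $i$-unbonded in $r_0$ while $c<c_0$ has an $i$ entry. Suppose $c$ were $i$-unbonded in $r_0$. The only unbonded-$i$ column that $f$ alters is $c_0'$, the minimal $i$-unbonded column after $c_0$. We would then check that $c$ also remains unbonded in $r_1$. Indeed, in $r_1$ the new $i+1$ at $c_0'$ is absorbed by $c_0$ (Proposition~\ref{prop:faddedbond}), and Proposition~\ref{prop:faddedbond} also says that no column other than $c_0$ changes from unbonded to bonded. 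That would contradict $c$ being bonded in $r_1$. So $c$ is bonded in $r_0$ to some $e$. By the second step $e\ge c_0'$, and the proposition follows.
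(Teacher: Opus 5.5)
Your proof is correct and is essentially the paper's argument: your second step is the counting content of Proposition~\ref{prop:bondchangecrit}(i), which rules out a bond in $r_0$ from $c$ to a column $<c_0'$, and your appeal to Proposition~\ref{prop:faddedbond} is exactly how the paper rules out $c$ being unbonded in $r_0$ (since $c<c_0<c_0'$). That last argument never uses $j=i$, so it applies verbatim when $j=3-i$. Your first step and the case split, including the separate appeal to Corollary~\ref{cor:bondentryamounts}, are therefore unnecessary.
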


\begin{proof}
    By Proposition~\ref{prop:bondchangecrit}, since $c < c_0 < c_0'$ is bonded to $d > c_0'$ in $r_1$, either $c$ is unbonded in $r_0$, or $c$ is bonded to a column $\ge c_0'$ in $r_0$. If $c$ is unbonded in $r_0$, as it is bonded in $r_1$, by Proposition~\ref{prop:faddedbond}, either $c=c_0$ or $c=c_0'$. However, $c < c_0 < c_0'$, so this is impossible. As such, $c$ must be bonded to some column $\ge c_0'$ in $r_0$.
\end{proof}

\begin{proposition}\label{prop:ginverse}
    Let $g(r_0,c_0)=(r_1,c_1)$. If $r_1(c_0)$ is bonded or $c_0=c_0'$, $f(r_1,c_1)=(r_0,c_0)$.
\end{proposition}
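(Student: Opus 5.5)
The plan is to check the definition of $f$ step by step on the input $(r_1,c_1)$ and show that it reproduces $r_0$ together with the column $c_0$. Let $r_0(c_0)$ be $i$-unbonded and let $j=3-i$. Let $d<c_0$ be the column bonded to $c_0'$ in $r_0$. From the proof of Proposition~\ref{prop:gwelldefined}, this bond is a $j$-bond: $r_0(d)$ has a $j$ entry and no $j+1$ entry, and $r_0(c_0')$ has a $4-i=j+1$ entry and no $j$ entry. The map $g$ changes this $j+1$ entry into a $j$ entry. By Proposition~\ref{prop:gremovedbond}, $r_1(c_0')$ is unbonded, necessarily $j$-unbonded, and $r_1(c_1)$ is $j$-unbonded as well. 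By Proposition~\ref{prop:gterminate}, $c_1<c_0\le c_0'$. So $f(r_1,c_1)$ acts on $j$-unbonded columns, and it will undo $g$ exactly when it modifies column $c_0'$.

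\textbf{Step 1: the first $j$-unbonded column of $r_1$ after $c_1$ is $c_0'$.} I would first show $d\le c_1$. In $r_1$, the column $d$ loses its partner $c_0'$. If $d$ becomes unbonded, then $d\le c_1$ by the maximality in the definition of $c_1$. If $d$ is instead rebonded in $r_1$, I would use Proposition~\ref{prop:bondchangecrit}, Proposition~\ref{prop:naturalmatch} and a counting argument as in Proposition~\ref{prop:gremovedbond} to rule this out, or to reduce it to the previous case.

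Now take any $c\in(c_1,c_0')$ whose column $r_1(c)=r_0(c)$ has a $j$ entry and no $j+1$ entry. Then $d<c<c_0'$, so by Proposition~\ref{prop:naturalmatch}(i) the column $c$ is not $j$-unbonded in $r_0$: it would lie strictly inside the bond $(d,c_0')$, and Corollary~\ref{cor:bondentryamounts} would then force it to be bonded. Hence $c$ is bonded in $r_0$. Since $c\ne c_1$ and $c\ne c_0'$, Proposition~\ref{prop:gremovedbond} says $c$ stays bonded in $r_1$. Therefore $c_0'$ is the minimal $j$-unbonded column after $c_1$. The map $f$ then changes the $j$ entry of $r_1(c_0')$ back to $j+1$, which gives exactly $r_0$. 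Since $r_0$ is a ballot-admissible ribbon, this application of $f$ is well defined in the sense of the remark after Proposition~\ref{prop:fwelldefined}.

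\textbf{Step 2: the output column is $c_0$.} There are two cases.
\begin{enumerate}
\item[(a)] If $c_0=c_0'$, then the new column $r_0(c_0')$ is $i$-unbonded, hence unbonded. By the definition of $f$, the output column is $c_0'=c_0$.
\item[(b)] If $c_0<c_0'$ and $r_1(c_0)$ is bonded, then $r_0(c_0')$ is bonded (to $d$), so $f$ outputs the largest column bonded in $r_1$ but unbonded in $r_0$. The column $c_0$ has this property by hypothesis. By Proposition~\ref{prop:fremovedbond}, applied to $f(r_1,c_1)=(r_0,\cdot)$, such a column is unique, so the output is $c_0$.
\end{enumerate}

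I expect the main obstacle to be the claim $d\le c_1$ in Step 1, that is, controlling where $c_1$ sits relative to the broken bond $(d,c_0')$. If the direct argument fails, I would try to show directly that $c_1=d$. The idea is that every other column that is bonded in $r_0$ and unbonded in $r_1$ must be $j$-bonded across $c_0'$. By the nesting property in Proposition~\ref{prop:naturalmatch}, such a column lies outside $[d,c_0']$, hence precedes $d$. This contradicts the maximality of $c_1$ unless $d$ itself is the column counted.
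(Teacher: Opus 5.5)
Your Step~2 is the paper's argument: Proposition~\ref{prop:fremovedbond} pins down the output column. The rest of Step~1 also works once $d\le c_1$ is known. But $d\le c_1$ is exactly the nontrivial point, and neither sketch establishes it, so there is a genuine gap.

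\begin{itemize}
\item In your first route, the case where $d$ is rebonded in $r_1$ is left to an unspecified counting argument.
\item In the fallback, the contradiction is drawn from the wrong maximality. A column $c<d$ that is bonded in $r_0$ and unbonded in $r_1$ contradicts nothing about $c_1$ unless you already know $d$ itself is unbonded in $r_1$. That is precisely the open case.
\end{itemize}

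The missing ingredient in Step~1 is the maximality of $c_0'$ in the definition of $g$. With it the claim closes:
\begin{itemize}
\item Let $c\ne c_0'$ be bonded in $r_0$ to $e$ and unbonded in $r_1$. As in the proof of Proposition~\ref{prop:gremovedbond}, this is a $j$-bond.
\item By Proposition~\ref{prop:bondchangecrit}, $c<c_0'\le e$.
\item If $e\ne c_0'$, then $e>c_0'$. Noncrossing with the $j$-bond $(d,c_0')$ (Proposition~\ref{prop:naturalmatch}(i)) gives $c<d<c_0$.
\item So $e$ is a column larger than $c_0'$ bonded to a column preceding $c_0$, contradicting the choice of $c_0'$.
\end{itemize}
Hence $d$ is the only possible such column. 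Since $c_1$ exists by Proposition~\ref{prop:gwelldefined}, in fact $c_1=d$.

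The paper avoids locating $d$ altogether. Since $r_0(c_1)$ is bonded and $r_1(c_1)$ is unbonded, Proposition~\ref{prop:bondchangecrit} already says the $r_0$-partner $e$ of $c_1$ satisfies $e\ge c_0'$. So every $c\in(c_1,c_0')$ lies strictly inside the $j$-bond $(c_1,e)$. Your argument then runs verbatim with $(c_1,e)$ in place of $(d,c_0')$: Proposition~\ref{prop:naturalmatch}(ii), then Proposition~\ref{prop:gremovedbond}.

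A minor point in Step~2(b): being the right end of the $j$-bond with $d$ does not by itself stop $r_0(c_0')$ from being $i$-unbonded. So the parenthetical ``bonded (to $d$)'' is not the reason $f$ avoids outputting $c_0'$. You don't need it anyway, since Proposition~\ref{prop:fremovedbond} alone forces the output column to equal $c_0$.
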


\begin{proof}
    Let $r_0(c_0)$ be $i$-unbonded. By definition, $c_0'$ is the largest column bonded to some column $d < c_0$ in $r_0$. In addition, note that $c_1 < c_0'$, and $c_1$ and $c_0'$ are both $j$-unbonded in $r_1$ for $j=3-i$ by Proposition~\ref{prop:gremovedbond}.

    Next, we show that $c_0'$ is the minimal $j$-unbonded column after $c_1$ in $r_1$. Suppose for the sake of contradiction that there exists some $j$-unbonded column $r_1(c)$ where $c_1 < c < c_0'$. We know that $r_0(c_1)$ is bonded to some $d\ge c_0'$. Since $c_1 < c < d$, by Proposition~\ref{prop:naturalmatch}, $r_0(c)$ cannot be $j$-unbonded, and it follows that $r_0(c)$ is bonded. In addition, $c\ne c_1$ and $c\ne c_0'$, so by Proposition~\ref{prop:gremovedbond}, it cannot be the case that $r_0(c)$ is bonded and $r_1(c)$ is unbonded. However, by definition, $r_1(c)$ is unbonded and we showed that $r_0(c)$ is bonded. As such, we have a contradiction, and $c_0'$ must be the minimal $j$-unbonded column such that $c_0' > c_1$.

    Finally, since $c_0'$ is the minimal $j$-unbonded column after $c_1$ in $r_1$, by definition of $f$, the $j$ entry in $r_1(c_0')$ is changed to $j+1$. Observe that by the definition of $g$, this constructs $r_0(c_0')$. Furthermore, by our assumption, either $r_1(c_0)$ is bonded or $c_0=c_0'$. We know that $r_0(c_0)$ is unbonded by definition, so if $r_1(c_0)$ is bonded, by Proposition~\ref{prop:fremovedbond}, it follows that $f(r_1,c_1)=(r_0,c_0)$. Meanwhile, if $c_0=c_0'$, then since $f$ changes the entries of $c_0=c_0'$, and $r_0(c_0)$ is unbonded, $f(r_1,c_1)=(r_0,c_0)$ by definition.
\end{proof}

\begin{proposition}\label{prop:gpreservation}
    Let $g(r_0,c_0)=(r_1,c_1)$. Let $c < c_1$ be bonded to $d > c$ in $r_0$. Then, $c$ and $d$ are also bonded in $r_1$. 
\end{proposition}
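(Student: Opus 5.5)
The plan is a case split on where $d$ sits relative to $c_0'$, using Proposition~\ref{prop:bondchangecrit} for the easy cases and a counting argument for the remaining one. Recall that $r_0$ and $r_1$ differ only in column $c_0'$, and that $c_1<c_0\le c_0'$ by Proposition~\ref{prop:gterminate}. Let $r_0(c_0)$ be $i$-unbonded and set $j=3-i$.

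\textbf{Case $d<c_0'$.} Here $c<d<c_0'$, so Proposition~\ref{prop:bondchangecrit}(i) gives directly that $c$ and $d$ remain bonded in $r_1$.

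\textbf{Case $d=c_0'$.} This case cannot occur. The column $c_0'$ is bonded in $r_0$ to some column $e<c_0$, and by the proposition limiting each column to at most one backward bond, $e$ is the unique such column. If $d=c_0'$ then $c=e$. But Proposition~\ref{prop:gremovedbond} says $c_1$ is the largest (indeed unique) column other than $c_0'$ that is bonded in $r_0$ and unbonded in $r_1$. Recall from the proof of Proposition~\ref{prop:gterminate} that $r_0(e)$ has a $j$ entry and no $j+1$ entry, and that $e$ stays bonded in $r_1$ to a column $\ge c_0'$. Together with the noncrossing property (Proposition~\ref{prop:naturalmatch}) and the fact that $r_1(c_1)$ is $j$-unbonded, this forces $c_1<e$. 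That contradicts $c<c_1$.

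\textbf{Case $d>c_0'$, so $c<c_1<c_0'<d$.} This is the main case. Suppose the bond is an $i'$-bond between $c$ and $d$.
\begin{enumerate}
\item First, $i'\ne j$. In $r_0$ the column $c_1$ is bonded to some $d_1\ge c_0'$ (proof of Proposition~\ref{prop:gterminate}). Since $r_1(c_1)$ is $j$-unbonded, it carries a $j$ entry with no $j+1$, so its bond in $r_0$ is a $j$-bond.
\item If $i'=j$, then Proposition~\ref{prop:naturalmatch}(i) forces $d>d_1$. Proposition~\ref{prop:naturalmatch}(ii), applied to $c_0'$ (which carries $j+1$ and no $j$), places the partner of $c_0'$ inside $[c,d]$, and so in $(c_1,d)$. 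But that partner is $e<c_0$, and the noncrossing condition with $c_1$ then requires $e>c_1$. I expect these two constraints to conflict with the maximality of $c_0'$, ruling out $i'=j$; this is the step I am least sure how to close.
\item So $i'=i$. Changing the $4-i$ entry to $3-i$ in $c_0'$ alters only the counts of the letters $j$ and $j+1$. For $i\ne j$ this leaves $\mu_i-\mu_{i+1}$ unchanged on every interval. Then $\mu_i(r_1([c,e]))-\mu_{i+1}(r_1([c,e]))$ agrees with the $r_0$ value for all $e\ge c$, so $d$ is still the minimal column with equality, and Proposition~\ref{prop:bondentryamounts} gives that $c$ and $d$ are bonded in $r_1$.
\end{enumerate}
To double-check step 3: for $i=1$ we have $j=2$, and the change turns a $3$ into a $2$, which does not affect the letters $1$ and $2$ as a $1$-pair count. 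Indeed $\mu_1-\mu_2$ changes by $-1$. So I must recheck that ``alters only $j$ and $j+1$'' really leaves the $i$-count invariant. Since $\{j,j+1\}$ meets $\{i,i+1\}$, it does not. The honest fallback is the following. The change shifts $\mu_i-\mu_{i+1}$ on intervals containing $c_0'$ by exactly $-1$ (computed in the proof of Proposition~\ref{prop:gwelldefined}). I would then use Corollary~\ref{cor:bondentryamounts} and the fact that the $i$-unbonded column $c_0$ lies in $(c,d)$ to show that $i'=i$ is in fact impossible. That leaves a $j$-bond, which I would handle as in step 2 by showing that minimality of $d$ survives.

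\textbf{Main obstacle.} The hardest part is this last case: pinning down which letter the bond uses and checking that the $\pm1$ shift at $c_0'$ does not create an earlier balancing point.
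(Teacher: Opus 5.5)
Your cases $d<c_0'$ and $d=c_0'$ are fine, but the case $d>c_0'$ is never closed, and it is the one you call the main case. You flag this yourself. Step 2 is left open. Step 3's invariance claim is false: as you notice, changing the $4-i$ entry to $3-i$ shifts $\mu_i-\mu_{i+1}$ by $-1$ on intervals containing $c_0'$. The fallback is only sketched and pulls in two directions: step 2 tries to rule out a $j$-bond, while the fallback needs to keep one and show minimality of $d$ survives. As written, the proposal does not prove the statement.

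The missing observation is that this case is vacuous, directly from the definition of $g$. By Proposition~\ref{prop:gterminate}, $c<c_1<c_0$, so $c$ precedes $c_0$. Hence $d$ is a column bonded to a column preceding $c_0$. If $d>c_0'\ge c_0$, this contradicts the choice of $c_0'$ as the \emph{largest} such column. So $d\le c_0'$ always, and your passing remark about the maximality of $c_0'$ in step 2 is the whole story, with no bond-type analysis needed.

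With that, only your first two cases remain: $d<c_0'$, handled by Proposition~\ref{prop:bondchangecrit}(i), and $d=c_0'$. This is exactly the paper's proof. Your exclusion of $d=c_0'$, via uniqueness of the backward partner $e$ of $c_0'$ and the reasoning from the proof of Proposition~\ref{prop:gterminate}, is valid. The paper does it more directly:
\begin{itemize}
\item $c$ carries $j$ without $j+1$, by noncrossing with the $i$-unbonded $c_0$.
\item $r_0(c_1)=r_1(c_1)$ also carries $j$ without $j+1$, and $c_1$ lies strictly inside the $j$-bond $(c,c_0')$ of $r_0$.
\item So by Proposition~\ref{prop:naturalmatch}, $c_1$ is bonded in $r_0$ to a column $<c_0'$.
\item By Proposition~\ref{prop:bondchangecrit}, $c_1$ is then still bonded in $r_1$, contradicting that $r_1(c_1)$ is unbonded.
\end{itemize}
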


\begin{proof}
    By Proposition~\ref{prop:gterminate}, $c_1 < c_0$. It follows that $c < c_0$. Now, we know that $c$ is bonded to some $d > c$. Since $c_0'$ is the maximal column bonded to some column $ < c_0$, it follows that $d\le c_0'$.

    Now, suppose for the sake of contradiction that $d = c_0'$. Let $r_0(c_0)$ be $i$-unbonded. Let $j=3-i$. Since $c < c_0$ and $d \ge c_0$, it must be the case that $c$ has a $j$ entry and no $j+1$ entry by Proposition~\ref{prop:naturalmatch}. Likewise, we know that $c_1$ has a $j$ entry and no $j+1$ entry. Since $c < c_1$ and $c_1 < d$, it follows that $c_1$ is bonded in $r_0$ to some column $< d=c_0'$ by Proposition~\ref{prop:naturalmatch}. Then, by Proposition~\ref{prop:bondchangecrit}, $r_1(c_1)$ is also bonded, which is impossible. Thus, we have a contradiction and see that $d\ne c_0'$.

    We conclude that $d < c_0'$. Since $c,d < c_0'$ and $c$ and $d$ are bonded in $r_0$, they remain bonded in $r_1$ by Proposition~\ref{prop:bondchangecrit}.
\end{proof}

\begin{proposition}\label{prop:gdouble}
    Let $g(r_0,c_0)=(r_1,c_1)$. Suppose $r_1(c_0)$ is bonded or $c_0=c_0'$, and there exists some column $\ge c_1$ bonded in $r_1$ to a column preceding $c_1$. Then, $g(r_1,c_1)=(r_2,c_2)$ is well-defined, and $r_2(c_1)$ is bonded or $c_1=c_1'$.
\end{proposition}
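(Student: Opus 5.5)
The first assertion should follow directly from Proposition~\ref{prop:gwelldefined}, once two things are checked. First, $c_1$ is unbonded in $r_1$; this holds because $c_1$ was chosen so that $r_1(c_1)$ is unbonded. Second, some column $\ge c_1$ is bonded in $r_1$ to a column preceding $c_1$; this is exactly the hypothesis. So $g(r_1,c_1)=(r_2,c_2)$ is well-defined, with $r_2$ ballot-admissible.

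For the second assertion, let $r_0(c_0)$ be $i$-unbonded and set $j=3-i$. Then $r_1(c_1)$ is $j$-unbonded by Proposition~\ref{prop:gremovedbond}. Let $c_1'$ be the largest column $\ge c_1$ bonded in $r_1$ to some $d<c_1$. If $c_1'=c_1$ we are done. Otherwise $c_1<c_1'$, and I will invoke Proposition~\ref{prop:gaddedbond} for $g(r_1,c_1)$. For that I need that no column $c$ with $c_1<c\le c_1'$ is $j$-unbonded in $r_1$.

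The tool for this is the argument in the proof of Proposition~\ref{prop:ginverse}, which applies because $r_1(c_0)$ is bonded or $c_0=c_0'$. It shows that $c_0'$ is the minimal $j$-unbonded column of $r_1$ after $c_1$, and that $f(r_1,c_1)=(r_0,c_0)$. It therefore suffices to prove $c_1'<c_0'$, and I will argue this by contradiction. The case $c_1'=c_0'$ is impossible, since $r_1(c_0')$ is unbonded by Proposition~\ref{prop:gremovedbond}. So assume $c_1'>c_0'$.

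Since $c_1$ is $j$-unbonded and lies strictly between $d$ and $c_1'$, Proposition~\ref{prop:naturalmatch} forces the bond $d$--$c_1'$ to be an $i$-bond. Also $d<c_1<c_0\le c_0'<c_1'$, using Proposition~\ref{prop:gterminate}. I would then transport this bond back to $r_0$. Relative to the single changed column $c_0'$, apply Proposition~\ref{prop:bondchangecrit} together with Propositions~\ref{prop:fremovedbond}, \ref{prop:faddedbond} and \ref{prop:fpreservation}, all for the pair $f(r_1,c_1)=(r_0,c_0)$. The goal is to conclude that, in $r_0$, either $d$ is bonded to a column $\ge c_0'$ or $c_1'$ is bonded to a column $<c_0$. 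In the latter case we contradict the maximality of $c_0'$ in the definition of $g(r_0,c_0)$. In the former case, $c_0$ is $i$-unbonded in $r_0$ and lies strictly inside an $i$-bond, which contradicts Proposition~\ref{prop:naturalmatch}.

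I expect this transport step to be the main obstacle. The change at $c_0'$ alters the $i$/$(i+1)$ counts as well as the $j$ counts, so the $i$-bond $d$--$c_1'$ need not persist unchanged between $r_0$ and $r_1$. My first attempt will be a direct count. Since $c_0$ is $i$-unbonded in $r_0$, Corollary~\ref{cor:bondentryamounts} gives a surplus of $i$ over $i+1$ on $[c_0,c_1']$ in $r_0$. The change at $c_0'$ shifts that surplus by exactly one. Comparing with the equality $\mu_i=\mu_{i+1}$ on $[d,c_1']$ in $r_1$, I expect this to show that $c_0$ is $i$-bonded inside $[c_0,c_1']$ in $r_1$ only when the hypothesis case $c_0=c_0'$ occurs. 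I would then handle that case separately, using the fact that $r_1(c_0')$ is unbonded. Once $c_1'<c_0'$ is established, Proposition~\ref{prop:gaddedbond} gives that $r_2(c_1)$ is bonded, which completes the proof.
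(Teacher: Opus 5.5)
Your reduction is sound: well-definedness is immediate, and the bond $d$--$c_1'$ must be an $i$-bond. Since the argument in the proof of Proposition~\ref{prop:ginverse} shows that $c_0'$ is the first $j$-unbonded column of $r_1$ after $c_1$, it does suffice to prove $c_1'<c_0'$ and then quote Proposition~\ref{prop:gaddedbond}. That ending is slightly leaner than the paper's, which proves $c_1'<c_0$ and then re-checks the interval $(c_1,c_0)$ by hand. But $c_1'<c_0'$ is the entire content of the proposition, and your proposal does not prove it.

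First, your exclusion of $c_1'=c_0'$ is invalid. In the paper, ``unbonded'' only says that $c_0'$ is not the \emph{left} end of a $j$-bond, whereas $c_1'$ is the \emph{right} end of an $i$-bond, and these are compatible. The column $r_1(c_0')$ is $(2)$ or $(12)$ if $i=1$, and $(1)$ or $(13)$ if $i=2$. In the cases $(2)$ and $(13)$, ballot-admissibility of $r_1$ forces $c_0'$ to be the right end of an $i$-bond, whose left end could a priori be your $d$.

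Second, for $c_1'>c_0'$ you only state a target dichotomy and an expected outcome. The listed tools do not produce it as stated: Proposition~\ref{prop:fpreservation}, applied to $f(r_1,c_1)=(r_0,c_0)$, carries bonds from $r_0$ back to $r_1$, which is the wrong direction. Also, the predicted result of your count (that $c_0$ becomes $i$-bonded only when $c_0=c_0'$) is not what the count actually yields.

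The missing idea, which is the paper's, is to look at the column $d$ by itself rather than transport the straddling bond $d$--$c_1'$.
\begin{itemize}
\item By Proposition~\ref{prop:ginverse}, $f(r_1,c_1)=(r_0,c_0)$. So Proposition~\ref{prop:fremovedbond} says the only column that is bonded in $r_1$ and unbonded in $r_0$ is $c_0$.
\item Since $d<c_1<c_0$, $d$ is bonded in $r_0$, say to $e$.
\item As $r_0(d)=r_1(d)$ has an $i$ entry and $c_0$ is $i$-unbonded in $r_0$, Proposition~\ref{prop:naturalmatch} forces $e<c_0$.
\item Then $d<e<c_0'$, so Proposition~\ref{prop:bondchangecrit} keeps $d$ and $e$ bonded in $r_1$. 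Uniqueness of the forward bond from $d$ gives $e=c_1'$.
\end{itemize}
Hence $c_1'<c_0\le c_0'$, which disposes of $c_1'=c_0'$ and $c_1'>c_0'$ at once.

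Alternatively, your count does close the gap if you actually run it under the single assumption $c_1'\ge c_0'$.
\begin{itemize}
\item The surplus $\mu_i-\mu_{i+1}\ge 1$ on $[c_0,c_1']$ in $r_0$ drops by exactly one at $c_0'$, so it is still $\ge 0$ in $r_1$.
\item Combined with equality on $[d,c_1']$ in $r_1$, this gives $\mu_i\le\mu_{i+1}$ on $[d,c_0-1]$ in $r_1$.
\item Corollary~\ref{cor:bondentryamounts} then yields $d=b_i(e)$ for some $e<c_0\le c_1'$. This contradicts injectivity of $b_i$, because also $d=b_i(c_1')$.
\end{itemize}
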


\begin{proof}
    Let $r_0(c_0)$ be $i$-unbonded. We assume that there exists some column $\ge c_1$ bonded in $r_1$ to a column preceding $c_1$. As such, $g(r_1,c_1)=(r_2,c_2)$ is well-defined by definition of $g$. 
    We now show that either $r_2(c_1)$ is bonded or $c_1=c_1'$. We know that $r_1(c_1)$ is $j$-unbonded for $j=3-i$. By Proposition~\ref{prop:gaddedbond}, it is sufficient to show that for all $c_1 < c\le c_1'$, $r_1(c)$ is not $j$-unbonded. 

    First, by definition, there exists some $d < c_1$ where $d$ and $c_1'$ are bonded in $r_1$. Since $r_1(c_1)$ is $j$-unbonded, it follows that $d$ has an $i$ entry and no $i+1$ entry. Then, by Proposition~\ref{prop:gterminate}, $c_1 < c_0$, so $d < c_0$. Since $r_0(c_0)$ is $i$-unbonded, either $r_0(d)$ is $i$-unbonded or $d$ is bonded to some column $<c_0$ in $r_0$. Since we assume that either $r_1(c_0)$ is bonded or $c_0=c_0'$, $f(r_1,c_1)=(r_0,c_0)$ by Proposition~\ref{prop:ginverse}. We know that $r_1(d)$ is bonded, so if $r_0(d)$ is unbonded, then $d=c_0$ by Proposition~\ref{prop:fremovedbond}. However, we already showed that $d < c_0$, so this is impossible. As such, we conclude that $d$ is bonded to some column $e <c_0$ in $r_0$. Then, by definition, $c_0 \le c_0'$, so $d,e < c_0'$, and by Proposition~\ref{prop:bondchangecrit}, $d$ and $e$ are also bonded in $r_1$. It follows that $e=c_1'$, so $c_1' < c_0$. 

    Now, since $c_1' < c_0$, it is sufficient to show that no columns $r_1(c)$ for $c_1 < c < c_0$ which are $j$-unbonded may exist. Let $c_1 < c < c_0$. We know that $r_0(c_1)$ has a $j$ entry and no $j+1$ entry. Furthermore, since $r_0(c_1)$ is bonded while $r_1(c_1)$ is unbonded, $r_0(c_1)$ is bonded to some $e\ge c_0'$ by Proposition~\ref{prop:bondchangecrit}. Note that $c_1 < c < e$. Then, by Proposition~\ref{prop:naturalmatch}, since $c_1$ and $e$ are bonded in $r_0$, $r_0(c)$ cannot be $j$-unbonded. Since $c\ne c_0'$ and $c\ne c_1$, by Proposition~\ref{prop:gremovedbond}, it cannot be the case that $r_0(c)$ is bonded and $r_1(c)$ is unbonded. If $r_1(c)$ is $j$-unbonded, since $c\ne c_0'$, it then follows that $r_0(c)$ is $j$-unbonded or $r_0(c)$ is bonded. We have shown above that $r_0(c)$ cannot be $j$-unbonded, and it cannot be the case that $r_0(c)$ is bonded while $r_1(c)$ is unbonded. As such, $r_1(c)$ cannot be $j$-unbonded. We now conclude that $r_2(c_1)$ is bonded or $c_1=c_1'$ by Proposition~\ref{prop:gaddedbond}.
\end{proof}

\begin{proposition}\label{prop:fdouble}
    Let $f(r_0,c_0)=(r_1,c_1)$. Let $r_1(c_1)$ be $i$-unbonded. Suppose that there exists some $i$-unbonded column after $c_1$ in $r_1$. Then, $f(r_1,c_1)$ is well-defined.
\end{proposition}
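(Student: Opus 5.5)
The plan is to reduce the statement to Proposition~\ref{prop:fwelldefined}. The definition of $f(r_1,c_1)$ only needs a next $i$-unbonded column $c_1'>c_1$ in $r_1$, and this exists by hypothesis. So by Proposition~\ref{prop:fwelldefined} it suffices to rule out alternative (ii) for the pair $(r_1,c_1)$: that $r_1(c_1)\in\{(1),(12)\}$ and $c_1'=c_1+1$.

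A short case check shows what (ii) would require. If $r_1(c_1)=(12)$ then $i=2$, and changing the $2$ in $r_1(c_1+1)$ must produce the forbidden $(12)(3)$, so $r_1(c_1+1)=(2)$. If $r_1(c_1)=(1)$ then $i=1$, and we need $r_1(c_1+1)=(13)$. Throughout I set $k=3-i$ for the index attached to $(r_0,c_0)$; by Proposition~\ref{prop:fremovedbond} and the definition of $f$, $r_1(c_1)$ is $k$-unbonded, so $i=k$ here. I then split on how $c_1$ arose.

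\emph{Case $c_1=c_0'$.} Here $r_1(c_0')$ contains a $(k+1)$ entry and no $k$ entry, because $f$ turned the $k$ in that column into $k+1$. A column with that property is never $(1)$ or $(12)$, since each of those contains both $1$ and, for $(12)$, $2$. So (ii) fails immediately.

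\emph{Case $c_1\neq c_0'$.} Now $c_1<c_0'$, and $c_1$ is bonded in $r_0$ to some $e\ge c_0'$ (Proposition~\ref{prop:bondchangecrit}), while it is unbonded in $r_1$. First suppose $c_1+1<c_0'$. Then $r_1(c_1+1)=r_0(c_1+1)$. By Proposition~\ref{prop:naturalmatch} it cannot be $i$-unbonded in $r_0$, since it lies strictly inside the bond $(c_1,e)$. By Proposition~\ref{prop:fremovedbond} it also cannot be bonded in $r_0$ yet unbonded in $r_1$, since it differs from $c_1$. This is exactly the argument of Proposition~\ref{prop:fterminate}, and it shows $c_1+1$ is not $i$-unbonded in $r_1$, so $c_1'\neq c_1+1$. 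The remaining possibility is $c_1+1=c_0'$, with $r_1(c_0')$ equal to $(2)$ or $(13)$ respectively.

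The main obstacle is the final subcase $c_1+1=c_0'$. Pulling back to $r_0$, we would have $r_0(c_1)=(12)$, $r_0(c_0')=(1)$ (for $k=1$), or $r_0(c_1)=(1)$, $r_0(c_0')=(12)$ (for $k=2$). In both, $c_0<c_1$, $c_0$ is $k$-unbonded in $r_0$, and $c_0'=c_1+1$ is the first $k$-unbonded column after $c_0$. My first attempt is a counting argument on $[c_0,c_0']$ via Proposition~\ref{prop:bondentryamounts} and Corollary~\ref{cor:bondentryamounts}. Every $k$-column strictly between $c_0$ and $c_0'$ is bonded inside that interval, and $c_1$ is bonded past $c_0'$; combining these should contradict either the semistandardness of $r_0$ (a $(12)(3)$ or $(1)(23)$ pattern near $c_1$) or the ballot condition at $c_1$. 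The other hypothesis I would bring in is that $c_1$ was produced by $f$ from $(r_0,c_0)$, so $c_1$ is the largest column bonded in $r_0$ and unbonded in $r_1$. If that does not close the gap, I would analyse the column $c_1-1$ directly using the alternation of column sizes in ribbon shapes.
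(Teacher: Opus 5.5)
\textbf{Verdict: there is a genuine gap, though a small and easily closed one.} Your reduction to Proposition~\ref{prop:fwelldefined}, your case $c_1=c_0'$, and your subcase $c_1+1<c_0'$ are correct and match the paper's argument. The gap is the subcase you call the main obstacle, $c_1+1=c_0'$ (equivalently $c_1'=c_0'$). You leave it unresolved, offering only a tentative counting strategy that you yourself are not sure will close it. So as written the proof is incomplete.

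What you are missing is that this subcase is excluded by the definition of $f$ itself:
\begin{enumerate}
\item[(a)] The map $f$ sets $c_1=c_0'$ exactly when $r_1(c_0')$ is unbonded.
\item[(b)] Otherwise $f$ picks $c_1$ among columns that are bonded in $r_0$. This rules out $c_0'$, which is unbonded in $r_0$.
\item[(c)] Hence in your case $c_1\neq c_0'$, the column $r_1(c_0')$ is \emph{not} unbonded. It therefore cannot be the next $i$-unbonded column $c_1'$ after $c_1$, so $c_1'\neq c_0'$.
\item[(d)] Together with $c_1<c_0'$ and $c_1'=c_1+1$, this gives $c_1'<c_0'$. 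So only your first subcase can occur.
\end{enumerate}
Your argument in that first subcase then finishes the proof: Proposition~\ref{prop:naturalmatch} applied to the bond from $c_1$ to $e\ge c_0'$, followed by Proposition~\ref{prop:fremovedbond}. This is exactly how the paper concludes.

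A semistandardness argument near $c_1$, as you suggest, cannot work. The pulled-back columns, $(12)(1)$ in $r_0$ and $(12)(2)$ in $r_1$ (or $(1)(12)$ and $(1)(13)$), are perfectly compatible with semistandardness. The contradiction comes only from the bond status of $c_0'$ in $r_1$, and your case hypothesis already gives you that, so no counting over $[c_0,c_0']$ is needed.

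There is also a small indexing slip in your case $c_1=c_0'$. There $f$ changes the $(3-k)$ entry of $r_0(c_0')$ (the index of $c_0$), not the $k$ entry. Your conclusion $r_1(c_0')\notin\{(1),(12)\}$ is unaffected: for $m\in\{1,2\}$, any column containing an $m+1$ entry but no $m$ entry is neither $(1)$ nor $(12)$.
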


\begin{proof}
    By our assumption, we know that $c_1'$ is well-defined. Then, by Proposition~\ref{prop:fwelldefined}, it is sufficient to show that it cannot be the case that $r_1(c_1)=(1)$ or $(12)$ and $c_1'=c_1+1$. Suppose for the sake of contradiction that $r_1(c_1)=(1)$ or $(12)$ and $c_1'=c_1+1$.

    In the case where $c_1=c_0'$, we know that $c_0$ is bonded to $c_0'$ in $r_1$, so it cannot be the case that $r_1(c_1)=(1)$ or $(12)$. As such, we have a contradiction.
    
    Next, consider the case where $c_1 \ne c_0'$. We then know by definition that $r_0(c_1)$ is bonded while $r_1(c_1)$ is unbonded. By Proposition~\ref{prop:bondchangecrit}, it follows that $c_1 < c_0'$ and $c_1$ is bonded in $r_0$ to some column $\ge c_0'$. In addition, in this case, $r_1(c_0')$ is not unbonded, so $c_1'\ne c_0'$. As $c_1'=c_1+1$, it then follows that $c_1' < c_0'$. Then, since $r_0(c_1)$ has an $i$ entry and no $i+1$ entry, and $c_1$ is bonded in $r_0$ to some column $\ge c_0' > c_1'$, it cannot be the case that $r_0(c_1')$ is $i$-unbonded. Since $r_1(c_1')$ is $i$-unbonded, it follows that $r_0(c_1')$ is bonded while $r_1(c_1')$ is unbonded. As such, by Proposition~\ref{prop:fremovedbond}, $c_1'=c_1$, but $c_1'=c_1+1$, so this is impossible.

    Thus, we have a contradiction in both cases. We conclude that it cannot be the case that $r_1(c_1)=(1)$ or $(12)$ and $c_1'=c_1+1$. By Proposition~\ref{prop:fwelldefined}, we now know that $f(r_1,c_1)$ is well-defined.
\end{proof}

\subsection{Peakless Motzkin Path Bijection}
\begin{definition}
    Let $r$ be a ballot-admissible ribbon of length $n$. We define $\psi_1(r)$ to be an atomic ribbon of length $n+2$ as follows. Add a column $(12)$ as the rightmost column of $r$ in order to create $r_0$, a ballot-admissible ribbon of length $n+1$. Then, let $c_0$ be this new initial column. While $f(r_i,c_i)$ is well-defined, we let $(r_{i+1},c_{i+1})=f(r_i,c_i)$. If $f(r_i,c_i)$ is not well-defined, we add a final column as a leftmost column to the end of $r_i$ in order to construct $\psi_1(r)$. If $r_i(c_i)=(1)$ or $(13)$ then we add $(2)$ or $(23)$. If $r_i(c_i)=(12)$ or $(2)$ then we add $(13)$ or $(3)$. Note that this depends on the value of $n$ mod 2.
\end{definition}

\begin{proposition}
    Let $r$ be a ballot-admissible ribbon of length $n$ for $n\ge 1$. Then, $\psi_1(r)$ is a well-defined atomic ribbon of length $n+2$.
\end{proposition}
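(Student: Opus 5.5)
We must show four things in turn: $r_0$ is ballot-admissible, the iteration of $f$ terminates, the appended column keeps the tableau a ballot-admissible Fibonacci ribbon, and the result is atomic.

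\emph{Initialization.} Appending $(12)$ as the new rightmost column gives a ribbon or dual ribbon of length $n+1$ by the Remark on removing columns. The linear form of $r_0$ begins with $12$, so no prefix can violate the ballot condition. In particular $\mu_1\ge\mu_2$ holds for the prefix $1$, and adding $(12)$ in front leaves every difference $\mu_j-\mu_{j+1}$ of later prefixes unchanged or increased by one. The column pattern $(12)(3)$ cannot arise, since $(12)$ is first in reading order and is read before the old column. Thus $r_0$ is ballot-admissible, and column $1$ is $1$-unbonded because its $1$ is the leftmost $1$ and nothing before it can be matched.

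\emph{Termination.} I will show the while-loop is finite. Each application $f(r_k,c_k)=(r_{k+1},c_{k+1})$ creates a bond between $c_k$ and $c_k'$ (Proposition~\ref{prop:faddedbond}). By Proposition~\ref{prop:fterminate}, the right endpoints $c_k'$ strictly increase. Since they are bounded by $n+1$, the loop stops after at most $n$ steps. At each step, Proposition~\ref{prop:fwelldefined} ensures $r_{k+1}$ is ballot-admissible with $c_{k+1}$ unbonded, provided case (ii) does not occur. The remark following that proposition treats case (ii) as "not well-defined", which is also a stopping condition.

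\emph{Final column.} When the loop stops at $(r_m,c_m)$, there are two possibilities. Either there is no unbonded column of the same type after $c_m$, or case (ii) of Proposition~\ref{prop:fwelldefined} occurs. Proposition~\ref{prop:fdouble} excludes case (ii) for $m\ge1$. For $m=0$, case (ii) needs $c_0'=2$ with $r_0(2)$ $1$-unbonded, which must be checked directly. The main obstacle is this step: showing that the new leftmost column is semistandard relative to column $n+1$ and that the bond it forms is with $c_m$.

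The leftmost column of $r_m$ has length opposite to the added column, and the added column is chosen so that it contains $i+1$ but not $i$, where $c_m$ is $i$-unbonded. I would argue that $c_m$ is the \emph{last} $i$-unbonded column, because the loop stopped. Then every column after $c_m$ either has its $i$ matched or has no lone $i$, so $\mu_i-\mu_{i+1}$ over $[c_m,n+1]$ equals exactly $1$, and appending the $i+1$ entry makes it $0$. By Corollary~\ref{cor:bondentryamounts} and Proposition~\ref{prop:bondentryamounts}, the new column $n+2$ is then bonded to $c_m$. Ballot-admissibility of the full word follows because $\mu_i-\mu_{i+1}\ge1$ over $[1,n+1]$ (the unbonded $c_m$ contributes at least one and the prefix $[1,c_m-1]$ is nonnegative), exactly as in Proposition~\ref{prop:fwelldefined}.

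For semistandardness, a forbidden $123$ pattern could only arise from $(12)(3)$ or $(1)(23)$ across columns $n+1,n+2$. So I must rule out $r_m(n+1)=(12)$ with $(3)$ appended, and $r_m(n+1)=(1)$ with $(23)$ appended. I would handle both with the same argument used in Proposition~\ref{prop:fwelldefined}. If column $n+1$ were $(1)$ or $(12)$ and distinct from $c_m$, it would have to be bonded to a later column, which is impossible since it is last. So it would be $1$-unbonded, and then Proposition~\ref{prop:naturalmatch} forces a contradiction with the choice of appended value, which targets $i=2$.

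\emph{Atomicity.} Finally I use Proposition~\ref{prop:atomiccrit}: every column $c\ge2$ must be covered by some bond $a<c\le b$. The bonds $c_k\leftrightarrow c_k'$ created along the way, together with the final bond $c_m\leftrightarrow n+2$, form a chain. Propositions~\ref{prop:fuseful} and~\ref{prop:fpreservation} show that bonds spanning past $c_k'$ start before $c_k$ and survive from earlier steps. Hence the intervals $[c_0,c_0'],[c_1,c_1'],\dots,[c_m,n+2]$ remain bonded in the final ribbon and overlap, since $c_{k+1}\le c_k'<c_{k+1}'$. Because $c_0=1$, their union covers $[2,n+2]$, which gives atomicity.
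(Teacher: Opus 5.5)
Your outline matches the paper's proof: initialize with $(12)$, terminate via Proposition~\ref{prop:fterminate}, rule out case (ii) of Proposition~\ref{prop:fwelldefined} by Proposition~\ref{prop:fdouble} plus a direct check at step $0$, append a column bonded to $c_m$, and get atomicity from the chain of bonds and Proposition~\ref{prop:atomiccrit}. However, the step you flag as the main obstacle is never carried out, and it rests on a misreading of the definitions. The column $(12)$ is not $1$-unbonded. It has an $i$ entry and no $i+1$ entry only for $i=2$, so $c_0=1$ is $2$-unbonded. Nor is it unbonded because ``nothing before it can be matched'': unbonded means no \emph{later} column bonds to it. That holds because every column of $r$ with a $3$ and no $2$ is already bonded inside $r$, by ballot-admissibility of $r$. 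With your labeling the $m=0$ check would actually fail. For $r=(1)$ we get $r_0=(12)(1)$, and $r_0(2)=(1)$ is $1$-unbonded, so case (ii) would occur at the very first step. With the correct labeling the check is one line: $r_0(2)=r(1)=(1)$, since a ballot word begins with $1$, and this column has no $2$, so $c_0'\neq 2$. The same fact $r(1)=(1)$ is what excludes $(12)(3)$ at initialization; $(12)$ being first in reading order does not.

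The final-column step also has a hole. Your semistandardness argument only treats the case where column $n+1\neq c_m$. Even there, the contradiction is simpler than you state, and your labels are off. Column $n+1$ would be an unbonded column of the same type as $c_m$ lying after $c_m$: $(1)$ is $1$-unbonded when $(23)$ is appended, and $(12)$ is $2$-unbonded when $(3)$ is appended. This contradicts maximality of $c_m$ directly. The case $c_m=n+1$ is missing entirely. Then $m\ge 1$ (as $c_0=1<n+1$), and since $c_{k+1}\le c_k'$ always holds, we get $c_m=c_{m-1}'$. That column's lone entry $i'$ was just changed to $i'+1$, so it contains $i'+1$ but not $i'$, and is therefore neither $(1)$ nor $(12)$.

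There are two smaller points. First, you claim $\mu_i-\mu_{i+1}=1$ on $[c_m,n+1]$, which you need in order to identify $c_m$ as the partner of column $n+2$. This also requires that no column after $c_m$ with an $i+1$ and no $i$ is bonded to a column before $c_m$; otherwise Proposition~\ref{prop:naturalmatch}(ii) would bond $c_m$. Second, in the atomicity step, the bonds $c_k\leftrightarrow c_k'$ survive by Proposition~\ref{prop:bondchangecrit}(i). This works because every later modified column $c_j'$ ($j>k$) exceeds $c_k'$ by Proposition~\ref{prop:fterminate}, and the appended column bonds only to $c_m$. Propositions~\ref{prop:fuseful} and~\ref{prop:fpreservation} concern bonds reaching past $c_k'$ and do not give this.
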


\begin{proof}
    Given $(r_{i+1},c_{i+1})=f(r_i,c_i)$, let $c_i'$ be the column whose entries in $r_i$ and $r_{i+1}$ differ.

    First, since $r$ is ballot-admissible, it immediately follows that $r_0=(12)r$ is ballot-admissible and that given $c_0=1$, $r_0(c_0)$ is unbonded. Now, we note that $r_0(c_0)=(12)$, while $r_0(c_0+1)=r(1)=(1)$, so it is impossible that $c_0'=c_0+1$. It follows that if $c_0'$ is well-defined, $f(r_0,c_0)$ is well-defined by Proposition~\ref{prop:fwelldefined}. Likewise, for $i\ge 1$, if $(r_i,c_i)=f(r_{i-1},c_{i-1})$ is well-defined, and $c_i'$ is well-defined, then by Proposition~\ref{prop:fdouble}, $f(r_i,c_i)$ is also well-defined.

    Now, by Proposition~\ref{prop:fterminate}, $c_i' < c_{i+1}'$ for all $i\ge 0$, so as we have only a finite number of columns, there eventually exists some $k\ge 0$ such that $f(r_k,c_k)$ is not well-defined. By our argument above, if $c_k'$ is well-defined, then $f(r_k,c_k)$ is well-defined. As such, since $f(r_k,c_k)$ is not well-defined, $c_k'$ cannot be well-defined. This means that if $c_k$ is $a$-unbonded, $c_k$ is the maximal $a$-unbonded column in $r_k$. Then, we construct $\psi_1(r)$ by adding a column to the end of $r_k$ with an $a+1$ entry and no $a$ entry.

    We must show that $\psi_1(r)$ is a Fibonacci ribbon. By Proposition~\ref{prop:ribboncondition}, it is sufficient to show that there are no $(1)(23)$ or $(12)(3)$ patterns in the columns of $\psi_1(r)$. Since $r_k$ is a Fibonacci ribbon, it is sufficient to show that if we add $(23)$ or $(3)$ as the leftmost column, the leftmost column of $r_k$ is not $(1)$ or $(12)$ respectively. We add $(23)$ to the left of $r_k$ only if $r_k(c_k)=(1)$ or $(13)$, i.e. $r_k(c_k)$ is 1-unbonded. As we showed above, if $r_k(c_k)$ is 1-unbonded, then $c_k$ is the maximal 1-unbonded column in $r_k$. As such, if the leftmost column of $r_k$ is $(1)$, then $c_k$ is the leftmost column of $r_k$. As $c_k\le c_{k-1}'$, it follows that $c_k=c_{k-1}'$. However, $r_k(c_{k-1}')\ne (1)$ by definition of $f$, so this is impossible. As such, if we add $(23)$ to $r_k$, the leftmost column of $r_k$ cannot be $(1)$. An analogous argument holds for $(12)$ and $(3)$, so we conclude that $\psi_1(r)$ is a Fibonacci ribbon as desired.

    Furthermore, let $d$ be the final column of $r_k$. Since $c_k$ is the maximal $a$-unbonded column in $r_k$, all $a$ entries in $r_k([c_k+1,d])$ must be bonded to $a+1$ entries in $r_k([c_k+1,d])$. Likewise, all $a+1$ entries in $r_k([c_k+1,d])$ must be bonded to $a$ entries in $r_k([c_k+1,d])$, as otherwise $c_k$ could not be $a$-unbonded by Proposition~\ref{prop:naturalmatch}. It follows that $\mu_a(r_k([c_k,d]))=\mu_{a+1}(r_k([c_k,d]))+1$. This is sufficient to conclude that adding a column to the end with an $a+1$ entry and no $a$ entry preserves ballot-admissibility, and so $\psi_1(r)$ is ballot-admissible.
    
    Then, we see that $c_k$ is bonded to $d+1$ in $\psi_1(r)$. Note that no other columns can be bonded to $d+1$ in $\psi_1(r)$. As such, all other bonds in $\psi_1(r)$ are defined based on the entries $\psi_1(r)([1,d])=r_k$ and so are the same as in $r_k$. 

    Finally, we show that $\psi_1(r)$ is atomic. By Proposition~\ref{prop:faddedbond}, $c_i$ and $c_i'$ are bonded in $r_{i+1}$ for all $i < k$. Now, $c_i < c_i'$, and $c_i' < c_{i+1}'$ by Proposition~\ref{prop:fterminate}, so $c_i, c_i' < c_j'$ for all $j\ge i+1$. By Proposition~\ref{prop:bondchangecrit}, this means that for all $j\ge i+1$, $c_i$ and $c_i'$ are bonded in $r_j$. As we showed above, adding a leftmost column to construct $\psi_1(r)$ does not change the bonds in the final $r_j$. We conclude that in $\psi_1(r)$, $c_i$ is bonded to $c_i'$ for all $i$ except for our final $c_i$, which is bonded to the $d+1$ in $\psi_1(r)$ as we showed above.
    
    Now, take some arbitrary column $c > 1$ of $\psi_1(r)$. If for some $i$, $c_i < c \le c_i'$, then we have a column $c_i < c$ bonded to a column $c_i'\ge c$ in $\psi_1(r)$. Otherwise, say that for all $i < k$, it is not the case that $c_i < c \le c_i'$. Then, for any $i < k$, if $c_i < c$, it follows that $c_i' < c$, so since $c_{i+1} \le c_i'$, $c_{i+1} < c$. We know that $c_0=1 < c$, so by induction, it follows that $c_{i+1} < c$ for all $0\le i < k$. Then, $c_k < c$, and we know that $c \le d+1$, so we have $c_k < c$ bonded to $d+1\ge c$ in $\psi_1(r)$. Thus, in all cases, we satisfy the criterion given in Proposition~\ref{prop:atomiccrit} and we conclude that $\psi_1(r)$ is atomic.
\end{proof}

\begin{definition}
    Let $r$ be an atomic ribbon of length $n+2$. We define $\psi_2(r)$ to be a ballot-admissible ribbon of length $n$ as follows. Remove the final, leftmost column of $r$ to construct $r_0$. Let $c_0$ be the column bonded to the final column in $\psi_2(r)$. Then, let $(r_{i+1},c_{i+1})=g(r_i,c_i)$. This continues until $c_i$ is the rightmost column of $r_i$. We then remove $c_i$ to construct $\psi_2(r)$.
\end{definition}

\begin{proposition}\label{prop:psi2welldefined}
    Let $r$ be an atomic ribbon of length $n+2$ for $n\ge 1$. Then, $\psi_2(r)$ is a well-defined ballot-admissible ribbon of length $n$.
\end{proposition}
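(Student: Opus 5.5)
We follow the proof for $\psi_1$, with $g$ in place of $f$ and Propositions~\ref{prop:gwelldefined}, \ref{prop:gterminate} and \ref{prop:gdouble} in place of Propositions~\ref{prop:fwelldefined}, \ref{prop:fterminate} and \ref{prop:fdouble}.

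\textbf{Step 1: the starting pair.} Let $d+1=n+2$ be the leftmost column of $r$, and let $r_0=r([1,n+1])$. Since $r$ is ballot-admissible, $r_0$ is ballot-admissible, as every prefix of the linear form of $r_0$ is a prefix of that of $r$. The leftmost column of $r$ is the end of a maximal interval, so it has an $a+1$ entry and no $a$ entry. By Proposition~\ref{prop:atomiccrit} applied at $c=n+2$, it is bonded to some $c_0<n+2$; here $c_0$ is read as the column bonded to the final column of $r$. Removing column $n+2$ destroys only this bond, by the reasoning in the proof for $\psi_1$. So $r_0(c_0)$ is $a$-unbonded, and every other bond of $r$ among columns of $[1,n+1]$ persists in $r_0$.

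\textbf{Step 2: $g$ is defined while $c_i\neq 1$.} Suppose $c_i>1$. If $i=0$, atomicity of $r$ (Proposition~\ref{prop:atomiccrit} with $c=c_0$) gives a bond $e<c_0\le b$ in $r$. The bond $(c_0,n+2)$ cannot cross it, by Proposition~\ref{prop:naturalmatch}(i), so $b\le n+1$ and the bond survives in $r_0$. Hence some column $\ge c_0$ is bonded to a column before $c_0$, and $g(r_0,c_0)$ is defined; Proposition~\ref{prop:gwelldefined} makes $r_1$ ballot-admissible with $c_1$ unbonded.

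For $i\ge1$ we also need the hypothesis of Proposition~\ref{prop:gdouble}, namely that $r_i(c_{i-1})$ is bonded or $c_{i-1}=c_{i-1}'$, carried along inductively. For $i=1$ it should follow from Proposition~\ref{prop:gaddedbond}: we must check that no column in $(c_0,c_0']$ is $a$-unbonded in $r_0$. Any such column lies strictly inside the bond $(c_0,n+2)$ of $r$, so in $r$ it is bonded to a column before $n+2$. That bond is unaffected by the removal, which excludes it.

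Granting the hypothesis, Proposition~\ref{prop:gdouble} gives well-definedness at step $i+1$ once we know some column $\ge c_i$ is bonded to one before $c_i$. To get this, I will show inductively that every column $c$ with $1<c\le n+1$ lies under some bond of $r_i$. This should hold because $g$ only redirects bonds across $[c_i,c_{i-1}']$. Proposition~\ref{prop:gpreservation} keeps the bonds to the left of $c_{i+1}$, the new bond at $c_i$ comes from Proposition~\ref{prop:gaddedbond}, and Proposition~\ref{prop:bondchangecrit} keeps the bonds to the right of $c_i'$. This covering invariant is the main obstacle, and I expect to mimic the final paragraph of the $\psi_1$ proof, run in reverse.

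\textbf{Step 3: termination and the final column.} By Proposition~\ref{prop:gterminate} the $c_i$ strictly decrease, so eventually $c_k=1$, the rightmost column. Then $r_k(1)$ is unbonded, so its entries must be $(1)$ or $(12)$, since a $2$ or $3$ without its predecessor would violate ballot-admissibility.

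Deleting the rightmost column shifts the linear form. We must show the shifted word $r_k([2,n+1])$ is still a ballot prefix. An unbonded column $1$ means no later column's $a+1$ is matched to it. By Theorem~\ref{thm:lrcondition} and the Dyck-prefix matching, each later $b_i$ is then matched inside $[2,n+1]$, exactly as in the proof of Proposition~\ref{prop:atomiccrit}. So $r_k([2,n+1])$ is ballot-admissible.

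Finally, the shape: removing the rightmost column yields the dual shape of length $n$, which we treat as equivalent to the ribbon shape via $r\mapsto r^*$. Removing a column creates no new $123$ pattern, and semistandardness is inherited, so $\psi_2(r)$ is a ballot-admissible ribbon of length $n$.
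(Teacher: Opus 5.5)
\textbf{Gap in Step 2: the covering invariant is false.} The gap is the step you flag as the main obstacle. To apply Proposition~\ref{prop:gdouble} at stage $i$ you need a bond $(c,d)$ of $r_i$ with $c<c_i\le d$. You plan to get it from the invariant that every column of $(1,n+1]$ lies under a bond of $r_i$, but that invariant is false.

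Take the atomic ribbon $r=(1)(12)(2)(13)(1)(13)$, so $n=4$, with bonds $(1,3)$, $(3,4)$, $(2,6)$. Then $r_0=(1)(12)(2)(13)(1)$ and $c_0=2$. The only bonds of $r_0$ are $(1,3)$ and $(3,4)$, so column $5$ lies under no bond. In general, deleting column $n+2$ uncovers every column of $(c_0,n+1]$ that was covered only by $(c_0,n+2)$.

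This is not just a base-case problem. For $r=(1)(12)(2)(13)(3)(12)(2)$ one gets $r_1=(1)(12)(2)(13)(2)(12)$ with $c_1=2$. Its bonds are $(1,3)$, $(3,4)$, $(4,5)$, so column $6$ is uncovered at an intermediate stage. Hence no combination of Propositions~\ref{prop:gpreservation}, \ref{prop:gaddedbond} and \ref{prop:bondchangecrit} can establish your invariant. As it stands you have not shown that $g$ stays defined until $c_i=1$, which is the substance of the proposition.

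\textbf{The fix: track one original bond.} What is true, and all Proposition~\ref{prop:gdouble} needs, is that $c_i$ itself is straddled. This comes from following a single bond of the original $r$:
\begin{enumerate}
\item[(a)] By atomicity, pick a bond $(c,d)$ of $r$ with $c<c_i\le d$.
\item[(b)] Since $c<c_i\le c_0$ (Proposition~\ref{prop:gterminate}), $c\ne c_0$. So $d\ne n+2$, and $(c,d)$ is a bond of $r_0$.
\item[(c)] For every $j<i$ we have $c<c_i\le c_{j+1}$, so Proposition~\ref{prop:gpreservation} carries $(c,d)$ from $r_j$ to $r_{j+1}$.
\end{enumerate}
Thus $(c,d)$ is a bond of $r_i$ straddling $c_i$, and Proposition~\ref{prop:gdouble} applies. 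This is the paper's argument. The rest of your outline agrees with it: Steps~1 and~3, and propagating ``$r_i(c_{i-1})$ bonded or $c_{i-1}=c_{i-1}'$'' via Propositions~\ref{prop:gaddedbond} and~\ref{prop:gdouble}.

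\textbf{A smaller correction in the $i=0$ case.} There, $b\neq n+2$ does not follow from Proposition~\ref{prop:naturalmatch}(i), which only forbids crossings between bonds of the same type. Bonds of different types do cross: $(1,3)$ and $(2,6)$ in the first example do, and that is exactly your configuration $e<c_0\le b<n+2$. The right reason is that column $n+2$ has at most one bonded partner $d<n+2$, and that partner is $c_0\neq e$.
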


\begin{proof}
    We construct $r_0$ by removing the leftmost column of $r$. Since $r$ is ballot-admissible, $r_0$ must be ballot-admissible by definition. In addition, we know that $r$ is atomic, so $r$ cannot end on $(1)$ or $(12)$ by definition. This means that the final column of $r$ must be bonded to some unique column $c_0$ in $r$ which then becomes unbonded in $r_0$. Observe that all other bonds of columns of $r$, which are not bonded to the final column, are unaffected by this. 

    In addition, say that $c_0\ne 1$. There exists some column preceding $c_0$ bonded to some column $\ge c_0$. Since all bonds not involving the final column of $r$ are the same in $r_0$, we see that this also holds in $r_0$. Then, if $r_0(c_0)$ is $a$-unbonded, any column $c > c_0$ in $r_0$ is $a$-unbonded iff $r(c)$ is $a$-unbonded. Since $r(c_0)$ is bonded to the final column of $r$, no such $r(c)$ can be $a$-unbonded, so $r_0(c_0)$ is the maximal $a$-unbonded column in $r_0$. From this, by Proposition~\ref{prop:gwelldefined} and Proposition~\ref{prop:gaddedbond}, $(r_1,c_1)=g(r_0,c_0)$ is well-defined, and either $r_1(c_0)$ is bonded or $c_0=c_0'$.

    Next, for some $i > 0$, assume that $(r_i,c_i)$ is well-defined, $r_i(c_{i-1})$ is bonded or $c_{i-1}=c_{i-1}'$, and $c_i\ne 1$. Then, we know that there exists some $c < c_i$ bonded to some column $d\ge c_i$ in $r$, since $r$ is atomic. Furthermore, by the same argument as above, $c$ remains bonded to $d$ in $r_0$. Now, suppose that $c$ is bonded to $d$ in $r_j$ for $0\le j < i$. By Proposition~\ref{prop:gterminate}, as $i\ge j+1$, $c_i \le c_{j+1}$, so as $c < c_i$, $c < c_{j+1}$. Then, as $c < c_{j+1}$ and $c$ is bonded to $d>c$ in $r_j$, $c$ and $d$ are also bonded in $r_{j+1}$ by Proposition~\ref{prop:gpreservation}. As $c$ and $d$ are bonded in $r_0$, it follows by induction that for all $0\le j < i$, $c$ and $d$ are bonded in $r_{j+1}$. So, $c$ and $d$ are bonded in $r_i$. Since $(r_i,c_i)=g(r_{i-1},c_{i-1})$, and $c < c_i$ is bonded in $r_i$ to $d\ge c_i$, by Proposition~\ref{prop:gdouble}, $g(r_i,c_i)=(r_{i+1},c_{i+1})$ is well-defined and either $r_{i+1}(c_i)$ is bonded or $c_i=c_i'$. Note that by Proposition~\ref{prop:gwelldefined}, all $r_i$ are ballot-admissible.
    
    By induction, we now see that until $c_i=1$, $(r_{i+1},c_{i+1})$ is well-defined with $r_{i+1}(c_i)$ being bonded. However, by Proposition~\ref{prop:gterminate}, $c_{i+1} < c_i$ for all $i$, so eventually for some $k$, $g(r_k,c_k)$ must not be well-defined. By our argument above, $c_k=1$. Then, since the initial column of $r_k$ is unbonded, while $r_k$ is ballot-admissible, we may simply remove the initial column of $r_k$ while preserving ballot-admissibility. Thus, we construct a ballot-admissible ribbon $\psi_2(r)$ as desired.
\end{proof}

\begin{lemma}\label{lemma:fdisallowedbonds}
    Define $r_i$, $c_i$, and $c_i'$ as in the construction of $\psi_1$. Let $i$ be arbitrary. For all $c < c_{i+1}$, $c$ cannot be bonded to a column $>c_i'$ in $r_{i+1}$. 
\end{lemma}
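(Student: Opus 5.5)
We argue by induction on $i$, using Propositions~\ref{prop:fuseful} and~\ref{prop:fpreservation} to move bonds between successive ribbons $r_i$ and $r_{i+1}$. Suppose $c < c_{i+1}$ is bonded in $r_{i+1}$ to some $d > c_i'$. By Proposition~\ref{prop:fuseful}, applied to $f(r_i,c_i)=(r_{i+1},c_{i+1})$, we get $c < c_i$. Proposition~\ref{prop:fpreservation} then says that $c$ is bonded in $r_i$ to some column $d' \ge c_i'$. The case $d'=c_i'$ is impossible: $c_i'$ is $i$-unbonded in $r_i$ by definition of $f$, and a column with an unmatched lower entry is not the right endpoint of a bond. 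Hence $d' > c_i'$. Since $c_{i-1}' < c_i'$ by Proposition~\ref{prop:fterminate}, this gives $d' > c_{i-1}'$.

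To close the induction at level $i-1$, we must also check $c < c_i$. This already holds, so the inductive hypothesis applied to $r_i$ forbids $c$ from being bonded to $d' > c_{i-1}'$ in $r_i$. This is a contradiction. The statement of the lemma is therefore exactly the inductive hypothesis, and we induct on $i \ge 0$ with the claim ``no $c<c_{i+1}$ is bonded in $r_{i+1}$ to a column $>c_i'$.''

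For the base case $i=0$, we have $c<c_0=1$. No such column exists, so the claim is vacuous. Alternatively, the argument above produces $c<c_0=1$, which is already a contradiction.

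The step needing the most care is excluding $d'=c_i'$. Here we use that $r_i(c_i')$ is $i$-unbonded: it has an $i$ entry and no $i+1$ entry, so it is not in the domain of any $b_j$ with $b_j(c_i') < c_i'$. We must also confirm that $c$ cannot instead bond to $c_i'$ through a different index $j$. This holds because a column is bonded to at most one earlier column, and only via an $(j+1)$-without-$j$ entry, which $r_i(c_i')$ does not have for $j=3-i$. The single-column-change structure from Proposition~\ref{prop:bondchangecrit} is already packaged into Proposition~\ref{prop:fpreservation}, so it needs no further work.
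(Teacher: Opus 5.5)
Your argument is essentially the paper's. The paper takes a minimal counterexample index $j$, with the condition phrased as $c<c_j$, and applies Proposition~\ref{prop:fpreservation} and then Proposition~\ref{prop:fuseful}. You instead induct on the lemma's own statement, applying Proposition~\ref{prop:fuseful} once to get $c<c_i$, then Propositions~\ref{prop:fpreservation} and~\ref{prop:fterminate}. The two routes are equivalent, and your induction is sound once one step is deleted.

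Your exclusion of $d'=c_i'$ rests on a false claim. Write $a$ for the entry value, to avoid clashing with the step index $i$. An $a$-unbonded column can still be the right endpoint of a bond of the other index $3-a$:
\begin{itemize}
\item a $(13)$ column has a $3$ without a $2$;
\item a $(2)$ column has a $2$ without a $1$.
\end{itemize}
These are exactly the cases where $r_i(c_i')$ \emph{does} have a $(j+1)$-without-$j$ entry for $j=3-a$. For example, in the paper's tableau $(12)(1)(13)$, column $3$ is $1$-unbonded yet bonded to column $1$ via $b_2$.

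You never need the strict inequality, though. Proposition~\ref{prop:fpreservation} gives $d'\ge c_i'$, and $c_i'>c_{i-1}'$ gives $d'>c_{i-1}'$, which is all the inductive hypothesis requires; the paper argues exactly this way.

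Also, in the base case the claim concerns $c<c_1$, so it is not vacuous on its face. Your ``alternatively'' version is the correct one: Proposition~\ref{prop:fuseful} gives $c<c_0=1$, which is impossible.
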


\begin{proof}
    Suppose that for some column $c$, $c < c_{i+1}$ and $c$ is bonded to some column $>c_i'$ in $r_{i+1}$ for some $i$. Then, by Proposition~\ref{prop:fuseful}, $c < c_i$. 
    
    Let $j$ be the minimal value such that $c < c_j$ and $c$ is bonded to some column $>c_j'$ in $r_{j+1}$. It is impossible for any $c$ that $c < c_0$, so $j\ge 1$. Now, by Proposition~\ref{prop:fpreservation}, $c$ is bonded to some column $\ge c_j' > c_{j-1}'$ in $r_j$. In addition, since $c < c_j$ and $c$ is bonded in $r_j$ to some column $>c_{j-1}'$, by Proposition~\ref{prop:fuseful}, $c < c_{j-1}$. As such, $c < c_{j-1}$ and $c$ is bonded to some column $>c_{j-1}'$ in $r_j$. However, by the minimality of $j$, this is impossible. As such, we have a contradiction, so any column $c < c_i$ cannot be bonded to a column $>c_i'$ in $r_{i+1}$ for any $i$.
\end{proof}

\begin{theorem}\label{thm:atomicbijection}
    The maps $\psi_1$ and $\psi_2$ are inverses which induce a bijection between ballot-admissible ribbons of length $n$ and atomic ribbons of length $n+2$ for all $n\ge 1$.
\end{theorem}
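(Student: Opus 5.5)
Both maps are already known to be well defined: $\psi_1$ by the proposition preceding its definition, and $\psi_2$ by Proposition~\ref{prop:psi2welldefined}. So it suffices to show $\psi_2\circ\psi_1=\mathrm{id}$ on ballot-admissible ribbons of length $n$ and $\psi_1\circ\psi_2=\mathrm{id}$ on atomic ribbons of length $n+2$. The plan is to show that each elementary step of one construction is undone by one elementary step of the other, in reverse order. The key tools are Proposition~\ref{prop:ginverse}, which says that $g$ is inverted by $f$ under its hypothesis, and a converse statement saying that $f$ is inverted by $g$, which I will prove first.

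\textbf{Step 1: $g$ undoes $f$.} Let $f(r_0,c_0)=(r_1,c_1)$. I claim $g(r_1,c_1)=(r_0,c_0)$. First identify the column that $g$ modifies, namely the largest column $\ge c_1$ bonded in $r_1$ to a column preceding $c_1$.
\begin{itemize}
\item Proposition~\ref{prop:faddedbond} shows that $c_0$ is bonded to $c_0'$ in $r_1$.
\item Since $c_1\le c_0'$, and $c_0<c_1$ whenever $c_1\neq c_0'$, the column $c_0'$ is a candidate.
\item Lemma~\ref{lemma:fdisallowedbonds} (or Proposition~\ref{prop:fuseful} directly) shows that no column $c<c_1$ is bonded to a column $>c_0'$ in $r_1$. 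Hence $c_0'$ is the largest candidate.
\end{itemize}
Therefore $g$ changes the $4-j$ entry of $c_0'$ back, where $j=3-i$ and $4-j=i+1$; it changes this $i+1$ to $i$, which recovers $r_0$. It remains to check that the column returned by $g$ is $c_0$, i.e.\ that $c_0$ is the largest column other than $c_0'$ that is bonded in $r_1$ and unbonded in $r_0$. Proposition~\ref{prop:faddedbond} says $c_0$ is the only such column, with the roles of $r_0$ and $r_1$ swapped as appropriate. One degenerate case must be handled separately: $c_1=c_0'$ with $c_0'$ left unbonded.

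\textbf{Step 2: $\psi_2\circ\psi_1=\mathrm{id}$.} Write the run of $\psi_1$ as $r_0\to r_1\to\cdots\to r_k$, after which $\psi_1$ appends a final column bonded to $c_k$, as shown in the proof that $\psi_1$ is well defined.
\begin{itemize}
\item $\psi_2$ first deletes that final column, which leaves $r_k$ with starting column $c_k$.
\item By Step 1, applying $g$ repeatedly walks back through $(r_{k-1},c_{k-1}),\dots,(r_0,c_0)$.
\item Here $c_0=1$ is the rightmost column, so $\psi_2$ stops at this point and deletes the added $(12)$, returning $r$.
\end{itemize}
One point needs care: $\psi_2$ must not stop earlier, i.e.\ no intermediate $c_i$ may equal $1$. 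This holds because $c_i>c_0=1$ for $i\ge1$. That inequality follows from Proposition~\ref{prop:fterminate} together with $c_{i}\ge$ something $>c_{i-1}$, or more simply from the fact that columns before $c_0$ do not exist.

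\textbf{Step 3: $\psi_1\circ\psi_2=\mathrm{id}$.} Write the run of $\psi_2$ as $r_0\to\cdots\to r_k$ with $c_k=1$. The proof of Proposition~\ref{prop:psi2welldefined} shows that at every step either $r_{i+1}(c_i)$ is bonded or $c_i=c_i'$. So Proposition~\ref{prop:ginverse} gives $f(r_{i+1},c_{i+1})=(r_i,c_i)$.
\begin{itemize}
\item $\psi_1$ re-adds the column $(12)$ (or its dual, depending on parity) at position $1$. I need to check that this reproduces $r_k$; I expect this to follow because column $1$ of $r_k$ is unbonded with the appropriate entry.
\item The forward $f$-iteration then retraces $(r_k,c_k)\to\cdots\to(r_0,c_0)$.
\item At $(r_0,c_0)$, $f$ is undefined, because $c_0$ is the maximal unbonded column of its type, as shown in the proof of Proposition~\ref{prop:psi2welldefined}.
\item $\psi_1$ then appends a column with an $a+1$ entry and no $a$ entry. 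This is exactly the removed final column of $r$: that column was bonded to $c_0$, so its entry type is forced, and the shape parity forces the column itself.
\end{itemize}

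\textbf{Main obstacle.} The hardest part is Step 1, specifically proving that $c_0'$ is exactly the maximal column that $g$ selects and that $g$'s returned column is exactly $c_0$. This needs careful use of the noncrossing structure from Proposition~\ref{prop:naturalmatch} and of Lemma~\ref{lemma:fdisallowedbonds}. There is a catch: Lemma~\ref{lemma:fdisallowedbonds} is stated only inside the $\psi_1$ iteration, which is precisely the context where Step 1 is needed. I would therefore prove Step 1 inside that context rather than for a general $f$.
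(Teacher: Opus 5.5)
Your overall structure matches the paper. You get $\psi_1\circ\psi_2=\mathrm{id}$ from Proposition~\ref{prop:ginverse} along the $\psi_2$ run, and your Step 3 is fine. You get $\psi_2\circ\psi_1=\mathrm{id}$ by checking $g(r_{i+1},c_{i+1})=(r_i,c_i)$ along the $\psi_1$ run: Lemma~\ref{lemma:fdisallowedbonds} makes $c_i'$ the column $g$ modifies, and Proposition~\ref{prop:faddedbond} makes $c_i$ the column $g$ returns. The gap is in Step 1, where you assert without proof that $c_0<c_1$ whenever $c_1\neq c_0'$. This inequality is what makes $c_0'$ a candidate for $g$ at all, since that column must be bonded to a column \emph{preceding} $c_1$.

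The inequality is false for a general application of $f$. Take the ballot-admissible ribbon $r_0=(1)(12)(1)(12)(2)(23)$, whose $1$-bonds are $1\leftrightarrow 6$ and $3\leftrightarrow 5$, with $c_0=2$, which is $2$-unbonded. Then $c_0'=4\neq c_0+1$, so $f$ is well defined, and $r_1=(1)(12)(1)(13)(2)(23)$. In $r_1$ the $1$ of column $4$ bonds to column $5$ and column $3$ rebonds to column $6$. Column $1$ is left unbonded, so $c_1=1<c_0$, and $g(r_1,c_1)$ is not even defined. So the obstruction to Step 1 in general is this inequality, not only the fact that Lemma~\ref{lemma:fdisallowedbonds} lives in the $\psi_1$ context. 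For the same reason, Proposition~\ref{prop:fuseful} cannot replace the lemma: it only yields $c<c_0$, which is a contradiction only when $c_0=1$.

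Inside the $\psi_1$ run, $c_i<c_{i+1}$ does hold, but it needs its own argument; this is the first half of the paper's proof.
\begin{enumerate}
\item First, $c_{i+1}\neq c_i$, because $c_i$ is bonded in $r_{i+1}$ while $c_{i+1}$ is not.
\item Suppose $c_{i+1}<c_i$; since $c_0=1$, necessarily $i\ge 1$. Then $c_{i+1}<c_i'$, and $c_{i+1}$ is bonded in $r_i$ but unbonded in $r_{i+1}$.
\item By Proposition~\ref{prop:bondchangecrit}, its partner $d$ in $r_i$ satisfies $d\ge c_i'$. By Proposition~\ref{prop:fterminate}, $c_i'>c_{i-1}'$.
\item Hence $c_{i+1}<c_i$ is bonded in $r_i$ to a column beyond $c_{i-1}'$, contradicting Lemma~\ref{lemma:fdisallowedbonds} at index $i-1$.
\end{enumerate}
With this inserted, the rest of your Step 1 goes through inside the $\psi_1$ run. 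It also gives $c_i\ge 2$ for all $i\ge 1$, which settles the ``$\psi_2$ must not stop early'' point in Step 2 more cleanly than your current justification.
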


\begin{proof}
    First, we define $r_i$ and $c_i$ as in the construction of $\psi_2$. As we showed in Proposition~\ref{prop:psi2welldefined}, for all $i\ge 1$, given $(r_i,c_i)=g(r_{i-1},c_{i-1})$, $r_i(c_{i-1})$ is bonded or $c_{i-1}=c_{i-1}'$. By Proposition~\ref{prop:ginverse}, it follows that $f(r_i,c_i)=(r_{i-1},c_{i-1})$ for all $i\ge 1$. From this, it immediately follows that $\psi_1\circ\psi_2=\mathrm{id}$.

    Next, we show that $\psi_2\circ\psi_1=\mathrm{id}$. For the remainder of the proof, we use $r_i$, $c_i$, and $c_i'$ as defined in the construction of $\psi_1$. It is sufficient to show that for all $i\ge 0$, $g(r_{i+1},c_{i+1})=(r_i,c_i)$.

    First, suppose for the sake of contradiction that for some $i$, $c_{i+1} \le c_i$. We know that in $r_{i+1}$, $c_{i+1}$ is unbonded while $c_i$ is bonded, so $c_{i+1}\ne c_i$. As such, $c_{i+1} < c_i$. Then, let $c_{i+1}$ be bonded to $d$ in $r_i$. Since $c_{i+1}$ is bonded to $d$ in $r_i$ but is unbonded in $r_{i+1}$, we see that $d \ge c_i' > c_{i-1}'$ by Proposition~\ref{prop:bondchangecrit}. Then, we have that $c_{i+1} < c_i$ is bonded to some column $d > c_{i-1}'$ in $r_i$. However, by Lemma~\ref{lemma:fdisallowedbonds}, this is impossible, so we have a contradiction. From this, we conclude that for all $i$, $c_i < c_{i+1}$.

    Now, since $f(r_i,c_i)=(r_{i+1},c_{i+1})$, we know by Proposition~\ref{prop:faddedbond} that $c_i < c_{i+1}$ is bonded to $c_i'$ where $c_i'\ge c_{i+1}$ in $r_{i+1}$. To show that $g(r_{i+1},c_{i+1})=(r_i,c_i)$, it is sufficient to demonstrate that $c_i'$ is the largest column $\ge c_{i+1}$ bonded to some column preceding $c_{i+1}$ in $r_{i+1}$. In fact, this is exactly the statement of Lemma~\ref{lemma:fdisallowedbonds}. As such, we may conclude that $g(r_{i+1},c_{i+1})=(r_i,c_i)$. We have now shown that $\psi_1$ and $\psi_2$ are inverses and conclude that they induce a bijection.
\end{proof}

\begin{corollary}\label{cor:equinumerous}
    For all $n\ge 0$, ballot-admimssible ribbons of length $n$ are equinumerous with peakless Motzkin paths of length $n$.
\end{corollary}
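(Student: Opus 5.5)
Let $t_n$ denote the number of ballot-admissible ribbons of length $n$, with $t_0=1$ (the empty ribbon). The plan is to show that $t_n$ satisfies the same recurrence as $a_n$ in Proposition~\ref{prop:motzkinrecurrence}, namely $t_{n+1}=t_n+\sum_{j=1}^{n-1} t_j t_{n-1-j}$, and to check initial values. Since the recurrence determines the sequence from $a_0$ (and $a_1=1$ follows from it), induction on $n$ then gives $t_n=a_n$ for all $n\ge 0$. The base cases are direct: for $n=1$ the only ballot-admissible ribbon is $(1)$, so $t_1=1=a_1$; for $n=2$ one checks by hand that $t_2=1=a_2$, the only one being $(12)(1)$ read right to left.

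For the recurrence, I would decompose a ballot-admissible ribbon $r$ of length $n+1$ by its right-to-left structure. Let $m\ge 1$ be the largest index such that $r([1,m])$ is atomic, or equivalently the largest $m$ such that $r([m+1,n+1])$ is ballot-admissible. More naturally, take the \emph{last} atomic factor: let $s\ge 2$ be the largest column such that $r([s,n+1])$ is ballot-admissible, if any. Then $r([s,n+1])$ is atomic by maximality and the definition of atomic. By Proposition~\ref{prop:atomiccrit}, the bonds of $r$ restricted to $[1,s-1]$ make $r([1,s-1])$ ballot-admissible as a prefix. If no such $s$ exists, then $r$ itself is atomic.

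I then split by the length $\ell$ of this final atomic block. The first case is $\ell=1$: atomic of length 1 means the column is unbonded, which must be $(1)$ or $(12)$ by parity. This should give $t_n$ choices, matching the term $a_n$. The second case is $\ell=j+2\ge 3$, where Theorem~\ref{thm:atomicbijection} gives $t_j$ atomic blocks of that length, while the prefix of length $n-1-j$ contributes $t_{n-1-j}$. Atomic blocks of length $2$ should not occur: such a block would be $(12)(3)$ or $(1)(23)$ type, which is forbidden by Proposition~\ref{prop:ribboncondition}. This matches the absence of a $j=0$ term.

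The main obstacle is that, because of the alternating shape and the duality, I must check two things. First, concatenation is well-defined: gluing a ballot-admissible ribbon (or its dual, depending on parity, using the remark identifying $r$ with $r^*$) with an atomic one gives a valid semistandard ribbon, with no forbidden $123$ pattern across the junction. Second, the decomposition is bijective. For concatenation, I expect to use that an atomic block begins (at its rightmost column) with an unbonded column $(1)$ or $(12)$, and that a ballot-admissible ribbon cannot end in a way that creates $(1)(23)$ or $(12)(3)$ with it. If a bad junction can occur, I would instead apply duality to the appropriate factor, so that the counts are unaffected. Ballot-admissibility of the concatenation follows since bonds within each factor are determined locally, as in the proof of Proposition~\ref{prop:atomiccrit}.
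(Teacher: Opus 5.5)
Your proposal is correct and follows essentially the same route as the paper: you split off the maximal ballot-admissible suffix $r([c,n])$, which is necessarily atomic, count atomic blocks of length $j+2$ via Theorem~\ref{thm:atomicbijection}, rule out length $2$, and match the recurrence of Proposition~\ref{prop:motzkinrecurrence}. The junction issue you flag, which the paper dismisses as ``easy to see'', resolves as you expect: the atomic block's rightmost column is $(1)$ or $(12)$, so no $(12)(3)$ or $(1)(23)$ pattern can straddle the junction, and the ribbon/dual-ribbon parity of the block is absorbed by $r\mapsto r^*$.
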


\begin{proof}
    We show this by induction. For $n=0$, we say that there is a single empty ribbon and a single peakless Motzkin path with no steps. These are evidently equinumerous.
    
    Next, suppose that ballot-admissible ribbons of length $m$ are equinumerous with peakless Motzkin paths of length $m$ for all $0\le m < n$. Then, we will show that the same holds for $n$.

    For any ballot-admissible ribbon $r$ of length $n$, there exists some maximal column $c$ of $r$ such that $r'=r([c,n])$ is ballot-admissible. We will count ballot-admissible ribbons where $r'$ has length $j$, which we note is uniquely defined for each ribbon $r$. We note that for ribbons of length $n$, $1\le j \le n$. In addition, observe that $r'$ must be atomic, while no atomic ribbons of length 2 exist. It follows that $j\ne 2$. It is easy to see that choosing some ribbon $r$ for some fixed value of $j$ is equivalent to the following two chioces. First, we choose some arbitrary ballot-admissible ribbon of length $n-j$ to construct $r([1,c-1])$. Then, we choose some atomic ribbon of length $j$ for $r'$.

    Since ballot-admissible ribbons of length $n-j$ are equinumerous with peakless Motzkin paths of the same length, we have $a_{n-j}$ options for the first choice. Meanwhile, by Theorem~\ref{thm:atomicbijection}, atomic ribbons of length $j$ are equinumerous with ballot-admissible ribbons of length $j-2$, which are counted by $a_{j-2}$ for $3\le j\le n$. In total, this gives us $a_{n-j}*a_{j-2}$ choices for each $j$. Note that if $j=1$, we have exactly one atomic ribbon of length 1, so we have $a_{n-1}$ options. This gives us 
    \[a_{n-1} + \sum_{j=3}^{n} a_{j-2}*a_{n-j} = a_{n-1} + \sum_{j=1}^{n-2} a_j*a_{n-j-2}\]

    total ballot-admissible ribbons of length $n$. By Proposition~\ref{prop:motzkinrecurrence}, we have $a_n$ ballot-admissible ribbons of length $n$. So, ballot-admissible ribbons and peakless Motzkin paths of length $n$ are equinumerous.
\end{proof}

\begin{definition}
    Let $r$ be a ballot-admissible ribbon. We define a peakless Motzkin path $\phi(r)$ by mapping $r(i)$ to the $i$th step of $\phi(r)$ as follows:
    \begin{enumerate}
        \item[(i)] If $r(i)$ is a bonded $(12)$ or $(1)$ then the $i$th step of $\phi(r)$ is $N$.
        \item[(ii)] If $r(i)$ is an unbonded $(12)$ or $(1)$, or a bonded $(13)$ or $(2)$, then the $i$th step is $E$.
        \item[(iii)]  If $r(i)$ is an unbonded $(13)$ or $(2)$, or $(23)$ or $(3)$, then the $i$th step is $S$.
    \end{enumerate}

    Furthermore, let $\phi_n$ be $\phi$ restricted to ribbons of length $n$.
\end{definition}

\begin{lemma}\label{lemma:bijectionpreliminary}
    Let $r$ be a ballot-admissible ribbon. Then, $\phi(\psi_1(r))=N\phi(r) S$.
\end{lemma}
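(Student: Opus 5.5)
The plan is to follow the construction of $\psi_1$ step by step and keep an invariant on the word $\phi(r_i)$. By the definition, the step $\phi$ assigns to a column depends only on two things: its entry type, and whether it is bonded as the \emph{opening} end of a bond, i.e.\ to a later column. The steps of the proof are as follows.

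\textbf{Start.} Write $r_0$ for $r$ with the column $(12)$ adjoined as the new first column, and $c_0=1$. Every bond of $r$ is determined by counts over intervals $[c,d]$ lying inside $r$. So the bonds of $r$ are unchanged in $r_0$, and $r_0(c_0)$ is unbonded. Hence $\phi(r_0)=E\,\phi(r)$.

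\textbf{Invariant.} I will prove that for each $i$, $\phi(r_i)$ agrees with the word $N\phi(r)$ in every position except $c_i$, where it is one level too low. Concretely:
\begin{itemize}
\item if $c_i$ is $1$-unbonded, its entry is $(1)$ or $(12)$, so it gives $E$ where $N$ is wanted;
\item if $c_i$ is $2$-unbonded, its entry is $(2)$ or $(13)$, so it gives $S$ where $E$ is wanted.
\end{itemize}
The case $i=0$ is exactly the start above.

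\textbf{Inductive step.} Suppose $f(r_i,c_i)=(r_{i+1},c_{i+1})$. I first pin down which columns change status, using Propositions~\ref{prop:fremovedbond} and~\ref{prop:faddedbond}.
\begin{itemize}
\item $c_i$ becomes bonded (to $c_i'$). Its step rises one level, which repairs the defect at $c_i$.
\item Among columns other than $c_i'$, only $c_{i+1}$ can pass from bonded to unbonded. No other column passes from unbonded to bonded.
\end{itemize}
Now split on the type of $c_i$.
\begin{itemize}
\item \emph{Case $c_i$ is $1$-unbonded.} Then $c_i'$ changes from an unbonded $(1)$ or $(12)$, which gives $E$, to $(2)$ or $(13)$.
  \begin{itemize}
  \item If $c_i'$ is still unbonded, then $c_{i+1}=c_i'$. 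It gives $S$ where $E$ is wanted, which is exactly the defect form for a $2$-unbonded column.
  \item If $c_i'$ is bonded, it still gives $E$. Then $c_{i+1}$ is a column that changed from bonded to unbonded and is $2$-unbonded (as in the proof of Proposition~\ref{prop:fremovedbond}). So it goes from a bonded $(2)$ or $(13)$, giving $E$, to an unbonded one, giving $S$. Again this is the defect form.
  \end{itemize}
\item \emph{Case $c_i$ is $2$-unbonded.} Then $c_i'$ changes from an unbonded $(2)$ or $(13)$ to $(3)$ or $(23)$. Both give $S$, so the step at $c_i'$ is unchanged. The new column $c_{i+1}$ is $1$-unbonded and was bonded. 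It goes from giving $N$ to giving $E$, which is the defect form.
\end{itemize}
This completes the induction.

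\textbf{Termination.} When $f(r_k,c_k)$ is undefined, a final column is added. As shown in the proof that $\psi_1$ is well defined, this column is bonded only to $c_k$, and all other bonds of $r_k$ are unchanged.
\begin{itemize}
\item Bonding $c_k$ repairs the last defect.
\item The added column is $(2)$, $(23)$, $(3)$ or $(13)$. Nothing follows it, so it is unbonded as an opener. Under the definition of $\phi$, each of these four columns gives $S$ in that situation.
\end{itemize}
Therefore $\phi(\psi_1(r))=N\phi(r)S$.

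The main obstacle I expect is the status-change claim in the inductive step. I must make sure that no column other than $c_i$, $c_i'$ and $c_{i+1}$ changes its opener-bonded status. In particular, the $2$-unbonded case needs care. There $c_i'$ becomes $(3)$ or $(23)$, which has no opener role at all, so the definition of $f$ selects $c_{i+1}$ as the column that lost its bond. I would check that this column is $1$-typed and that it is the unique such column, combining Proposition~\ref{prop:fremovedbond} with Proposition~\ref{prop:bondchangecrit}.
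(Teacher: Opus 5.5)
Your invariant is the paper's telescoping argument reorganized: only $c_i$, $c_i'$ and $c_{i+1}$ change, $c_i$ rises one level and $c_{i+1}$ drops one. Your start and termination steps are fine. The inductive case analysis, however, rests on a wrong identification. A column is $a$-unbonded when it has an $a$ entry and no $a+1$ entry. So the $1$-openers are $(1)$ and $(13)$, and the $2$-openers are $(12)$ and $(2)$. The groups $\{(1),(12)\}$ and $\{(2),(13)\}$ used by $\phi$ are dual pairs under $r\mapsto r^*$, not index classes. Since $c_i'$ has the same index $a$ as $c_i$ and $c_{i+1}$ has index $3-a$, the $\phi$-group of $c_i$ determines neither the group of $c_i'$ nor that of $c_{i+1}$. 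Your type-specific claims therefore fail.
\begin{itemize}
\item In the paper's own example $\psi_1((1)(12)(2))$, $c_0$ is a $(12)$ and $c_0'=3$ goes $(12)\to(13)$ and ends bonded. Then $c_1=2$ is a $(1)$ going from $N$ to $E$ (compare $ENES\to NEES$). It is not a $(2)$ or $(13)$ going from $E$ to $S$, as your first case asserts.
\item For $r=(1)(12)(1)(13)(2)$, $c_0=1$ is a $(12)$. Column $3$ is $2$-bonded to column $5$, so $c_0'=6$ is the $(2)$, which becomes $(3)$. Then $c_1=5$ is the $(13)$, going from $E$ to $S$. Neither of your two cases describes this step.
\end{itemize}
The check you plan in your last paragraph, that the column losing its bond is ``$1$-typed'', fails for the same reason.

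The repair is what the paper does. Split on the group of $c_i'$ itself, not of $c_i$:
\begin{itemize}
\item $\{(1),(12)\}\to\{(2),(13)\}$ gives $E\to E$ if $c_i'$ ends bonded, and $E\to S$ with $c_{i+1}=c_i'$ otherwise;
\item $\{(2),(13)\}\to\{(3),(23)\}$ gives $S\to S$.
\end{itemize}
Then observe that an opener of either group drops exactly one level ($N\to E$ or $E\to S$) when it loses its bond. So when $c_{i+1}\neq c_i'$, it acquires a one-level defect whatever its entries. If you state the invariant only as ``one level too low at $c_i$'' and drop the type predictions, the rest of your argument goes through, using Propositions~\ref{prop:fremovedbond} and~\ref{prop:faddedbond} for the status changes.
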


\begin{proof}
    We define $r_i$, $c_i$ and $c_i'$ as in the construction of $\psi_1$.
    
    First, we construct $r_0$ by adding $(12)$ to the beginning of $r$. This does not change the bonds in the columns of $r$, but adds an initial unbonded column to the beginning of $r$. As such, $\phi(r_0)=E\phi(r)$. 
    
    Next, consider that $(r_{i+1},c_{i+1})=f(r_i,c_i)$. We know that only the entries of $c_i'$ change in this step. Also, by Proposition~\ref{prop:fremovedbond} and Proposition~\ref{prop:faddedbond}, the only columns which go from bonded to unbonded or vice versa are $c_i$, $c_i'$, and $c_{i+1}$. As such, in all but the steps corresponding to these columns, $\phi(r_i)$ and $\phi(r_{i+1})$ agree. Since $c_i$ goes from unbonded in $r_i$ to bonded in $r_{i+1}$, the $c_i$-th step goes from $E$ or $S$ to $N$ or $E$. 

    Now, consider the case where $r_i(c_i')$ is an unbonded $(1)$ or $(12)$ while $r_{i+1}(c_i')$ is a bonded $(2)$ or $(13)$. Here, we see that the $c_i'$-th step is $E$ in both $\phi(r_i)$ and $\phi(r_{i+1})$. Meanwhile, we see that $c_{i+1} < c_i'$ is the unique column which goes from bonded in $r_i$ to unbonded in $r_{i+1}$. As such, the $c_{i+1}$-th step is $N$ or $E$ in $\phi(r_i)$ but $E$ or $S$ in $\phi(r_{i+1})$. The same holds in the case where $r_i(c_i')$ is an unbonded $(2)$ or $(13)$ and $r_{i+1}(c_i')$ is $(3)$ or $(23)$.
    Finally, consider the case where $r_i(c_i')$ is an unbonded $(1)$ or $(12)$ and $r_{i+1}(c_i')$ is an unbonded $(2)$ or $(13)$. Then, we see that the $c_i'$-th step goes from $E$ in $\phi(r_i)$ to $S$ in $\phi(r_{i+1})$. In addition, by definition of $f$, $c_{i+1}=c_i'$.

    In all cases, we conclude that exactly the steps corresponding to $c_i$ and $c_{i+1}$ change. We have $c_i$ going from $E$ or $S$ to $N$ or $E$, and the opposite for $c_{i+1}$. Let $c_j$ be the final $c_i$. We see that for $1\le i < j$, this process alters the step corresponding to $c_i$ exactly twice. First, the $c_i$-th step goes from $N$ or $E$ in $\phi(r_{i-1})$ to $E$ or $S$ in $\phi(r_i)$. Then, the same step goes from $E$ or $S$ in $\phi(r_i)$ to $N$ or $E$ in $\phi(r_{i+1})$. In this way, from $r_0$ to $r_j$, the $c_i$-th step for $1\le i < j$ goes not change overall. Meanwhile, the $c_0$-th step changes from $E$ to $N$, and the $c_j$-th step goes from $N$ or $E$ to $E$ or $S$. Finally, to construct $\psi_1(r)$, we add a new leftmost column which bonds to $c_j$ and does not alter any other bonds. In this way, the $c_j$-th step goes from $E$ or $S$ in $r_j$ to $N$ or $E$ in $\psi_1(r)$ and overall is not altered. Furthermore, our final column in $\psi_1(r)$ is either an unbonded $(2)$ or $(13)$, or it is $(23)$ or $(3)$. As such, we conclude that $\phi(\psi_1(r))=N\phi(r)S$ as desired.
\end{proof}

\begin{proposition}\label{prop:phiwelldefined}
    Let $r$ be a ballot-admissible ribbon of length $n$. Then, $\phi(r)$ is a well-defined peakless Motzkin path of length $n$.
\end{proposition}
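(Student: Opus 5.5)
We argue by strong induction on $n$, following the decomposition used in the proof of Corollary~\ref{cor:equinumerous}. For $n=0$ the empty ribbon maps to the empty path. For $n\ge 1$, let $r$ be ballot-admissible of length $n$ and let $c$ be the maximal column such that $r'=r([c,n])$ is ballot-admissible. Then $r'$ is atomic and $r([1,c-1])$ is ballot-admissible. The plan is to show three things: $\phi(r)=\phi(r([1,c-1]))\,\phi(r')$, $\phi(r')$ is a peakless Motzkin path of length $n-c+1$, and concatenating two peakless Motzkin paths gives a peakless Motzkin path. The last fact is immediate, since both pieces start and end at height $0$, and a concatenation $\cdots N \mid S\cdots$ cannot arise because no peakless Motzkin path ends in $N$.

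For the splitting $\phi(r)=\phi(r([1,c-1]))\,\phi(r')$, we need that every column of $r([1,c-1])$ and of $r'$ has the same bonded or unbonded status, and the same entries, as it does in $r$. We use the argument in the proof of Proposition~\ref{prop:atomiccrit}. Since $r'$ is ballot-admissible, every column $d\ge c$ with an $i+1$ entry and no $i$ entry is bonded in $r$ to some $e\ge c$. Hence no bond in $r$ crosses between $[1,c-1]$ and $[c,n]$. Bonds inside $[1,c-1]$ depend only on those columns, by Proposition~\ref{prop:bondentryamounts}. Bonds inside $[c,n]$ also agree, since $b_i$ depends only on the segment $[b_i(d),d]$. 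It follows that the bonded/unbonded status of each column, which is all that $\phi$ reads, is preserved in both pieces. Columns of the form $(j)$ versus $(\cdot\,\cdot)$ coincide under the identification $r\leftrightarrow r^*$, because $\phi$ treats $(12)/(1)$, $(13)/(2)$, and $(23)/(3)$ identically. So the lengths $n-c+1<n$ and $c-1<n$ let us apply the induction hypothesis to $r([1,c-1])$, provided $c\ge 2$.

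For the atomic piece $r'$ of length $j$, there are two cases. If $j=1$, the only atomic ribbon is the single column $(1)$ (or its dual). It is unbonded, so $\phi(r')=E$. If $j\ge 3$, Theorem~\ref{thm:atomicbijection} gives $r'=\psi_1(\psi_2(r'))$ with $\psi_2(r')$ ballot-admissible of length $j-2<n$. Lemma~\ref{lemma:bijectionpreliminary} then gives $\phi(r')=N\,\phi(\psi_2(r'))\,S$. By induction, $\phi(\psi_2(r'))$ is a peakless Motzkin path of length $j-2\ge 1$, so it is nonempty. Hence $N p S$ stays at height $\ge 0$, returns to $0$, and contains no $NS$ factor. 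The only new adjacencies are the initial $N$ followed by the first step of $p$, which is not $S$ since $p$ starts at height $0$, and the last step of $p$ followed by the final $S$, which is not $N$ since $p$ ends at height $0$.

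The main obstacle is the localization claim, namely that restricting $r$ to $[1,c-1]$ or $[c,n]$ does not change which columns $\phi$ sees as bonded. Its most delicate part is the case where a column of $r([1,c-1])$ is unbonded in $r$, because its potential partner would lie in $[c,n]$. We must rule out such a crossing bond. Corollary~\ref{cor:bondentryamounts} does this: a crossing bond would force the left endpoint's partner to be a column $d\ge c$ that is bonded to some $e<c$, contradicting the ballot-admissibility of $r'$ as characterized in Proposition~\ref{prop:atomiccrit}.
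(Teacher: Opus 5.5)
Your proof is correct and follows essentially the same route as the paper. You use strong induction, split $r$ into the ballot-admissible prefix $r([1,c-1])$ and an atomic suffix $r'$, and handle the atomic piece via Theorem~\ref{thm:atomicbijection} and Lemma~\ref{lemma:bijectionpreliminary} to get $\phi(r')=N\,\phi(\psi_2(r'))\,S$. The differences are cosmetic: you always split (treating the atomic case as $c=1$) instead of case-splitting on atomicity, and you spell out the no-crossing-bonds argument behind $\phi(r)=\phi(r([1,c-1]))\,\phi(r')$, which the paper only calls ``easy to see.''
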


\begin{proof}
    We will show this by induction on $n$. Observe that $\phi$ maps $r=(1)$ to $\phi(r)=E$, which is the unique peakless Motzkin path of length 1. As such, this holds for $n=1$.

    Now, suppose that for all $1\le m < n$, $\phi_m$ is a map from ballot-admissible ribbons to peakless Motzkin paths of length $m$. We will show that $\phi_n$ is consequently also a map from ballot-admissible ribbons to peakless Motzkin paths. Note that by definition of $\phi$, it is easy to see that $\phi$ maps ballot-admissible ribbons to lattice walks generated by $N$, $E$, and $S$.

    Let $r$ be some arbitrary ballot-admissible ribbon of length $n$. Then, first consider the case where $r$ is not atomic. We know that $r=r_1 r_2$ for some unique ballot-admissible $r_1$ and atomic $r_2$. In addition, since $r$ is not atomic, both $r_1$ and $r_2$ have lengths $ < n$. Then, it is easy to see that $\phi(r)=\phi(r_1)\phi(r_2)$. By our assumption, since $r_1$ and $r_2$ have lengths $<n$, $\phi(r_1),\phi(r_2)$ are peakless Motzkin paths. Then, they cannot begin on $S$ or end on $N$, since they must begin and end at $y=0$ without going below $y=0$. It is now easy to conclude that $\phi(r)=\phi(r_1)\phi(r_2)$ has no $NS$ patterns and is a peakless Motzkin path.

    Meanwhile, consider the case where $r$ is atomic. We assume that $1 < n$, so as there are no atomic ribbons of length 2, $n\ge 3$. Then, by Lemma~\ref{lemma:bijectionpreliminary}, $\phi(r)=N\phi(\psi_2(r))S$. We note that $\psi_2(r)$ is a ballot-admissible ribbon of length $n-2 < n$, so $\phi(\psi_2(r))$ must be a peakless Motzkin path. It then immediately follows that $\phi(r)=N\phi(\psi_2(r))S$ is also a peakless Motzkin path.

    As such, in all cases, we conclude that $\phi_n(r)$ is a peakless Motzkin path as desired. Note that by definition of $\phi$, it is immediately obvious that $\phi_n(r)$ has length $n$.
\end{proof}

\begin{theorem}
    The map $\phi$ induces a bijection from ballot-admissible ribbons of length $n$ to peakless Motzkin paths of length $n$ for all $n\ge 1$.
\end{theorem}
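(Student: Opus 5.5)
By Proposition~\ref{prop:phiwelldefined}, $\phi_n$ already maps ballot-admissible ribbons of length $n$ into peakless Motzkin paths of length $n$. By Corollary~\ref{cor:equinumerous}, the two sets have the same finite size $a_n$. So it suffices to prove that $\phi_n$ is injective, or alternatively surjective. I will prove injectivity by strong induction on $n$, using the same decomposition that appears in the proofs of Corollary~\ref{cor:equinumerous} and Proposition~\ref{prop:phiwelldefined}. The base case $n=1$ is immediate, since there is one ribbon and one path.

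For the inductive step, write each ballot-admissible ribbon uniquely as $r=r_1r_2$. Here $r_2=r([c,n])$ is the atomic suffix, taken at the maximal column $c$ for which $r([c,n])$ is ballot-admissible, and $r_1$ is ballot-admissible, possibly empty. As noted in the proof of Proposition~\ref{prop:phiwelldefined}, $\phi(r)=\phi(r_1)\phi(r_2)$.

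The key claim is that the length of $r_2$ can be read off from the path $\phi(r)$. It equals the length of the final \emph{primitive factor} of $\phi(r)$, meaning the portion after the last return to height $0$ strictly before the end. To justify this, I will show that $\phi(r_2)$ never returns to height $0$ before its end.
\begin{enumerate}
\item[(a)] If $r_2$ has length $1$, it is a single $(1)$ or $(12)$ column, so $\phi(r_2)=E$.
\item[(b)] If $r_2$ has length $j\ge 3$, then by Theorem~\ref{thm:atomicbijection} we have $r_2=\psi_1(\psi_2(r_2))$.
\item[(c)] In case (b), Lemma~\ref{lemma:bijectionpreliminary} gives $\phi(r_2)=N\phi(\psi_2(r_2))S$, and $\phi(\psi_2(r_2))$ is a peakless Motzkin path. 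Hence $\phi(r_2)$ stays at height $\ge 1$ strictly inside, and the split point is determined by $\phi(r)$.
\end{enumerate}

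Now suppose $\phi(r)=\phi(s)$, and decompose both as $r=r_1r_2$ and $s=s_1s_2$. By the key claim, $r_2$ and $s_2$ have the same length $j$, and
\[\phi(r_1)=\phi(s_1), \qquad \phi(r_2)=\phi(s_2).\]
\begin{enumerate}
\item[(a)] If $r$ is not atomic, both pieces have length less than $n$, and induction gives $r_1=s_1$ and $r_2=s_2$.
\item[(b)] If $r$ is atomic, then $j=n$. If $n=1$ we are in the base case. Otherwise $n\ge3$, and Lemma~\ref{lemma:bijectionpreliminary} gives $N\phi(\psi_2(r))S=N\phi(\psi_2(s))S$. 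Hence $\phi(\psi_2(r))=\phi(\psi_2(s))$, and induction at length $n-2$ gives $\psi_2(r)=\psi_2(s)$. Applying $\psi_1$ and Theorem~\ref{thm:atomicbijection} yields $r=s$.
\end{enumerate}

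The main obstacle I expect is making the decomposition rigorous. I need to confirm two things: that the split $r=r_1r_2$ is compatible with $\phi$, meaning bonds do not cross the split so bonded/unbonded status is the same computed in $r$ or in the pieces; and that columns are read in the correct direction when concatenating paths. This is asserted as ``easy'' in Proposition~\ref{prop:phiwelldefined}. If necessary I would verify it via Proposition~\ref{prop:atomiccrit} together with the proof of that proposition, which shows that bonds of $r$ restricted to a ballot-admissible suffix agree with the bonds of the suffix.

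Alternatively, surjectivity could be shown by the same induction, building a preimage via $\psi_1$ from the path decomposition of Proposition~\ref{prop:motzkinrecurrence}. Either direction, combined with Corollary~\ref{cor:equinumerous}, finishes the proof.
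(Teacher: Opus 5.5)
Your proposal is correct and follows essentially the same route as the paper. You reduce to injectivity via Proposition~\ref{prop:phiwelldefined} and Corollary~\ref{cor:equinumerous}, then induct on the split of $r$ into a ballot-admissible prefix and an atomic suffix, using Lemma~\ref{lemma:bijectionpreliminary} and the invertibility of $\psi_2$. Your ``final primitive factor'' is exactly the paper's final $E$ or $Np_2S$ piece, so casing on atomic versus non-atomic instead of on whether the last step is $E$ or $S$ is only a cosmetic reorganization. Your plan to justify $\phi(r)=\phi(r_1)\phi(r_2)$ via the proof of Proposition~\ref{prop:atomiccrit} fills in a step the paper leaves implicit.
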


\begin{proof}
    By Proposition~\ref{prop:phiwelldefined}, we know that $\phi$ is a map from ballot-admissible ribbons of length $n$ to peakless Motzkin paths of length $n$. Moreover, by Corollary~\ref{cor:equinumerous}, we know that ballot-admissible ribbons of length $n$ are equinumerous with peakless Motzkin paths of length $n$. As such, to show that $\phi_n$ is a bijection, it is sufficient to show that $\phi_n$ is injective.

    We will show this by induction. For $n=1$, it is immediately obvious that $\phi_1$ is bijective. Then, suppose that for $1\le m < n$, all $\phi_m$ are bijective. 
    
    Let $r$ be some ballot-admissible ribbon of length $n$. Then, there exists some unique maximal column $c$ such that the ribbon $r'=r([c,n])$ is ballot-admissible. We say that $r=r'' r'$, where $r''=r([1,c-1])$. It is easy to see that $r'$ is atomic and that $r''$ is ballot-admissible. There are no atomic ribbons of length 2, so either $r'$ is of length 1 or $r'$ is of length $j\ge 3$.

    If $r'$ is of length 1, then $r'=(1)$ or $(12)$, so we find that $\phi(r)=\phi(r'')E$. Meanwhile, if $r'$ is of length $j\ge 3$, then $\phi(r)=\phi(r'')\phi(r')=\phi(r'')N\phi(\psi_2(r'))S$ by Lemma~\ref{lemma:bijectionpreliminary}. This gives us a characterization which we will use to show that $\phi_n$ is injective.

    Suppose that for $r_1,r_2$, $p=\phi_n(r_1)=\phi_n(r_2)$. Then, either $p$ ends on $E$ or $S$. If $p$ ends on $E$, then by our characterization above, $r_1'=r_2'$ and $\phi(r_1'')E=\phi(r_2'')E$. Then, $\phi(r_1'')=\phi(r_2'')$. Since $r_1''$ and $r_2''$ have length $n-1 < n$, and we assume that $\phi_{n-1}$ is injective, we conclude that $r_1''=r_2''$. It then immediately follows that $r_1=r_2$.

    Next, suppose that $p$ ends on $S$. Then, it is easy to show that there exist some unique peakless Motzkin paths $p_1,p_2$ such that $p=p_1 N p_2 S$ by taking the last $N$ step at $y=0$ in $p$. By our characterization above, $p=\phi(r_1'')N\phi(\psi_2(r_1'))S=\phi(r_2'')N\phi(\psi_2(r_2'))S$. It follows that $p_1=\phi(r_1'')=\phi(r_2'')$ and likewise that $p_2=\phi(\psi_2(r_1'))=\phi(\psi_2(r_2'))$. Then, since $r_1''$, $r_2''$, $\psi_2(r_1')$ and $\psi_2(r_2')$ all have length $< n$, and for all $1\le m < n$, $\phi_m$ is bijective, it follows that $r_1''=r_2''$ and that $\psi_2(r_1')=\psi_2(r_2')$. Then, as $\psi_2$ is injective, $r_1'=r_2'$. Since $r_1'=r_2'$ and $r_1''=r_2''$, $r_1=r_2$ as desired.

    Thus, we have shown for arbitrary $r_1$ and $r_2$ that if $\phi_n(r_1)=\phi_n(r_2)$, then $r_1=r_2$. We conclude that $\phi_n$ is injective, so it is also a bijection.
\end{proof}

\subsection{Examples}
\begin{example}
    We give an example of a tableau with a linear form to illustrate our definition for the linear form. Take the tableau seen below. We see that the rightmost column has entries 1 and 2, so we denote it $(12)$. The second column from the right is likewise $(1)$. Continuing with this process, we find that our final linear form is $(12)(1)(13)(2)$.
    \begin{center}
        \begin{ytableau}
    \none & \none & \none & {\color{black} 1}\\
    \none & {\color{black} 1} & {\color{black} 1} & {\color{black} 2}\\
    {\color{black} 2} & {\color{black} 3} & \none & \none
    \end{ytableau}
    \end{center}
\end{example}

\begin{example}
    We demonstrate that $\psi_1((1)(12)(2))=(12)(1)(13)(2)(23)$. We outline the process with visualizations of each ribbon. Note that bonded entries are given the same color.
    
    First, we add a column $(12)$ to the right of $r$ in order to construct $r_0$. Then, we find that the first unbonded $(12)$ or $(2)$ after our initial column is the third column, so we change this column from $(12)$ to $(13)$. Now, this leaves our second column unbonded, and we find that there are no subsequent unbonded $(1)$ or $(13)$ columns, so we add $(23)$ to the end as the leftmost column. This produces the process below:
    
    \begin{center}
    \begin{ytableau}
    \none & {\color{blue} 1} & {\color{red} 1}\\
    {\color{red} 2} & {\color{blue} 2} & \none
    \end{ytableau} $\longrightarrow{}$ \begin{ytableau}
    \none & \none & \none & {\color{violet} 1}\\
    \none & {\color{blue} 1} & {\color{red} 1} & {\color{violet} 2}\\
    {\color{red} 2} & {\color{blue} 2} & \none & \none
    \end{ytableau} $\longrightarrow{}$ 
    \begin{ytableau}
    \none & \none & \none & {\color{violet} 1}\\
    \none & {\color{blue} 1} & {\color{red} 1} & {\color{violet} 2}\\
    {\color{blue} 2} & {\color{violet} 3} & \none & \none
    \end{ytableau} $\longrightarrow{}$ 
    \begin{ytableau}
    \none & \none & \none & \none & {\color{violet} 1}\\
    \none & \none & {\color{blue} 1} & {\color{red} 1} & {\color{violet} 2}\\
    {\color{red} 2} & {\color{blue} 2} & {\color{violet} 3} & \none & \none\\
    {\color{red} 3} & \none & \none & \none & \none\\
    \end{ytableau}
    \end{center}

    Note further that if we apply $\phi$ to each step in this process, we get $NES$, $ENES$, $NEES$, $NNESS$.
\end{example}

\begin{example}
    We demonstrate that $\phi((12)(1)(13)(2)(12)(2)(13)(3))=NNESNESS$. Consider the visualization of this ribbon below. Using the colors that illustrate bonds, it is easy to see which columns are bonded to others. Upon inspection, the desired result is obtained for $\phi$.

    \begin{center}
    \begin{ytableau}
    \none & \none & \none & \none & \none & \none & \none & {\color{violet} 1}\\
    \none & \none & \none & \none & \none & {\color{blue} 1} & {\color{red} 1} & {\color{violet} 2}\\
    \none & \none & \none & {\color{orange} 1} & {\color{blue} 2} & {\color{violet} 3} & \none & \none\\
    \none & 1 & {\color{red} 2} & {\color{orange} 2} & \none & \none & \none & \none\\
    {\color{orange}3} & {\color{red}3}\\
    \end{ytableau}
    \end{center}
\end{example}

\section{Conclusion and Future Work}\label{sec:future}
We have constructed a bijection between certain ballot-admissible Fibonacci ribbon tableaux, described in~\cite{tableauconjecturepaper} as the set of ballot-admissible $\text{Fib}(3,k)$ ribbons, and peakless Motzkin paths. However, the problem of enumerating ballot-admissible $\text{Fib}(n,k)$ ribbons for $n\ge 4$ remains an open problem. This paper has introduced a variety of techniques which may be helpful in order to establish a recursive enumeration of ballot-admissible $\text{Fib}(n,k)$ ribbons in the general case. In addition, it is of interest whether a bijection exists between $\text{Fib}(n,k)$ ribbons for $n\ge 4$ and other classes of Motzkin paths with forbidden patterns.

\section*{Acknowledgements}
This work was completed when the author was visiting Yale University for the SUMRY REU. The author would like to thank SUMRY, and particularly Professor Asamoah Nkwanta. The author would also like to thank Swarthmore College for funding this work through the Allen and Naomi Schneider Summer Research Fund. 

\bibliographystyle{IEEEtran}
\bibliography{motzkin_tableau_bijection}

\end{document}